\newtheorem{X}{X}[section]
\newtheorem{corollary}[X]{Corollary}
\newtheorem{lemma}[X]{Lemma}
\newtheorem{proposition}[X]{Proposition}
\newtheorem{theorem}[X]{Theorem}
\newcommand{\od}{\text{ odd}}
\theoremstyle{definition}
\newtheorem{remark}[X]{Remark}
\newcommand{\eps}{\varepsilon}
\newcommand{\Sp}{\mathrm{Sp}}
\newcommand{\Sodd}{S^*_{\text{odd}}}
\newcommand{\Sodde}{S^*_{\text{\emph{odd}}}}
\newcommand{\Sodda}{S_{\text{odd}}}
\newcommand{\Sevena}{S_{\text{even}}}
\newcommand{\Soddea}{S_{\text{\emph{odd}}}}
\newcommand{\wn}{w \left( \frac dX \right)}
\newcommand{\R}{\mathbb R}
\newcommand{\D}{\mathcal D}
\newcommand{\Dstar}{\mathcal D^*(\phi;X)}
\newcommand{\Da}{\mathcal D(\phi;X)}
\renewcommand{\Re}{\operatorname{Re}}
\newcommand{\sumta}{\underset{d\neq 0}{\sum \nolimits^{*}} \widetilde w\left( \frac dX \right) }
\newcommand{\sumn}{\underset{d \text{ odd}}{\sum \nolimits^{*}} w\left( \frac dX \right) }
\newcommand{\sumtstar}{\underset{d \text{ odd}}{\sum \nolimits^{*}} w\left( \frac dX \right) }
\newcommand{\sumtstarextra}{\underset{d \text{\emph{ odd}}}{\sum \nolimits^{*}} w\left( \frac dX \right) }
\newcommand{\supp}{\text{supp}\hspace{1pt}\widehat \phi}
\newcommand{\esupp}{\emph{\text{sup}}(\emph{\text{supp}}\hspace{1pt}\widehat \phi)}
\newcommand{\suppp}{\emph{\text{supp}}\hspace{1pt}\widehat \phi}
\numberwithin{equation}{section}
\title[Low-lying zeros of quadratic Dirichlet $L$-functions]{Low-lying zeros of quadratic Dirichlet $L$-functions: \\ lower order terms for extended support}
\author{Daniel Fiorilli, James Parks and Anders S\"odergren}
\address{D\'epartement de math\'ematiques et de statistique, Universit\'e d'Ottawa, \newline
\rule[0ex]{0ex}{0ex}\hspace{8pt} 585 King Edward, Ottawa, Ontario, K1N 6N5, Canada}
\email{daniel.fiorilli@uottawa.ca}
\address{Department of Mathematics and Computer Science, University of Lethbridge,\newline
	\rule[0ex]{0ex}{0ex}\hspace{8pt}  4401 University Drive, Lethbridge, AB, T1K 3M4, Canada\newline
	\rule[0ex]{0ex}{0ex}\hspace{8pt} \textit{Present address}: Institut f\"ur Algebra, Zahlentheorie und Diskrete Mathematik, \newline
	\rule[0ex]{0ex}{0ex}\hspace{8pt} Leibniz Universit\"at Hannover, Welfengarten 1, 30167 Hannover, Germany}
\email{parks@math.uni-hannover.de}
\address{Department of Mathematical Sciences, University of Copenhagen, Universitetsparken 5,\newline
\rule[0ex]{0ex}{0ex}\hspace{8pt} 2100 Copenhagen \O, Denmark}
\email{sodergren@math.ku.dk}
\date{\today}
\begin{document}

\maketitle

\begin{abstract}

We study the $1$-level density of low-lying zeros of Dirichlet $L$-functions attached to real primitive characters of conductor at most $X$. Under the Generalized Riemann Hypothesis, we give an asymptotic expansion of this quantity in descending powers of $\log X$, which is valid when the support of the Fourier transform of the corresponding even test function $\phi$ is contained in $(-2,2)$. We uncover a phase transition when the supremum $\sigma$ of the support of $\widehat \phi$ reaches $1$, both in the main term and in the lower order terms. A new lower order term appearing at $\sigma=1$ involves the quantity $\widehat \phi (1)$, and is analogous to a lower order term which was isolated by Rudnick in the function field case.
\end{abstract}

\section{Introduction}

The study of statistics of zeros of $L$-functions was initiated in Montgomery's seminal paper \cite{Mo} on the pair-correlation of zeros of $\zeta(s)$. This work inspired \"Ozl\"uk and Snyder \cite{OS1,OS2} to prove related results on the $1$-level density of low-lying zeros of Dirichlet $L$-functions attached to real characters 
$$\chi_d(n) := \left( \frac{d}{n} \right),$$
with $d\neq 0$.\footnote{Similar results were obtained independently in an unpublished preprint of Katz and Sarnak \cite{KS1}.} These low-lying zeros of Dirichlet $L$-functions are of particular interest since they have strong connections with important problems such as the size of class numbers of imaginary quadratic number fields and Chebyshev's bias for primes in arithmetic progressions. The aforementioned results were extended to the $n$-level density for general $n$ by Rubinstein \cite{Rub}, and for extended support under GRH by Gao \cite{Gt,G}. Note that Gao considered the family 
$$ \mathcal F^* (X):= \big\{ L(s,\chi_{8d}) : 1\leq |d| \leq X;  d \text{ is odd and squarefree} \big\}, $$
which is known to have significant technical advantages over that of all real characters (see also \cite{So}). For several years it was not known how to match Gao's asymptotic with the random matrix theory predictions. However, this was recently established for $n\leq 7$ by Levinson and Miller \cite{LM}, and for all $n$ by Entin, Roditty-Gershon and Rudnick \cite{ERR}. In addition, we mention that the Ratios Conjecture of Conrey, Farmer and Zirnbauer \cite{CFZ} has been shown by Conrey and Snaith \cite{CS} to predict a precise expression for the $1$-level density; this prediction was confirmed up to a power saving error term by Miller \cite{Mi} for a restricted class of test functions. 

In this paper we study the low-lying zeros of real Dirichlet $L$-functions in the family $\mathcal F^*(X)$. Our focus will be on lower order terms in the $1$-level density, a statistic for low-lying zeros that we now introduce in detail. Throughout, $\phi$ will denote a real and even Schwartz test function. Given a (large) positive number $X$, the $1$-level density for the single $L$-function $L(s,\chi_{d})$ is the sum
\begin{align*}
D_X(\chi_{d};\phi):=\sum_{\gamma_{d}} \phi\left( \gamma_{d}\frac{  L}{2\pi} \right),
\end{align*}
with $\gamma_{d}:= -i(\rho_{d}-\frac12)$, where $\rho_{d}$ runs over the nontrivial zeros of $L(s,\chi_{d})$ (i.e. zeros with $0<\Re(\rho_d) <1$). Moreover, we set 
\begin{align}\label{Ldefinition}
L:=\log\left(\frac{X}{2\pi e}\right).
\end{align}
We consider a cutoff function $w(t)$, which is an even, nonzero and nonnegative Schwartz function. The corresponding total weight is given by
\begin{align*}
W^*(X):=\sumtstar. 
\end{align*}
Here and throughout, a star on a sum will denote a restriction to squarefree integers. We then define the $1$-level density of the family $\mathcal F^*(X)$ as the sum 
\begin{equation} \label{equation definition one level densities}
\Dstar:= \frac 1{W^*(X)}\sumtstar D_X(\chi_{8d};\phi).
\end{equation}
Our main theorem is an asymptotic expansion of this quantity in descending powers of $\log X$, which is valid when $\text{supp}\hspace{1pt}\widehat \phi \subset (-2,2)$. This is a refinement of the results of \"Ozl\"uk-Snyder \cite{OS2} and Katz-Sarnak \cite{KS1}.

\begin{theorem}
\label{theorem main 8d}
Fix $K\in \mathbb N$, assume GRH and suppose that $\suppp \subset (-2,2)$. Then the $1$-level density of low-lying zeros in the family $\mathcal F^*$ of quadratic Dirichlet $L$-functions whose conductor is an odd squarefree multiple of $8$ is given by
$$\Dstar =  \widehat \phi(0)-\frac{1}2\int_{-1}^{1} \widehat \phi(u)\,du +\sum_{k=1}^K \frac {R_{w,k}(\phi)}{(\log X)^k} +O_{w,\phi,K}\left( \frac 1{(\log X)^{K+1}}\right), $$
where the coefficients $R_{w,k}(\phi)$ are linear functionals in $\phi$ that can be given explicitly in terms of $w$ and the derivatives of $\widehat\phi$ at the points $0$ and $1$. The first coefficient is given by
\begin{multline}
R_{w,1}(\phi)= \widehat \phi(0) \bigg( \log \Big(\frac{16}{ e^{\gamma+1}} \Big)    -2\sum_{p\geq 3} \frac{\log p}{p(p^2-1)}  - 2\int_{2}^{\infty} \frac{\theta(t)-t }{t^2}dt +   \frac 2{\widehat w(0)}\int_0^{\infty} w(x) (\log x)\,dx  \bigg)  \\+2\widehat \phi (1) \int_{1}^{\infty} \Big(  H_u h_1 ( u)  + \frac{[u]}uh_2( u)\Big)\,du,
\label{equation definition of first coefficient}
\end{multline}
where $\theta(t):=\sum_{p\leq t}\log p$ is the Chebyshev function, $H_u:=\sum_{n\leq u} n^{-1}$ is the $u$-th harmonic number and $h_1,h_2$ are explicit transforms of the weight function $w$ that are defined in Section \ref{section extended support}.
\end{theorem}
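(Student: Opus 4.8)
The plan is to combine the explicit formula with an analysis of the averaged character sums $S(n):=\frac1{W^*(X)}\sumtstar\chi_{8d}(n)$, treating the $n$ that are perfect squares separately from those that are not, and attacking the latter by Poisson summation in $d$; the phase transition — when the supremum $\sigma$ of $\supp$ passes $1$ — will come out of this last step. First I would apply the explicit formula to each $L(s,\chi_{8d})$: writing $c_d:=\log(8|d|/\pi)$ for the analytic conductor and rescaling from the natural normalization $c_d/(2\pi)$ to the fixed one $L/(2\pi)$, the prime sum becomes exactly $-\frac2L\sum_{n\ge 2}\frac{\Lambda(n)\chi_{8d}(n)}{\sqrt n}\widehat\phi\big(\tfrac{\log n}{L}\big)$ (the factors $c_d$ cancel), while the archimedean part has leading term $\frac{c_d}{L}\widehat\phi(0)$. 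Averaging over $d$ odd and squarefree with weight $w(d/X)$: the archimedean part yields $\widehat\phi(0)$, plus lower order terms fed by the discrepancy $c_d-L$ and the average $\frac1{W^*(X)}\sumtstar\log(d/X)\to\frac1{\widehat w(0)}\int_0^\infty w(x)\log x\,dx$ — this is the source of the weight integral and of the constant $\log 16$ in \eqref{equation definition of first coefficient} — and the prime sum reduces everything to $S(n)$.

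For $n=m^2$ a perfect square one has $\chi_{8d}(m^2)=\mathbf{1}[(2d,m)=1]$, so $S(m^2)$ equals the local density $\prod_{p\mid m}\tfrac p{p+1}$ for $m$ odd (and vanishes otherwise), up to a power-saving error from the equidistribution of odd squarefree $d$. This gives the diagonal contribution $-\frac2L\sum_{p\ge 3}\sum_{j\ge 1}\frac{\log p}{p^{j}}\cdot\frac p{p+1}\,\widehat\phi\big(\tfrac{2j\log p}{L}\big)$. Partial summation against the Chebyshev function $\theta$, together with Mertens-type estimates, shows that the $j=1$ terms give the main term $-\frac12\int_{\R}\widehat\phi$, which for $\sigma<1$ already equals $-\frac12\int_{-1}^1\widehat\phi$ (since $\supp\subset(-2,2)$); the $j\ge 2$ prime powers together with the defect between $\tfrac p{p+1}$ and $\tfrac1p$ combine into $-\tfrac{2\widehat\phi(0)}{\log X}\sum_{p\ge 3}\frac{\log p}{p(p^2-1)}$; and $-2\int_2^\infty\frac{\theta(t)-t}{t^2}\,dt$ appears as the natural constant in the partial summation. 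Expanding all of this, and the expansions of $W^*(X)$ and of $L$, in descending powers of $\log X$ produces exactly the parts of the $R_{w,k}(\phi)$ that are linear functionals of the derivatives of $\widehat\phi$ at $0$, accounting in particular for the four $\widehat\phi(0)$-terms in \eqref{equation definition of first coefficient}.

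For $n$ not a perfect square — the heart of the argument, carried out in Section~\ref{section extended support} — I would detect the squarefree condition via $\sum_{a^2\mid d}\mu(a)$, use quadratic reciprocity to turn $\chi_{8d}(n)$ into Jacobi symbols $\big(\tfrac{\cdot}{d}\big)$, and apply Poisson summation in $d$, producing a dual sum of the schematic shape $\frac X{\sqrt n}\sum_{k\in\Z}G_n(k)\,\widehat w\big(\tfrac{kX}{2n}\big)$ with $G_n(k)$ a Gauss-type sum attached to $n$. The frequency $k=0$ contributes nothing unless $n$ is a square, and $\widehat w$ effectively forces $|k|\ll n/X$; since the test function forces $n\ll X^\sigma$, for $\sigma<1$ only $k=0$ can occur and the non-square terms are $O(X^{-\delta})$. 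For $1\le\sigma<2$, however, the frequencies $1\le|k|\ll X^{\sigma-1}$ become active. Evaluating $G_n(k)$ — factoring $k=k_1k_2^2$ with $k_1$ squarefree and exploiting the multiplicative structure of $G_n$ in $n$, including the prime-power and non-squarefree cases — and summing against $\frac{\Lambda(n)}{\sqrt n}\widehat\phi\big(\tfrac{\log n}{L}\big)$, one finds that these $k\ne 0$ terms, effectively supported on $n\gtrsim X$, produce a secondary main term $\int_1^\sigma\widehat\phi(u)\,du$ that exactly cancels the excess $-\int_1^\sigma\widehat\phi(u)\,du$ of the diagonal main term beyond $u=1$: this is the phase transition in the main term. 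The subleading part of the analysis, governed by the boundary scale $n\asymp X$ and by the discrete sum over the frequencies $k$, contributes the new lower order term $2\widehat\phi(1)\int_1^\infty\big(H_u h_1(u)+\tfrac{[u]}{u}h_2(u)\big)\,du$, where $h_1,h_2$ are the transforms of $w$ dictated by the dual weight $\widehat w$ and where the harmonic numbers $H_u$ and floors $[u]$ record the counting of the frequencies $k$ at scale $n\asymp X^u$; Taylor-expanding $\widehat\phi$ around $u=1$ then produces the derivatives of $\widehat\phi$ at $1$ entering the higher $R_{w,k}$.

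Finally I would assemble the diagonal and off-diagonal contributions, reconcile the expansions of $W^*(X)$, of $L$ and of the archimedean term, and verify that all error terms — in particular those left after Poisson, which under GRH are controlled via the Dirichlet $L$-functions attached to the dual moduli — are $O_{w,\phi,K}\big((\log X)^{-K-1}\big)$ uniformly; this last requirement is precisely what confines the support to $(-2,2)$. The main obstacle is the off-diagonal analysis in the range $1\le\sigma<2$: one must evaluate $G_n(k)$ for \emph{all} $n$ with sufficient uniformity in $k$ up to $X^{\sigma-1}$ (with $\sigma$ possibly close to $2$), both to extract the secondary main term with the correct constant and to control the error, and then track the resulting expressions carefully enough to isolate the coefficient of $\widehat\phi(1)$ and to identify $h_1$ and $h_2$. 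By comparison, the diagonal computation and the passage to the full asymptotic expansion in $1/\log X$, though lengthy, amount to routine partial summation once the main terms are in hand.
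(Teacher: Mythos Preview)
Your overall architecture matches the paper: explicit formula, split into even and odd prime powers (your diagonal/off-diagonal), M\"obius sieve plus Poisson in $d$ for $\Sodd$, then GRH on the resulting prime sum to kill non-square dual frequencies. The diagonal expansion you sketch is Lemma~\ref{lemma Seven}, and the archimedean bookkeeping is right.

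The gap is at the step you correctly flag as the heart of the argument. After the first Poisson and after GRH collapses the dual variable to perfect squares $t=m^2$ (this is Lemma~\ref{lemma:small s}; your phrase ``evaluating $G_n(k)$ for all $n$'' overstates what is needed---only $n=p$ matters, and there $G_p(k)=\overline{\epsilon_p}\big(\tfrac{k}{p}\big)$, so the reduction to square $k$ is immediate from Lemma~\ref{lemma Riemann bound}), one is left with
$I_s(X)=\int_0^\infty\widehat\phi(u)\sum_{m\ge1}\widehat w\big(2m^2X^{1-u}(2\pi e)^u/s^2\big)\,du$
as in \eqref{equation definition I_S(X)}. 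For $u>1$ the inner sum has $\asymp sX^{(u-1)/2}$ nonnegligible terms, so $I_s(X)$ contains a piece that \emph{grows} like $X^{(\sigma-1)/2}$; your proposal does not explain how the bounded secondary main term $\int_1^\infty\widehat\phi$ and the $1/L$ term are extracted from it, nor why the growing piece is harmless. The paper's device---its stated novelty---is a \emph{second} Poisson summation applied to the $m$-sum (Lemma~\ref{lemma I_s(X)}, with $g(y)=\widehat w(4\pi ey^2)$). This cleanly isolates three pieces: a growing term $\tfrac{s\widehat g(0)}{2}\int_1^\infty(X/2\pi e)^{(u-1)/2}\widehat\phi(u)\,du$, which cancels identically in the combination $\tfrac12I_{2s}(X)-I_s(X)$ forced by the $8d$ family; the constant $-\tfrac{\widehat w(0)}{2}\int_1^\infty\widehat\phi$, which after summing over $s$ becomes the secondary main term; and rapidly decaying tail sums $\sum_{n\ge1}\widehat g(sne^{\tau/2})$ and $\sum_{m\ge1}g(me^{\tau/2}/s)$ that assemble into $J(X)$. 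Your reading of $H_u$ and $[u]$ as counting first-Poisson frequencies is accordingly off: $[u]$ counts the square-root frequencies $m$, but $H_u$ arises from the \emph{second} Poisson dual variable $n$ via the substitution $u=ne^{\tau/2}$ in the proof of Lemma~\ref{lemma expansion}. Without this second Poisson step the proposal does not yet produce either the phase-transition main term or the $\widehat\phi(1)$ coefficient.
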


Theorem \ref{theorem main 8d} will follow from the more precise Theorem \ref{theorem main 8d precise}, which gives an expression for $\Dstar$ with a power saving error term. 

We remark that Theorem \ref{theorem main 8d} agrees with the Katz-Sarnak prediction \cite{KS2, KS3}, which states that
$$ \lim_{X \rightarrow \infty} \mathcal D^* (\phi;X) = \widehat\phi(0)-\frac 12 \int_{-1}^1 \widehat \phi(u)\,du $$
independently of the support of $\widehat \phi$. This asymptotic, which was already obtained by \"Ozl\"uk-Snyder when $\supp\subset (-2,2)$ (under GRH), shows that there is a phase transition when the supremum of $\supp$ approaches $1$. Such a transition is also present in the lower order terms in Theorem \ref{theorem main 8d}, because of the terms involving $\widehat \phi^{(m)}(1)$. 

The Katz-Sarnak prediction originates from the following function field analogue of the family $\mathcal F^*(X)$. Consider the family $\mathcal H_{n,q}$ of zeta functions of hyperelliptic curves $y^2=Q(x)$ defined over $\mathbb F_q$, where $Q(x)$ is a monic square-free polynomial of degree $n$. Note the relations $n=2g+2$ if $n$ is even and $n=2g+1$ if $n$ is odd, where $g$ is the genus of the hyperelliptic curve. Using the fact that the monodromy corresponding to the family $\mathcal H_{n,q}$ equals the symplectic group $\Sp(2g)$ and an equidistribution theorem of Deligne, Katz and Sarnak proved precise results for the low-lying zeros of the zeta functions in $\mathcal H_{n,q}$ in the limit as both $q$ and $n$ tend to infinity (see \cite{KS2,KS3}).

The family $\mathcal H_{n,q}$ with $q$ fixed and $n=2g+1$ was also studied by Rudnick \cite{Rud}. He considered the associated $1$-level density in the limit as $n\rightarrow \infty$. Note that this limit is expected to be a more direct analogue to number fields than the $q\rightarrow \infty $ limit. Restricting to the case when $\supp\subset(-2,2)$, Rudnick gave the following estimate for the $1$-level density of low-lying zeros of the zeta functions in $\mathcal H_{n,q}$:
\begin{equation}\label{Rudnicks result}
\widehat\phi(0)-\frac 12\int_{-1}^1 \widehat \phi(u)du +\frac 1g \bigg( \widehat \phi(0) \bigg( \sum_{P \text{ monic irr.}} \frac{\text{deg }P}{q^{2\text{deg }P}-1}+\frac{1}2 \bigg)- \widehat \phi(1)\frac{q+1}{2(q-1)} \bigg)+o\left( \frac 1g \right)
\end{equation}
(cf.\ \cite[Corollary 3 and the subsequent paragraph]{Rud}; see also \cite{BCDGL,Ch}). Recall that when translating between function fields and number fields  it is customary to set $g=\log X$. Taking this into account, note the striking similarity between the expression in \eqref{Rudnicks result} and our Theorem \ref{theorem main 8d}; in particular, they both contain a lower order term involving $\widehat \phi(0)$ and $\widehat \phi(1)$. Here it is interesting that the prediction from the function field situation indicates not only the main term in the number field case (as in the Katz-Sarnak philosophy), but also lower order terms. 

In this connection, we note that a lower order term involving $\widehat \phi(1)$ is also present in the $1$-level density of the family of Dirichlet $L$-functions attached to all characters modulo $q$ (see \cite[Theorem 1.2]{FM}). However, in this family this term is of order $X^{-\frac 12}/\log X$ and is thus much smaller than in the family $\mathcal F^*$ of real characters.

Next we study the family of all real characters $\chi_d$ ordered by the modulus $|d|$, that is, we consider
$$ \mathcal F (X):= \big\{ L(s,\chi_{d}) : 1\leq |d| \leq X \big\}. $$
Note that $\zeta(s) \in \mathcal F (X)$, and that for any $a\in \mathbb N$, the functions $L(s,\chi_{d})$ and $L(s,\chi_{a^2d})$ have the same nontrivial zeros. The reason why we allow such repetitions is that it simplifies the analysis and allows one to obtain significantly sharper error terms\footnote{There are several known examples in the literature of families with repetitions having such advantages (cf.\ \cite{Y2,FM,FPS,SST}).} (compare the error terms in Theorems \ref{theorem main 8d precise} and \ref{theorem all d}). 

Similarly as above, we define the $1$-level density of the family $\mathcal F(X)$ to be the sum 
\begin{equation}\label{definition one level density all d}
\Da:= \frac 1{W(X)} \sum_{d \neq 0} \wn D_X(\chi_{d};\phi), 
\end{equation}
where
\begin{align*}
W(X):=\sum_{d\neq 0}\wn.
\end{align*}
Our second main theorem is an asymptotic formula for $\Da$ valid when $\text{supp}\hspace{1pt}\widehat \phi \subset (-2,2)$. For convenience we introduce the notation $\sigma:=\text{sup}(\supp)$.

\begin{theorem}
\label{theorem all d}
Fix $\epsilon>0$. Assume GRH and suppose that $\suppp \subset (-2,2)$. Let
\begin{align*}
U_1(X):=\frac 1{2\sqrt{2\pi e}\mathcal Mw(1)} \bigg( \mathcal Mw(\tfrac 12) - \frac{w(0)}{\sqrt X}\bigg)\int_{-1}^1 \left(\frac X{2\pi e}\right)^{\frac{u-1}2} \widehat \phi(u)\,du
\end{align*}
and 
\begin{multline*}
U_2(X):=\frac{\tfrac 12 \mathcal Mw(\tfrac 12)}{\sqrt{2\pi e}\mathcal Mw(1)} \int_0^1 \left(\frac X{2\pi e}\right)^{\frac{u-1}2} \widehat \phi(u)\,du\\-\frac 2{L\widehat w(0)}\int_{0}^{\infty} \bigg( \widehat \phi( 1+\tfrac {\tau}L  )   e^{\frac{\tau}2} \sum_{n \geq 1} \widehat h \big(  n e^{\frac{\tau}2}\big)  +\widehat \phi( 1-\tfrac {\tau}L  )  \sum_{n\geq 1} h\big(  n e^{\frac{\tau}2}\big)\bigg)\,d\tau,
\end{multline*}
where $\mathcal M w$ denotes the Mellin transform of $w$ and $h(t):=\widehat w(2\pi e t^2)$. Let further 
\begin{equation*}
U(X):=\begin{cases}
U_1(X) & \text{ if } \sigma < 1, \\
U_2(X) & \text{ if } 1 \leq \sigma < 2.
\end{cases}
\end{equation*}
Then the $1$-level density of low-lying zeros in the family $\mathcal F$ of all quadratic Dirichlet $L$-functions is given by
\begin{align} \Da &= \widehat \phi(0)+\int_{1}^{\infty} \widehat \phi(u)\,du +\frac {\widehat \phi(0)}{L} \left( \log\left(\frac{e^{1-\gamma}}{2^{\frac23}}\right) + \frac 2{\widehat w(0)}\int_0^{\infty} w(x) (\log x)\,dx + 2\frac{\zeta'(2)}{\zeta(2)} -\frac{\mathcal Mw(\tfrac 12)}{\mathcal M w(1)}\frac{\zeta(\tfrac12)}{X^{\frac 12}}\right) \nonumber\\
 & \hspace{.5cm}-\frac 2{L   }\sum_{j=1}^{\infty}\sum_{p} \frac{\log p}{p^{j}} \left(  1+\frac 1p\right)^{-1} \widehat \phi\left( \frac{2j \log p}{L} \right) \nonumber
+\frac{1}{L}\int_0^\infty\frac{e^{-x/2}+e^{-3x/2}}{1-e^{-2x}}\left(\widehat{\phi}(0)-\widehat{\phi}\left(\frac{x}{L}\right)\right) dx\\
& \hspace{1cm}+U(X)+O_{w,\phi,\eps}\big(X^{\eta(\sigma)+\eps}\big), \label{equation thm all d} 
\end{align}
where 
\begin{align*}
\eta(\sigma):=\begin{cases}
-\frac35 & \text{ if } \sigma < 1, \\
\frac{\sigma}2-1 & \text{ if } 1 \leq \sigma < 2.
\end{cases}
\end{align*}
\end{theorem}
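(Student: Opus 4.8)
The proof follows the same lines as that of Theorem~\ref{theorem main 8d}, the two differences being that the absence of a squarefree restriction eliminates the M\"obius sieve (which is what yields the sharper error term), and that a genuinely new feature appears: the extra main term $\int_1^\infty\widehat\phi(u)\,du$ together with its lower order companion $U(X)$, both exhibiting the phase transition at $\sigma=1$. First I would apply Weil's explicit formula to each $D_X(\chi_d;\phi)$. Since $L(s,\chi_d)$ and $L(s,\chi_{d^\flat})$ have the same nontrivial zeros, where $d^\flat$ denotes the fundamental discriminant of $\mathbb{Q}(\sqrt d)$, one has $D_X(\chi_d;\phi)=D_X(\chi_{d^\flat};\phi)$, and expanding the latter by means of the functional equation of the primitive character $\chi_{d^\flat}$ produces a conductor term $\widehat\phi(0)\log(|d^\flat|/\pi)/L$, an archimedean term coming from the $\Gamma$-factor (whose shape depends only on whether $\chi_{d^\flat}$ is even or odd, i.e.\ on $\operatorname{sgn}(d)$), and the prime sum $-\tfrac{2}{L}\sum_{n\geq 2}\Lambda(n)\chi_{d^\flat}(n)n^{-1/2}\widehat\phi(\tfrac{\log n}{L})$. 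The perfect squares $d$, for which $\chi_{d^\flat}$ is trivial and the relevant $L$-function is $\zeta$, contribute their weight times $D_X(\zeta;\phi)$, which is negligible since $\zeta$ has no low-lying zeros; if instead one prefers to keep these $d$ and work with the Kronecker symbol throughout, one must also carry the pole contribution of $\zeta$, which cancels against the corresponding piece of the prime sum.

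Next I would insert the expansion into $\Da = W(X)^{-1}\sum_{d\neq 0}w(d/X)D_X(\chi_{d^\flat};\phi)$ and interchange summations. Writing $d=k^2 m$ with $m$ squarefree, so that $\chi_{d^\flat}=\chi_{m^\flat}$, the inner sum over $k$ becomes $\sum_k w(k^2 m/X)$, which by Mellin inversion is governed by $\mathcal{M}w(s)\zeta(2s)$; the pole of $\zeta(2s)$ at $s=\tfrac12$ is the source of both the effective weighting of $\mathcal F$ over fundamental discriminants and of the factors $\mathcal Mw(\tfrac12)$, while the poles of $\mathcal Mw(s)$ produce the secondary $X^{-1/2}$-contributions. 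Summing the archimedean term (which does not depend on $d^\flat$) and using the classical integral representation of $\Gamma'/\Gamma$ — the even and odd $\Gamma$-factors yielding the exponentials $e^{-x/2}$ and $e^{-3x/2}$ respectively — gives the term $\tfrac1L\int_0^\infty\tfrac{e^{-x/2}+e^{-3x/2}}{1-e^{-2x}}(\widehat\phi(0)-\widehat\phi(x/L))\,dx$. Summing the conductor term against $W(X)^{-1}$ and using the arithmetic of fundamental discriminants yields the main term $\widehat\phi(0)$ together with the bracketed lower order term proportional to $\widehat\phi(0)/L$, in which $\zeta'(2)/\zeta(2)$ comes from the factor $1/\zeta(2s)$ entering the squarefree density, the $\mathcal Mw(\tfrac12)\zeta(\tfrac12)X^{-1/2}$ term from a secondary pole, and the $\tfrac23\log 2$ from the special role of the prime $2$.

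It then remains to treat the averaged prime sum $-\tfrac2L\sum_{n\geq 2}\Lambda(n)n^{-1/2}\widehat\phi(\tfrac{\log n}{L})\,W(X)^{-1}\sum_{d}w(d/X)\chi_{d^\flat}(n)$, split according to whether $n$ is a perfect square. For $n=\ell^2$ one has $\chi_{m^\flat}(\ell^2)=\mathbf 1_{(\ell,m^\flat)=1}$, and averaging this coprimality condition over fundamental discriminants — the multiplicity weight factoring out — yields the local densities $(1+1/p)^{-1}$ at odd $p$ and a separate factor at $p=2$, reproducing precisely $-\tfrac2L\sum_{j\geq 1}\sum_p\tfrac{\log p}{p^j}(1+\tfrac1p)^{-1}\widehat\phi(\tfrac{2j\log p}{L})$. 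For non-square $n$, $\chi_{m^\flat}(n)$ restricts to a non-principal real character, and one applies Poisson summation in the $d$-variable (equivalently, one separates $k$ by Mellin and Poisson-sums over residues modulo a divisor of $4n$), evaluating the resulting Gauss sums by quadratic reciprocity. The zero dual frequency then produces the extra main term $\int_1^\infty\widehat\phi(u)\,du$ (which vanishes when $\sigma<1$) together with $U(X)$: the closed forms of $U_1$ and $U_2$ emerge from tracking the weight through the Poisson transform — whence $h(t)=\widehat w(2\pi et^2)$ and the theta-type sums $\sum_n h(ne^{\tau/2})$, $\sum_n\widehat h(ne^{\tau/2})$, the two summands reflecting the even/odd split — through the $d=k^2m$ parametrization (whence again $\mathcal Mw(\tfrac12)$ and $\zeta(\tfrac12)$), and through the conductor normalization (whence the factors $2\pi e$ and $(X/2\pi e)^{(u-1)/2}$). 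The non-zero dual frequencies and the off-square terms are absorbed into the error $O_{w,\phi,\eps}(X^{\eta(\sigma)+\eps})$, after which one reassembles all the pieces.

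The main obstacle is the non-square analysis when $\sigma$ is close to $2$. There the error must be held at $X^{\sigma/2-1+\eps}$, which is barely a saving, and this forces essentially square-root cancellation in the relevant smoothed character sums $\sum_n\Lambda(n)\chi_{m^\flat}(n)n^{-1/2+it}$; it is exactly here that GRH is indispensable, since unconditionally the support cannot be pushed beyond $1$. One must also locate the phase transition at $\sigma=1$ precisely — it occurs where a pole coming from the $\zeta(s)$ in the squarefree density is crossed, so that both the new main term $\int_1^\infty\widehat\phi$ and the new lower order term $U_2(X)$ appear — and keep uniform control of the transforms $\mathcal Mw$, $h$ and $\widehat h$ throughout the Mellin and Poisson manipulations so that the stated expressions for $U_1(X)$ and $U_2(X)$ come out exactly.
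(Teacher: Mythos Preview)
Your outline has the right large-scale architecture (explicit formula, $d=k^2m$ regrouping via $\widetilde w$, Poisson in the $d$-variable, GRH for the off-diagonal), but the bookkeeping for the pole of $\zeta(s)$ and for the ``zero dual frequency'' is wrong, and this is exactly where the term $U(X)$ lives.

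First, the pole term is not negligible and does not cancel. In the paper the explicit formula (Lemma~\ref{lemma explicit formula all d}) keeps the contribution of $d^\flat=1$ as the explicit summand $\tfrac{2}{W(X)}\widetilde w(1/X)\,\phi(iL/4\pi)$. For $\sigma<1$ this term is evaluated directly (Lemma~\ref{lemma two terms cancel}) and \emph{equals} $U_1(X)+O_\eps(X^{\eps-1})$; meanwhile the entire odd prime sum $\Sodda$ is shown (Lemma~\ref{lemma main term for Sodd all d}) to be $O_\eps(X^{-3/5+\eps})$ and is absorbed into the error. So for $\sigma<1$ the term $U_1(X)$ comes \emph{only} from the pole of $\zeta$, not from any Poisson frequency. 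Your option~(a) (drop squares as negligible) introduces an error of size $X^{-1/2+o(1)}>X^{-3/5}$, and your option~(b) (pole cancels a piece of the prime sum) throws away $U_1(X)$ altogether.

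Second, for non-square $n$ the character $d\mapsto\chi_d(n)$ is genuinely nontrivial, so the zero frequency in Poisson over $d$ vanishes identically; it cannot produce $\int_1^\infty\widehat\phi$ or $U(X)$. What actually happens for $1\le\sigma<2$ is a two-step process. After the first Poisson, GRH isolates the \emph{square} dual frequencies $t=m^2$ (not $t=0$), yielding the integral in \eqref{equation estimate main term for Sodd all d}. A \emph{second} Poisson summation on that theta-type sum (Lemma~\ref{lemma new poisson}) then produces three pieces: the term $-\tfrac{\widehat w(0)}{2}\int_1^\infty\widehat\phi(u)\,du$ (this is the Katz--Sarnak main term), a large term $\tfrac{\widehat h(0)}{2}\int_1^\infty(X/2\pi e)^{(u-1)/2}\widehat\phi(u)\,du$, and the $h,\widehat h$ tails. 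The large term is what partially cancels the pole contribution (Lemma~\ref{lemma two terms cancel}, second case), and the residual $\int_0^1$ piece together with the $h,\widehat h$ tails is precisely $U_2(X)$. Your sketch misses this second Poisson step entirely and misattributes both $\int_1^\infty\widehat\phi$ and $U(X)$ to the wrong mechanism; in particular $\zeta(\tfrac12)$ does not enter $U(X)$ at all.
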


The term $U(X)$ in Theorem \ref{theorem all d} is $O(X^{\frac{\sigma-1}{2}})$ when $\text{supp}\hspace{1pt}\widehat \phi \subset (-1,1)$, but is of order $(\log X)^{-1}$ when $\widehat \phi$ has mass in a neighborhood of $1$ (see Lemma \ref{lemma expansion of U(X)}). Therefore, this term is responsible for
 a phase transition at $1$. Moreover, Lemma \ref{lemma Seven} shows that 
$$ - \frac 2{L   }\sum_{j=1}^{\infty}\sum_{p}\frac{\log p}{p^{j}} \left(  1+\frac 1p\right)^{-1} \widehat \phi\left( \frac{2j \log p}{L} \right) = -\frac {\phi (0)}2 + O_{\phi}\left( \frac1{\log X}\right),$$
and combining this with Lemma \ref{lemma expansion of U(X)} and taking $X\rightarrow \infty$ in \eqref{equation thm all d}, we recover the Katz-Sarnak prediction.

We now briefly describe the tools used in the proofs of Theorems \ref{theorem main 8d} and \ref{theorem all d}. The fundamental tool is an application of Poisson summation to the prime sum in the explicit formula, following the work of Katz and Sarnak \cite{KS1}. In contrast with our previous work \cite{FPS}, terms with square index in the resulting sum are now of considerable size and contribute to both the main term and new lower order terms in the $1$-level density. The novelty in the present work is to transform the terms of square index with an additional application of Poisson summation which isolates the Katz-Sarnak main term and other tractable terms that are estimated later in the analysis (see Lemma \ref{lemma I_s(X)}).

Finally, we observe that for small support we have an even more precise result for the family $\mathcal F$ (which is in fact unconditional). It is interesting to note that the error term we obtain in this case is significantly sharper than the error term predicted by the corresponding Ratios Conjecture (cf.\ \cite[Theorem 3.1]{CS}).

\begin{theorem}\label{theorem all d small support}
Fix $\epsilon>0$ and suppose that $\suppp \subset (-1,1)$. Then the $1$-level density of low-lying zeros in the family $\mathcal F$ of all quadratic Dirichlet $L$-functions is given by
\begin{align} \Da &= \frac {\widehat \phi(0)}{LW(X)}  \sumta\log|d|-\frac {\widehat \phi(0)}{L} \log\left(\pi e^{\gamma}2^{\frac{5}{3}}\right) \nonumber\\
 & -\frac 2{L   }\sum_{j=1}^{\infty}\sum_{p} \frac{\log p}{p^{j}} \left(  1+\frac 1p\right)^{-1} \widehat \phi\left( \frac{2j \log p}{L} \right) \nonumber
+\frac{1}{L}\int_0^\infty\frac{e^{-x/2}+e^{-3x/2}}{1-e^{-2x}}\left(\widehat{\phi}(0)-\widehat{\phi}\left(\frac{x}{L}\right)\right) dx\\
& +\frac 1{2\sqrt{2\pi e}\mathcal Mw(1)} \bigg( \mathcal Mw(\tfrac 12)   -  \frac{w(0)}{\sqrt X}\bigg)\int_{-1}^1 \left(\frac X{2\pi e}\right)^{\frac{u-1}2} \widehat \phi(u)\,du
 +O_{w,\phi,\eps}\big(X^{ \xi(\sigma)+\eps}\big),
\end{align}
where $\widetilde w (y):= \sum_{n\geq 1} w(n^2y)$ and
$$\xi(\sigma):= \begin{cases}
-1+\sigma & \text{ if } \frac 1{2m+1} \leq \sigma < \frac 1{2m+\frac 12}, \\
-\frac{4m-1}{4m+1} & \text{ if } \frac 1{2m+\frac 12} \leq \sigma < \frac 1{2m-1},
\end{cases} $$
for each $m\geq 1$. 
\end{theorem}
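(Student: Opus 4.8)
The plan is to run the explicit formula for each $L(s,\chi_d)$ in the family $\mathcal F$ and average, exactly as in the proof of Theorem \ref{theorem all d}, but to exploit the restriction $\suppp\subset(-1,1)$ to drop all contributions coming from the ``square index'' terms after the first Poisson summation, at which point GRH is no longer needed. Concretely, I would start from the standard identity
\begin{equation*}
D_X(\chi_d;\phi)=\widehat\phi(0)\frac{\log|d|}{L}+\widehat\phi(0)\cdot(\text{archimedean factor}) -\frac{2}{L}\sum_{n\geq 1}\frac{\Lambda(n)}{\sqrt n}\chi_d(n)\,\widehat\phi\Big(\frac{\log n}{L}\Big)+(\text{trivial-zero terms}),
\end{equation*}
weight by $w(d/X)$, and sum over all $d\neq0$. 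The first term produces $\frac{\widehat\phi(0)}{LW(X)}\sumta\log|d|$, and the trivial-zero/archimedean pieces give the explicit $\log(\pi e^\gamma 2^{5/3})$ constant together with the integral $\frac1L\int_0^\infty\frac{e^{-x/2}+e^{-3x/2}}{1-e^{-2x}}(\widehat\phi(0)-\widehat\phi(x/L))\,dx$; these computations are identical to those already done for Theorem \ref{theorem all d} and I would simply quote them. The nontrivial content is the prime sum $S(X):=-\frac2{LW(X)}\sum_{d\neq0}\wn\sum_{n\geq1}\frac{\Lambda(n)}{\sqrt n}\chi_d(n)\widehat\phi(\frac{\log n}{L})$.

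To handle $S(X)$ I would split $n$ according to whether it is a perfect square. For $n=m^2$ one has $\chi_d(m^2)=1$ unless $(d,m)>1$, and after removing the non-coprimality defect this contributes the Euler-product term $-\frac2L\sum_{j\geq1}\sum_p\frac{\log p}{p^j}(1+\frac1p)^{-1}\widehat\phi(\frac{2j\log p}{L})$ — again matching the shape appearing in Theorem \ref{theorem all d}, so I would invoke the same lemma (Lemma \ref{lemma Seven}) for its evaluation. For $n$ not a square, $\chi_d(n)$ is a nonprincipal real character in $d$, and I would apply Poisson summation to $\sum_d \wn \chi_d(n)$ over residue classes mod $4n$ (or mod $n$, with the $8$-divisibility handled as in the $\mathcal F^*$ case but here it is the full family). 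The dual sum has a main term coming from the frequency $k=0$, which is nonzero precisely because the Gauss-type sum $G_k(n)$ does not vanish at $k=0$ for $n$ a square times a small factor; tracking this carefully yields the term $\frac1{2\sqrt{2\pi e}\,\mathcal Mw(1)}(\mathcal Mw(\tfrac12)-\frac{w(0)}{\sqrt X})\int_{-1}^1(\frac{X}{2\pi e})^{(u-1)/2}\widehat\phi(u)\,du$, after a Mellin-transform computation of $\sum_d\wn$ analogous to the definition of $U_1(X)$.

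The main obstacle — and the place where the support restriction $\sigma<1$ is essential — is bounding the remaining dual frequencies $k\neq0$ in the Poisson-summed non-square terms. Here one has to estimate, for each $n\leq X^\sigma$, a sum of the form $\frac{X}{n}\sum_{k\neq0}\widehat w(kX/(4n))G_k(n)$ (with $G_k$ a normalized Gauss sum), and then sum over $n$ against $\Lambda(n)n^{-1/2}$. Since $\widehat w$ is Schwartz, the $k$-sum is negligible unless $n\gg X/(\text{something})$, i.e. essentially unless $n$ is close to $X^\sigma$; the bound then hinges on a Pólya–Vinogradov / large-sieve type estimate for $\sum_{n\leq N}\Lambda(n)n^{-1/2}G_k(n)$ in which the genuinely non-square $n$ oscillate. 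The precise exponent $\xi(\sigma)$ — with its two regimes $-1+\sigma$ on $[\frac1{2m+1},\frac1{2m+1/2})$ and $-\frac{4m-1}{4m+1}$ on $[\frac1{2m+1/2},\frac1{2m-1})$ — reflects optimizing between the trivial bound on short character sums and a Weyl-differencing estimate, where $m$ counts how many times one iterates a $d$-to-dual-$d$ Poisson step on the near-square part; I would organize this as a recursive estimate, at each stage either bounding trivially (giving the linear-in-$\sigma$ branch) or applying one more Poisson summation (giving the rational constant branch), and take the worse of the two at the breakpoints $\sigma=\frac1{2m+1/2}$. Finally I would collect the error terms, note that no term of size $X^{\eta}$ with $\eta\geq0$ survives, and observe that the resulting bound beats the Ratios-Conjecture prediction in this range, completing the proof.
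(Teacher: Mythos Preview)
Your proposal has two genuine gaps.

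First, you misidentify the source of the term
\[
\frac 1{2\sqrt{2\pi e}\,\mathcal Mw(1)} \bigg( \mathcal Mw(\tfrac 12)-\frac{w(0)}{\sqrt X}\bigg)\int_{-1}^1 \left(\frac X{2\pi e}\right)^{\frac{u-1}2} \widehat \phi(u)\,du.
\]
You claim it arises from the frequency $k=0$ in the Poisson summation applied to $\sum_d w(d/X)\chi_d(n)$ for non-square $n$. But for $n$ an odd prime (or any non-square prime power), the $k=0$ Gauss-type sum is a complete sum of a nonprincipal character and therefore \emph{vanishes}. In the paper this term has nothing to do with the prime sum: the family $\mathcal F$ contains $\zeta(s)$ itself (the case $d=\square$), and the pole of $\zeta(s)$ at $s=1$ produces an extra term $\frac{2}{W(X)}\widetilde w(1/X)\,\phi(iL/4\pi)$ in the explicit formula (Lemma~\ref{lemma explicit formula all d}). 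Lemma~\ref{lemma two terms cancel} then evaluates this pole contribution and obtains exactly the displayed expression. Your explicit-formula sketch omits this pole term entirely, so as written the proposal neither produces the correct term nor accounts for it.

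Second, your mechanism for the piecewise exponent $\xi(\sigma)$ is not the one that actually works here. You propose Weyl differencing and a recursive ``$d$-to-dual-$d$'' Poisson iteration, with $m$ counting the number of iterations. The paper (via Lemma~\ref{lemma main term for Sodd all d}, which in turn invokes \cite[Proposition~3.6]{FPS}) instead exploits the special structure of the \emph{full} family $\mathcal F$: since one sums over all $d$ rather than squarefree $d$, one has the exact unfolding
\[
\sumta \Big(\frac{d}{p}\Big)=\sum_{k\geq 0}\ \sum_{d\neq 0} w\!\left(\frac{d}{X/p^{2k}}\right)\Big(\frac{d}{p}\Big),
\]
which decomposes the problem into a sequence of scales $X/p^{2k}$. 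A \emph{single} Poisson summation at each scale, combined with the rapid decay of $\widehat w$, suffices; the integer $m$ in $\xi(\sigma)$ records which scale $k$ dominates when $\sigma\in[\frac1{2m+1},\frac1{2m-1})$, and the two branches of $\xi(\sigma)$ come from balancing the trivial bound against the Poisson bound at that scale. No large sieve or Weyl differencing enters, and an approach based on those tools would be unlikely to recover the precise rational exponents $-\frac{4m-1}{4m+1}$.
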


\begin{remark}
We can also obtain an unconditional result similar to Theorem \ref{theorem all d small support} for $\Dstar$. However, in this case the error term will be the weaker  $O_{\eps}(X^{\frac{\sigma-1}2+\eps})$ and hence we choose not to provide the details in the present paper. (Note that under GRH, Proposition \ref{proposition GRH small support} gives the sharper error term $O_{\eps}\big(X^{\max\{\frac{\sigma}4-\frac12,\frac{3\sigma}4-\frac34\}+\eps}\big)$). This setting was previously studied by Miller \cite[Theorem 1.2]{Mi}, who claimed an error term of size 
$$O_{\eps}\big(X^{-\frac12}+X^{-(1-\frac32\sigma)+\eps}+X^{-\frac34(1-\sigma)+\eps}\big).$$ 
However, going through the proof of \cite[Lemma 3.5]{Mi}, we find\footnote{The second, third and fourth terms on the right-hand side of \cite[(3.39)]{Mi} should be multiplied by $X$ (see \cite[(3.36)]{Mi}).} that the actual error term resulting from \cite{Mi} is
 $O_{\eps}(X^{\frac{\sigma-1}2+\eps}).$ 
\end{remark}

\section*{Acknowledgments}

We are grateful to Ze\'ev Rudnick for pointing our attention to his paper \cite{Rud} and for his questions which motivated the present work. A substantial part of this work was accomplished while the authors were visiting the University of Copenhagen, the Banff International Research Station and the Centre de Recherches Math\'ematiques. 
We are very thankful to these institutions for providing us with excellent research conditions. The first author was supported at the University of Ottawa by an NSERC discovery grant and at the Institut Math\'ematique de Jussieu by a postdoctoral fellowship from the Fondation Sciences Math\'ematiques de Paris. The second author was supported at the University of Lethbridge by a PIMS Postdoctoral Fellowship. The third author was supported by a grant from the Danish Council for Independent Research
and FP7 Marie Curie Actions-COFUND (grant id: DFF-1325-00058).

\section{Preliminary results for the family $\mathcal F^*(X)$}

\subsection{Explicit formula and character sums}

We will study the $1$-level density via the explicit formula for primitive Dirichlet $L$-functions.

\begin{lemma}[Explicit Formula]
\label{lemma explicit formula}
Assume that $\phi$ is an even Schwartz test function whose Fourier transform has compact support. Then the $1$-level density defined in \eqref{equation definition one level densities} is given by the formula
\begin{align} 
\Dstar =  &\frac {\widehat \phi(0)}{ LW^*(X)}\sumtstarextra\log |d| - \frac {\widehat\phi(0)}{L} (\gamma+\log \pi) \nonumber\\
-&\frac 2{L  W^*(X)  }\sum_{p,m} \frac{\log p}{p^{m/2}} \widehat \phi\left( \frac{m \log p}{L} \right) \sumtstarextra \left( \frac{8d}{p^m}\right) \nonumber\\ +&\frac{1}{L}\int_0^\infty\frac{e^{-x/2}+e^{-3x/2}}{1-e^{-2x}}\left(\widehat{\phi}(0)-\widehat{\phi}\left(\frac{x}{L}\right)\right) dx.
\label{equation one level density explicit formula squarefree}
\end{align}
\end{lemma}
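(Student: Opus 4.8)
The plan is to derive the explicit formula for a single primitive Dirichlet $L$-function, then average over the family $\mathcal F^*(X)$ with the weight $w(d/X)$. First I would recall the standard explicit formula (Weil's): for $\chi$ a primitive character mod $q$, and a test function $\phi$ with $\widehat\phi$ compactly supported,
\begin{align*}
\sum_{\gamma_\chi} \phi\!\left(\gamma_\chi \frac{L}{2\pi}\right) = \frac{\widehat\phi(0)}{L}\log\frac{q}{\pi} - \frac{2}{L}\sum_{n\geq 2}\frac{\Lambda(n)}{\sqrt n}\,\widehat\phi\!\left(\frac{\log n}{L}\right)\chi(n) + \frac{1}{2\pi}\int_{-\infty}^{\infty}\phi\!\left(\frac{tL}{2\pi}\right)\frac{\Gamma'}{\Gamma}\!\left(\frac{\frac14+\mathfrak a/2+it/2}{1}\right)dt + \cdots,
\end{align*}
where $\mathfrak a\in\{0,1\}$ according to the parity of $\chi$. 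For the characters $\chi_{8d}$ in our family, $q=8|d|$ (for $d$ odd and squarefree), so $\log q = \log 8 + \log|d|$; absorbing $\log 8$ appropriately will actually be done when combining with the arithmetic-factor analysis later, so at this stage I would just write $\log|d|$ plus the constant $\gamma + \log\pi$ coming from the $\log\pi$ term together with the archimedean integral's leading contribution. The precise shape of the archimedean term — the integral $\frac1L\int_0^\infty \frac{e^{-x/2}+e^{-3x/2}}{1-e^{-2x}}\bigl(\widehat\phi(0)-\widehat\phi(x/L)\bigr)\,dx$ — comes from expanding $\frac{\Gamma'}{\Gamma}$ via its series $\frac{\Gamma'}{\Gamma}(s) = -\gamma + \sum_{n\geq 0}\bigl(\frac1{n+1}-\frac1{n+s}\bigr)$, writing the digamma integral in terms of $\widehat\phi$ by Fourier inversion, and carefully separating the $\gamma$-term (which pairs with $\log\pi$) from the convergent remainder; one handles the even and odd characters together, which produces the two exponentials $e^{-x/2}$ and $e^{-3x/2}$ in the numerator corresponding to the gamma factors $\Gamma(s/2)$ and $\Gamma((s+1)/2)$ respectively.

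Next I would substitute $n = p^m$ in the von Mangoldt sum, so $\Lambda(p^m) = \log p$ and the prime sum becomes $-\frac2L\sum_{p,m}\frac{\log p}{p^{m/2}}\widehat\phi\!\bigl(\frac{m\log p}{L}\bigr)\chi_{8d}(p^m)$, and note that $\chi_{8d}(p^m) = \bigl(\frac{8d}{p^m}\bigr)$ by definition of the Jacobi/Kronecker symbol. Then I would divide the single-$L$-function formula by $W^*(X)$ and apply $\sumtstarextra$ to every term: the first term is linear in $\log|d|$ and becomes $\frac{\widehat\phi(0)}{LW^*(X)}\sumtstarextra\log|d|$; the constant term $-\frac{\widehat\phi(0)}{L}(\gamma+\log\pi)$ is unchanged since $\frac1{W^*(X)}\sumtstarextra 1 = 1$; the prime sum becomes the double sum over $p,m$ of $\frac{\log p}{p^{m/2}}\widehat\phi(\cdots)$ times the \emph{character sum} $\sumtstarextra\bigl(\frac{8d}{p^m}\bigr)$, divided by $LW^*(X)$ with the factor $-2$; and the archimedean term, being independent of $d$, again passes through the averaging unchanged. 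This is essentially bookkeeping once the single-function explicit formula is in hand.

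The one genuine technical point — and the place where I would be most careful — is the interchange of the sum over $d$ (inside $\sumtstarextra$, which is an infinite weighted sum over squarefree odd $d$) with the sum over primes $p$ and powers $m$ in the explicit formula. Since $\widehat\phi$ has compact support, say contained in $(-\nu,\nu)$, only terms with $m\log p < \nu L$, i.e.\ $p^m \ll X^{\nu}$, contribute, so the prime sum is in fact \emph{finite} for each fixed $X$; and the $d$-sum converges absolutely because $w$ is Schwartz (so $\sum_d |w(d/X)|\,|\log|d|| < \infty$ and likewise with the trivial bound $|\chi_{8d}(p^m)|\leq 1$). Hence Fubini applies with no difficulty, and the rearrangement producing \eqref{equation one level density explicit formula squarefree} is justified. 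I would also remark that the zeros $\gamma_d$ are real precisely under GRH, but that the explicit formula itself holds unconditionally as an identity (the sum over $\gamma_d$ being over all nontrivial zeros, counted with their imaginary parts); GRH is only needed later for the error-term analysis, not for Lemma~\ref{lemma explicit formula}. This completes the proof sketch: it is the classical Weil explicit formula combined with term-by-term averaging, the only subtlety being the (easy) justification of interchanging summations.
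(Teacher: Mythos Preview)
Your approach is essentially the paper's: apply the single-$L$-function explicit formula (the paper invokes \cite[Theorem~12.13]{MV} directly) and then average over $d$ with weight $w(d/X)$, using the evenness of $w$ to combine the $\mathfrak a=0$ and $\mathfrak a=1$ archimedean contributions into the single integral with numerator $e^{-x/2}+e^{-3x/2}$.

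One correction, though: the $\log 8$ from the conductor $8|d|$ is \emph{not} deferred to later analysis --- it must cancel within this very lemma, since the stated formula \eqref{equation one level density explicit formula squarefree} contains only $\log|d|$ and the constant $-(\gamma+\log\pi)$. The cancellation happens against the constant part of the digamma term: because $w$ is even, positive and negative $d$ receive equal total weight, and averaging gives
\[
\tfrac12\Bigl(\tfrac{\Gamma'}{\Gamma}(\tfrac14)+\tfrac{\Gamma'}{\Gamma}(\tfrac34)\Bigr)=-\gamma-3\log 2,
\]
so that $\log 8 - 3\log 2 = 0$ and only $-\gamma-\log\pi$ survives. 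If you defer the $\log 8$ as you propose, you will not arrive at the formula as stated.
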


\begin{proof}
Let $d$ be an odd squarefree integer. Then $\chi_{8d}$ is a primitive character of conductor $8|d|$. Taking $\widehat{F}(t)=\Phi\left(\frac12+it\right):=\phi\left(\frac{tL}{2\pi}\right)$ in  \cite[Theorem 12.13]{MV} (whose conditions are satisfied by our restrictions on $\phi$), we obtain the formula
\begin{multline*} D_X(\chi_{8d};\phi)= \frac{\widehat \phi(0)}L \left(\log \bigg(\frac{8|d|}{\pi}\bigg)+ \frac{\Gamma'}{\Gamma}\left(\frac{1}{4}+\frac{\mathfrak a}{2}\right)\right)-\frac 2L\sum_{p,m} \frac{\chi_{8d} (p^m) \log p}{p^{m/2}} \widehat \phi\left( \frac{m \log p}{L} \right) \\
 +\frac 2L \int_0^\infty\frac{e^{-(1/2+\mathfrak a)x}}{1-e^{-2x}}\left(\widehat{\phi}(0)-\widehat{\phi}\left(\frac{x}{L}\right)\right) dx,
\end{multline*}
where
$$ \mathfrak a := \begin{cases}
0& \text{ if } d\geq 0,\\
1 & \text{ if } d < 0.
\end{cases}$$
 Formula \eqref{equation one level density explicit formula squarefree} then follows by summing over $d$ against the weight function $w$. 
\end{proof}

We will need the following estimate on a weighted quadratic character sum.

\begin{lemma}
\label{lemma count of squarefree}
Fix $n\in \mathbb N$ and $\varepsilon>0$. Under the Riemann Hypothesis (RH), we have the estimate
\begin{equation*}
\sumtstarextra \left(\frac {8d}n\right)= \kappa(n) \frac{2X}{3\zeta(2)} \widehat w(0)   \prod_{p\mid n}  \left(1+\frac 1{p} \right)^{-1}+O_{\varepsilon,w}\big(|n|^{\frac 38(1-\kappa(n))+\varepsilon}X^{\frac 14+\varepsilon}\big),
\end{equation*}
where
$$ \kappa(n) := \begin{cases}
1 &\text{ if } n \text{ is an odd square}, \\
0 &\text{ otherwise.}
\end{cases}$$
\end{lemma}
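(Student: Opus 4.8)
The plan is to reduce the weighted character sum to a standard mean value estimate for $L$-functions via a squarefree sieve, and then apply the convexity bound under RH. First I would write $\left(\frac{8d}{n}\right)=\left(\frac 8n\right)\left(\frac dn\right)$ for odd $d$, and detect the squarefree condition on $d$ using the Möbius function: $\sum_{d \text{ odd}}^* = \sum_{d \text{ odd}} \sum_{a^2 \mid d} \mu(a)$ (with $a$ necessarily odd). This splits the sum as a double sum over $a$ odd and $d=a^2 e$, giving $\sum_{a \text{ odd}} \mu(a) \left(\frac 8{a^2}\right)\left(\frac 8n\right) \sum_{e} \left(\frac{a^2 e}{n}\right) w\!\left(\frac{a^2 e}{X}\right)$. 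When $(a,n)=1$ the inner sum is essentially $\sum_e \left(\frac en\right) w\!\left(\frac{a^2 e}{X}\right)$, which we evaluate by Mellin inversion: $\sum_e \left(\frac en\right) w\!\left(\frac{a^2 e}{X}\right) = \frac{1}{2\pi i}\int_{(c)} \left(\frac{X}{a^2}\right)^s \mathcal M w(s)\, L(s,\chi_n)\,ds$, where $\chi_n$ is the character $e\mapsto\left(\frac en\right)$ and the factor $\left(\frac 8n\right)$ accounts for the $8$.

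The main term arises from a pole of the integrand: $L(s,\chi_n)$ has a pole at $s=1$ precisely when $\chi_n$ is principal, i.e.\ when $n$ is an odd square (this is where $\kappa(n)$ enters), with residue essentially $\prod_{p\mid n}(1-p^{-1})$ up to the contribution of the places $2$ and those dividing $n$. Moving the contour to $\Re(s)=\tfrac14+\varepsilon$ and picking up this residue, summing the main term over $a$ odd with $(a,2n)=1$ produces the factor $\frac{2X}{3\zeta(2)}\widehat w(0)\prod_{p\mid n}(1+\tfrac1p)^{-1}$ after combining the Euler product over $a$ (the $\tfrac23/\zeta(2)$ comes from restricting the $\zeta$-type factor to odd integers), keeping in mind $\widehat w(0)=\mathcal Mw(1)$ up to normalization. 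I would handle the terms with $(a,n)>1$ and the archimedean place $2$ separately; these are lower order and absorbed into the error term, since they only involve $a$ in a thin set or short sums.

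For the error term, on the line $\Re(s)=\tfrac14+\varepsilon$ the rapid decay of $\mathcal M w(s)$ in vertical strips (Schwartz) lets us bound the contour integral by $\ll (X/a^2)^{1/4+\varepsilon}\sup_t |L(\tfrac14+\varepsilon+it,\chi_n)|(1+|t|)^{-A}$. Under RH, the convexity/Lindelöf-on-average estimate — or simply the subconvexity consequence of RH — gives $L(\tfrac14+\varepsilon+it,\chi_n)\ll_\varepsilon (|n|(1+|t|))^{\varepsilon}\cdot |n|^{3/8}$ roughly (the exact exponent $\tfrac38$ is the convexity exponent at $\Re(s)=\tfrac14$ for conductor $|n|$, namely $\tfrac12(1-\tfrac14)\cdot 2=\tfrac38$, after accounting for the modulus being at most $|n|$ and that $\chi_n$ need not be primitive, so we bound in terms of the primitive character inducing it). When $\kappa(n)=1$, i.e.\ $n$ is an odd square, $\chi_n$ is principal and this $L$-function is just $\zeta(s)$ times finitely many Euler factors, so the corresponding term in the exponent vanishes — hence the factor $|n|^{\frac38(1-\kappa(n))}$. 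Summing the resulting $(X/a^2)^{1/4+\varepsilon}|n|^{3/8(1-\kappa(n))+\varepsilon}$ over $a\ge 1$ converges (the $a^{-1/2-2\varepsilon}$ tail), yielding the claimed $O_{\varepsilon,w}(|n|^{\frac38(1-\kappa(n))+\varepsilon}X^{\frac14+\varepsilon})$.

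\textbf{Main obstacle.} The delicate point is \emph{uniformity in $n$}: one must track how the bound for $L(s,\chi_n)$ on $\Re(s)=\tfrac14+\varepsilon$ depends on $n$ when $\chi_n$ is imprimitive (which happens whenever $n$ has a square factor), and make sure the passage to the inducing primitive character only loses $|n|^\varepsilon$. One must also check carefully that the pole extraction is valid uniformly — that there are no other poles, and that the $a$-sum of both main and error terms converges with the stated dependence on $n$ — and correctly bookkeep the local factors at $2$ and at primes dividing $n$ so that the constant $\frac{2}{3\zeta(2)}$ and the product $\prod_{p\mid n}(1+\tfrac1p)^{-1}$ come out exactly right. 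The arithmetic of these local factors, rather than the analytic input, is where the computation is most error-prone.
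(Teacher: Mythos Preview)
Your approach is on the right track---Mellin inversion followed by a contour shift to $\Re(s)=\tfrac14+\varepsilon$, picking up the pole at $s=1$ when $\kappa(n)=1$, and invoking the convexity bound $L(\tfrac14+it,\chi)\ll (q(1+|t|))^{3/8+\varepsilon}$ for the $|n|^{3/8}$---but there is a genuine gap in the final step, and it is exactly the step where RH should enter (and in your write-up it never does).

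The problem is the claim that $\sum_{a\ge 1}(X/a^2)^{1/4+\varepsilon}$ converges. It does not: the summand is $X^{1/4+\varepsilon}a^{-1/2-2\varepsilon}$, and $\sum_a a^{-1/2-2\varepsilon}$ diverges for every $\varepsilon<\tfrac14$. If you truncate at $a\le X^{1/2}$ (which is the effective range, since $w$ is Schwartz) you get $\sum_{a\le X^{1/2}}a^{-1/2}\asymp X^{1/4}$, and the error term becomes $X^{1/2+\varepsilon}|n|^{3/8+\varepsilon}$, which is far too large. Balancing against the trivial bound $X/a^2$ for large $a$ does not help: you still end up at $X^{1/2}$.

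The fix---and this is what the argument in \cite{FPS} does---is to \emph{not} pull the M\"obius sum outside the contour integral. Instead, write the generating Dirichlet series directly:
\[
\sum_{\substack{d\ge 1\\ d\text{ odd}}}\mu^2(d)\chi(d)d^{-s}
=\prod_{p>2}\big(1+\chi(p)p^{-s}\big)
=\frac{L(s,\chi)}{\zeta(2s)}\cdot E_n(s),
\]
where $E_n(s)$ is a finite Euler product at $2$ and the primes dividing $n$, bounded on $\Re(s)\ge\tfrac14$ by $O_\varepsilon(|n|^\varepsilon)$. Now shift the single contour integral to $\Re(s)=\tfrac14+\varepsilon$. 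The factor $1/\zeta(2s)$ is what absorbs the divergent $a$-sum, and controlling it on that line is precisely where RH is used: under RH one has $1/\zeta(\tfrac12+2\varepsilon+it)\ll_\varepsilon(1+|t|)^\varepsilon$, whereas unconditionally the zeros of $\zeta(2s)$ would block the shift before reaching $\Re(s)=\tfrac14+\varepsilon$. Combined with convexity for $L(s,\chi)$ and the rapid decay of $\mathcal Mw$, this yields $O_\varepsilon(X^{1/4+\varepsilon}|n|^{3/8(1-\kappa(n))+\varepsilon})$ in one stroke; the local bookkeeping at $2$ and at $p\mid n$ then produces the constant $\tfrac{2}{3\zeta(2)}\prod_{p\mid n}(1+p^{-1})^{-1}$ from the residue at $s=1$.

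So the ``main obstacle'' you flagged (uniformity in $n$, local factors) is in fact routine; the real obstacle is getting the $a$-sum under control, and the missing idea is to keep it inside the integral as $1/\zeta(2s)$ and invoke RH there.
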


\begin{proof}
The result follows similarly as in \cite[Lemma 2.10]{FPS}.
\end{proof}

\begin{remark}
\label{remark total weight *}
Taking $n=1$ in Lemma \ref{lemma count of squarefree} gives the following conditional estimate for the total weight:
\begin{equation}
W^*(X)= \frac{2X}{3\zeta(2)} \widehat w(0)   +O_{\varepsilon,w}\big(X^{\frac 14+\varepsilon}\big).
\end{equation}
\end{remark}

Let us now evaluate the first sum on the right-hand side of \eqref{equation one level density explicit formula squarefree}.

\begin{lemma} \label{lemma logd}
Fix $\varepsilon>0$, and assume the Riemann Hypothesis (RH). We have the estimate
\begin{equation*} 
\frac 1{W^*(X)} \sumtstarextra \log |d|= \log X + \frac 2{\widehat w(0)}\int_0^{\infty} w(x) (\log x) \,dx   +O_{\varepsilon,w}\big(X^{-3/4 +\varepsilon}\big).  
\end{equation*}
\end{lemma}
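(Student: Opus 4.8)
The plan is to evaluate the weighted sum $\sum_{d\text{ odd}}^* w(d/X)\log|d|$ by writing $\log|d|$ as a contour integral of $-\zeta'/\zeta$-type objects restricted to odd squarefree moduli, or more directly by differentiation of a Dirichlet-series-twisted count. Concretely, I would start from the identity
\[
\sumtstarextra \log|d| = \frac{\partial}{\partial s}\bigg|_{s=0}\Big( \underset{d\text{ odd}}{\sum\nolimits^*} w\!\left(\tfrac dX\right)|d|^{s}\Big),
\]
and then evaluate the $s$-deformed sum on the right by the same Mellin/contour method that produced Lemma \ref{lemma count of squarefree} (with the trivial character $n=1$), now carrying the extra factor $|d|^s$. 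This leads to a Mellin transform $\int_0^\infty w(x)x^{z+s-1}\,dx$ against a Dirichlet series $\sum_{d\text{ odd squarefree}} |d|^{-z}$, which up to Euler factors at $2$ is $\zeta(z)/\zeta(2z)$ times $\prod_{p}(1+p^{-z})^{-1}$-type corrections; its only pole in the relevant region is the simple pole of $\zeta(z)$ at $z=1$.

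Next I would shift the contour past $\Re z = 1$, picking up the residue there, and bound the remaining integral. The residue at $z=1$ produces the main term: the leading part is $\widehat w(0)\cdot(\text{const})\cdot X^{1+s}\cdot(\text{something})$, and differentiating in $s$ at $s=0$ turns the $X^{1+s}$ into $X(\log X + \text{const})$, while the $s$-derivative of the Mellin factor $\int_0^\infty w(x)x^{s}\,dx$ at $s=0$ contributes the term $\frac{2}{\widehat w(0)}\int_0^\infty w(x)(\log x)\,dx$ after dividing by $W^*(X)$. Dividing through by $W^*(X) = \frac{2X}{3\zeta(2)}\widehat w(0) + O(X^{1/4+\varepsilon})$ (Remark \ref{remark total weight *}) and expanding the quotient then yields $\log X + \frac{2}{\widehat w(0)}\int_0^\infty w(x)(\log x)\,dx$ as claimed; the constant Euler-product pieces at $z=1$ must cancel, which is a consistency check worth doing carefully.

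For the error term, under RH one pushes the $z$-contour to $\Re z = \tfrac12 + \varepsilon$, using the standard RH bound $\zeta(z)^{-1}\ll_\varepsilon |z|^{\varepsilon}$ and $\zeta(z)\ll_\varepsilon |z|^{\varepsilon}$ in that half-plane together with the rapid decay of the Mellin transform of the Schwartz function $w$ in the imaginary direction; this gives a contribution of size $O_{\varepsilon,w}(X^{1/2+\varepsilon})$ \emph{before} dividing by $W^*(X)\asymp X$, i.e. $O_{\varepsilon,w}(X^{-1/2+\varepsilon})$. The sharper exponent $-3/4+\varepsilon$ in the statement presumably comes from the extra saving in Lemma \ref{lemma count of squarefree}, whose error is $O(X^{1/4+\varepsilon})$ rather than $O(X^{1/2+\varepsilon})$ — so instead of reproving the contour shift I would cite Lemma \ref{lemma count of squarefree} directly, applying it with a dyadic/Mellin decomposition of $\log|d|$ (or Abel summation against the estimate $\sum_{d\text{ odd},|d|\le T}^* 1 = \text{main} + O(T^{1/4+\varepsilon})$ obtained from the $n=1$ case of that lemma) so that the error propagates as $X^{1/4+\varepsilon}$, and after division by $W^*(X)$ this becomes $O_{\varepsilon,w}(X^{-3/4+\varepsilon})$.

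The main obstacle I anticipate is bookkeeping rather than conceptual: isolating the $\log X$ versus the constant $\frac{2}{\widehat w(0)}\int_0^\infty w(x)(\log x)\,dx$ cleanly requires tracking how $X^{1+s}$ and the Mellin factor $\int_0^\infty w(x)x^{z+s-1}dx$ both depend on $s$, differentiating the product at $s=0$, and then verifying that all the Euler-factor constants coming from the squarefree/oddness conditions at $z=1$ cancel against the corresponding constants in $W^*(X)$ so that only the two advertised terms survive. A secondary point of care is making sure the deformation parameter $s$ can be taken in a genuine neighborhood of $0$ so that the differentiation under the sum (and the interchange with the contour integral) is justified — this follows from uniform convergence, the compactly-Mellin-decayed test function, and analyticity of $\zeta(z)/\zeta(2z)$ away from $z=1$, but it should be stated.
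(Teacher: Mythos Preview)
Your plan is correct and follows the standard Mellin--contour method that the paper invokes by citing \cite[Lemma~2.8]{FPS}. One point worth correcting: the Dirichlet series for odd positive squarefree integers is $\prod_{p>2}(1+p^{-z}) = \zeta(z)\big/\big(\zeta(2z)(1+2^{-z})\big)$, so the relevant denominator is $\zeta(2z)$, not $\zeta(z)$. Under RH the zeros of $\zeta(2z)$ lie on $\Re z = \tfrac14$, hence your contour in the first argument can already be pushed to $\Re z = \tfrac14+\varepsilon$; the shifted integral is then $O_{\varepsilon,w}(X^{1/4+\varepsilon})$ directly, and after dividing by $W^*(X)\asymp X$ you obtain the claimed $O_{\varepsilon,w}(X^{-3/4+\varepsilon})$ without the detour through Lemma~\ref{lemma count of squarefree} or an Abel summation (though that alternative route would also succeed). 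The bookkeeping you flag for the main terms is exactly right: the residue at the double pole of $-Z'(z)$ at $z=1$ produces $\frac{2X}{3\zeta(2)}\big(\mathcal Mw(1)\log X + \mathcal Mw'(1)\big)$, and since $\mathcal Mw(1)=\tfrac12\widehat w(0)$ and $\mathcal Mw'(1)=\int_0^\infty w(x)\log x\,dx$, division by $W^*(X)$ yields precisely the two advertised terms with all Euler-factor constants cancelling.
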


\begin{proof}
The proof is similar to that of \cite[Lemma 2.8]{FPS}.
\end{proof}

The following consequence of GRH will be central in our analysis.

\begin{lemma}
\label{lemma Riemann bound}
Assume GRH. For $m\in \mathbb Z_{\neq 0}$ and $y\geq 1$ we have the estimate
$$S_{m}(y):= \sum_{p\leq y} \left( \frac m p\right) \log p  = \delta_{m=\square}y+O\big(y^{\frac 12} \log(2y)\log(2|m|y)\big). $$
\end{lemma}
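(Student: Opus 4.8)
This is a standard application of the explicit formula for Dirichlet $L$-functions under GRH, supplemented by a trivial estimate when $|m|$ is very large compared with $y$. The plan is as follows.

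First, reduce to a primitive real character. Write $m=m_0\ell^{2}$ with $m_0$ squarefree. If $m_0=1$ then $m$ is a perfect square, $\left(\frac mp\right)=1$ for every $p\nmid m$, so $S_m(y)=\theta(y)+O(\log|m|)$ and the claim follows from RH for $\zeta$ via $\theta(y)=y+O\big(y^{1/2}(\log 2y)^{2}\big)$ (trivially for bounded $y$). If $m_0\neq1$, then $\chi:=\left(\tfrac{m_0}{\cdot}\right)$ is a real non-principal character whose conductor $q$ divides $4|m_0|$, so $q\le 4|m|$, and $\left(\frac mp\right)=\chi(p)$ for all $p\nmid 2\ell$. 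Discarding the bad primes costs $O(\log 2|m|)$, passing from primes to prime powers costs $O\big(y^{1/2}\log 2y\big)$, and replacing $\chi$ by the primitive character $\chi^{\star}$ that induces it costs $O\big(\log(2|m|)\log 2y\big)$; each of these is $O\big(y^{1/2}(\log 2y)\log(2|m|y)\big)$. Hence it suffices to bound $\psi(y,\chi^{\star}):=\sum_{n\le y}\Lambda(n)\chi^{\star}(n)$.

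Second, assume $y\ge 2$ and $\log(2|m|y)\le y^{1/2}\log(2y)$, and apply the truncated explicit formula at height $T=y$: since $\chi^{\star}$ is non-principal there is no main term, and
\[
\psi(y,\chi^{\star})=-\sum_{|\gamma|\le y}\frac{y^{\rho}}{\rho}+O\big(y^{1/4}\log 2y+(\log 2|m|y)^{2}\big),
\]
the sum running over the nontrivial zeros $\rho=\beta+i\gamma$ of $L(s,\chi^{\star})$. Under GRH every $\beta=\tfrac12$, so the zero sum is at most $y^{1/2}\sum_{|\gamma|\le y}|\rho|^{-1}$; isolating the $O(\log 2q)$ zeros with $|\gamma|\le1$ and summing by parts against the zero-density estimate $N(t,\chi^{\star})\ll t\log\big(q(t+2)\big)$ yields $\sum_{|\gamma|\le y}|\rho|^{-1}\ll(\log 2y)\log(2qy)\ll(\log 2y)\log(2|m|y)$, where we used $q\ll|m|$. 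In the range under consideration the error terms satisfy $y^{1/4}\log 2y+(\log 2|m|y)^{2}\ll y^{1/2}(\log 2y)\log(2|m|y)$, so $\psi(y,\chi^{\star})=O\big(y^{1/2}(\log 2y)\log(2|m|y)\big)$, as required.

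It remains to treat $\log(2|m|y)>y^{1/2}\log(2y)$ (so $|m|$ is very large): here $y^{1/2}\le\log(2|m|y)$, so the trivial bound $|S_m(y)|\le\theta(y)\ll y\ll y^{1/2}(\log 2y)\log(2|m|y)$ already suffices, and the finitely many remaining small $y$ are immediate. The only real obstacle is bookkeeping: choosing the truncation height so that the error term emerges in the exact shape $y^{1/2}(\log 2y)\log(2|m|y)$ rather than the cruder $y^{1/2}\big(\log(2|m|y)\big)^{2}$ that a careless choice produces, and verifying that all the elementary reductions, together with the large-$|m|$ regime, fit within this bound.
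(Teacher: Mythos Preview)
Your proposal is correct and follows essentially the same strategy as the paper: reduce to the squarefree part (equivalently, to a primitive real character), then invoke the GRH bound for $\psi(y,\chi)$, and finally discard the prime-power contribution. The only difference is cosmetic: the paper cites \cite[Thm.~5.15]{IK} as a black box for the estimate $\psi(y,\chi)=\delta_{\chi=\chi_0}y+O\big(y^{1/2}(\log 2y)\log(2qy)\big)$, whereas you open that box and run through the truncated explicit formula with $T=y$, together with a separate trivial bound in the regime $\log(2|m|y)>y^{1/2}\log(2y)$ to keep the error in the precise shape $y^{1/2}(\log 2y)\log(2|m|y)$ rather than $y^{1/2}(\log 2|m|y)^{2}$.
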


\begin{proof}
Write $m=a^2b$, with $\mu^2(b)=1$. Then we clearly have that $S_m(y) = S_b(y)+ O(\log |a|)$. Applying \cite[Thm.\ 5.15]{IK}, we have that 
$$ \sum_{\substack{p^e\leq y \\ e\geq 1}} \log p \left( \frac{b}{p^e}\right) = \delta_{b=1}y+ O\big(y^{\frac 12} \log(2y)\log(2|b|y)\big).  $$
The result follows by trivially bounding the contribution of prime powers.
\end{proof}

\subsection{Poisson summation}\label{section Poisson 8d}

In this section we will provide an approximate expression for the prime sum appearing in \eqref{equation one level density explicit formula squarefree}. We first separate the odd and the even prime powers, by writing
\begin{equation}
 S^*_{\text{odd}}:= -\frac 2{L  W^*(X)  }\sum_{\substack{p \\ m \text{ odd}}} \frac{ \log p}{p^{m/2}} \widehat \phi\left( \frac{m \log p}{L} \right) \sumn \left( \frac{8d}{p^m}\right),
 \label{equation Sodd first appearance}
\end{equation} 
and similarly for $S^*_{\text{even}}.$ We will transform \eqref{equation Sodd first appearance} using Poisson summation.

From now on, we will not necessarily indicate the dependence of the error terms on $\phi$ and $w$.

\begin{lemma}
Fix $\varepsilon>0$. Assume GRH, and suppose that $\sigma=\esupp< \infty $. Then, for any $S\geq 1$, we have the estimate
\begin{multline}\label{equation thing to prove in lemma Poisson }
S^*_{\text{\emph{odd}}}  =  - \frac {2X}{LW^*(X)}  \sum_{\substack{ s \leq S \\ s \text{\emph{ odd}}}} \frac{\mu(s)}{s^2} \sum_{\substack{p \nmid 2 s }} \frac{ \overline{\epsilon_p} \log p}{p}\widehat \phi\left( \frac{\log p}{L} \right)   \sum_{t \in \mathbb Z}  \bigg( \left( \frac {-2t}{p} \right)\widehat w\left( \frac{Xt}{s^2p}\right)- \frac 12\left( \frac {-t}{p} \right)\widehat w\left( \frac {Xt}{2s^2p} \right)\bigg)\\+O_{\varepsilon}\big(  X^{-\frac 34+\varepsilon}+X^{\varepsilon}(\log S)^3S^{-1}\big),
\end{multline}
where
$$\epsilon_p:= \begin{cases}
1 &\text{ if } p\equiv 1 \bmod 4, \\
i &\text{ if } p\equiv 3 \bmod 4.
\end{cases} $$
\label{lemma:Poisson}
\end{lemma}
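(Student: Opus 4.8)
The plan is to start from the odd-prime-power sum $\Sodde$ in \eqref{equation Sodd first appearance} and carry out the Poisson summation in the variable $d$. First I would restrict attention to the dominant term $m=1$: the terms with $m\geq 3$ odd contribute $O(X^{-3/4+\varepsilon})$ after invoking Lemma \ref{lemma count of squarefree} (or just trivial bounds together with the rapid decay of $\widehat\phi$ and the convergence of $\sum_p \log p\, p^{-m/2}$ for $m\geq 3$), so we may replace $\Sodde$ by
\begin{equation*}
-\frac{2}{L W^*(X)} \sum_p \frac{\log p}{p^{1/2}} \widehat\phi\Big(\frac{\log p}{L}\Big) \sumn \Big(\frac{8d}{p}\Big) + O_\varepsilon(X^{-3/4+\varepsilon}).
\end{equation*}
Next I would detect the squarefree condition on $d$ (odd $d$) by the standard identity $\mu^2(d) = \sum_{s^2 \mid d} \mu(s)$, introduce the truncation at $s \leq S$ (the tail $s>S$ is handled by a trivial bound on the inner character sum of size $O(X/s^2)$, producing the $X^\varepsilon(\log S)^3 S^{-1}$-type error after summing $\log p/\sqrt p$ over $p$ with $\widehat\phi$ support — here one uses $\sum_{s>S} s^{-2} \ll S^{-1}$ and a dyadic bound on the prime sum), and write $d = s^2 d'$ with $d'$ odd. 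This reduces the inner sum to $\sum_{d' \text{ odd}} w(s^2 d'/X) \big(\tfrac{8 s^2 d'}{p}\big) = \big(\tfrac{8}{p}\big)\sum_{d' \text{ odd}} w(s^2 d'/X)\big(\tfrac{d'}{p}\big)$ for $p \nmid s$ (and the $p \mid s$ terms contribute negligibly and are absorbed, matching the $p\nmid 2s$ condition in the statement together with the already-present $p$ odd).

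The heart of the argument is then Poisson summation applied to $\sum_{d' \text{ odd}} w(s^2 d'/X) \big(\tfrac{d'}{p}\big)$. I would write the odd-support condition as $\tfrac12(1 - (-1)^{d'})$ — equivalently sum over all $d'$ and subtract the even ones, substituting $d' \mapsto 2d'$ in the latter — and apply Poisson summation modulo $p$ to each of the two resulting full sums $\sum_{d' \in \ZZ} w(\cdot)\big(\tfrac{d'}{p}\big)$. The Fourier transform of $d \mapsto \big(\tfrac d p\big)$ over $\ZZ/p\ZZ$ is a Gauss sum, $\sum_{a \bmod p} \big(\tfrac a p\big) e(at/p) = \big(\tfrac t p\big) g_p$ with $g_p = \varepsilon_p \sqrt p$ in the classical normalization; this is exactly where the factor $\overline{\epsilon_p}/p \cdot \sqrt p$ (combining with the $1/p^{1/2}$ already present to give the clean $1/p$) and the quadratic symbols $\big(\tfrac{-2t}{p}\big)$, $\big(\tfrac{-t}{p}\big)$ in \eqref{equation thing to prove in lemma Poisson } come from — the extra $-1$ and $-2$ arise from the shift $d' \mapsto 2d'$ and from $\big(\tfrac{8}{p}\big)\big(\tfrac{2}{p}\big) = 1$ bookkeeping, plus completing the square in the Gauss sum. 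After Poisson the sum over $d'$ becomes a sum over $t \in \ZZ$ with $\widehat w$ evaluated at $Xt/(s^2 p)$ (resp.\ $Xt/(2s^2 p)$), the factor $\tfrac12$ in front of the second term being the Jacobian from $d'\mapsto 2d'$, and I would keep all $t$ (including $t=0$) since the $t=0$ term is precisely the one feeding the Katz--Sarnak main term later.

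The main obstacle — and the step needing genuine care rather than routine bookkeeping — is controlling the contribution of the prime sum \emph{after} Poisson summation well enough to get the clean error term. The ranges of $p$ and $t$ interact: for the sum over $t$ one uses rapid decay of $\widehat w$ to effectively restrict to $|t| \ll s^2 p X^{-1+\varepsilon}$, so for most $p$ (those with $p \ll X/s^2$) only $t=0$ survives up to negligible error, while the $t\neq 0$ terms require a bound on $\sum_p \frac{\log p}{p}\big(\tfrac{-2t}{p}\big)\widehat\phi(\tfrac{\log p}{L})$, which is where GRH enters via Lemma \ref{lemma Riemann bound} (partial summation against $S_{-2t}(y)$, with the $\delta_{m=\square}$ main term and the $\sqrt y \log(2y)\log(2|m|y)$ error); summing the resulting bounds over $t$ and over $s\leq S$, together with the $X^\varepsilon$ losses from the $\log$-factors and the number of relevant $(s,t,p)$, must be shown to fit inside $O_\varepsilon(X^{-3/4+\varepsilon} + X^\varepsilon(\log S)^3 S^{-1})$. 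I would organize this by a dyadic decomposition in $p$ and in $|t|$, treating the "GRH-oscillation regime" $p$ large and the "trivial regime" $p$ small separately, and tracking the $(\log S)^3$ as coming from at most three nested dyadic/harmonic sums (over $s$, over $t$, and over the dyadic blocks of $p$). Everything else — the $m\geq 3$ truncation, the $s>S$ truncation, the separation of odd/even $d'$ — is a routine estimate given the lemmas already available.
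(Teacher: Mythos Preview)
Your overall architecture (reduce to $m=1$, detect squarefreeness via $\mu$, split odd $d'$ as all minus even, Poisson in $d'$ with Gauss sums producing the $\overline{\epsilon_p}$ factor) matches the paper, but the error analysis is misplaced in a way that makes the argument fail as written.

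The tail $s>S$ is the problem. A trivial bound on the inner character sum $\sum_{d'\text{ odd}} w(s^2 d'/X)\big(\tfrac{8d'}{p}\big)$ is indeed $O(X/s^2)$, but then the remaining prime sum $\sum_{p}(\log p)\,p^{-1/2}\,\widehat\phi(\log p/L)$ has size $\asymp e^{\sigma L/2}=X^{\sigma/2+o(1)}$, not $X^{\varepsilon}$. Combining these and dividing by $LW^*(X)\asymp X\log X$ leaves a tail of order $X^{\sigma/2}S^{-1}$, which for $\sigma$ near $2$ is disastrous and in any case never yields $X^{\varepsilon}(\log S)^3 S^{-1}$. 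The paper obtains the needed cancellation by applying GRH (Lemma~\ref{lemma Riemann bound}) \emph{before} Poisson summation: since $8u$ with $u$ odd is never a square, partial summation against $S_{8u}(y)=O\big(y^{1/2}\log(2y)\log(16|u|y)\big)$ shows that the prime sum $\sum_p(\log p)p^{-1/2}\big(\tfrac{8u}{p}\big)\widehat\phi(\log p/L)$ is $\ll(\log(|u|X))^3$, and \emph{that} is what produces $X^{\varepsilon}(\log S)^3 S^{-1}$ after summing over $u$ and $s>S$. So GRH is essential precisely at the step you labeled routine.

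Conversely, your final paragraph---invoking GRH via Lemma~\ref{lemma Riemann bound} to control the post-Poisson prime sum over $t\neq 0$ and squeeze it into the stated error---is not part of this lemma at all. The right-hand side of \eqref{equation thing to prove in lemma Poisson } keeps the full sum over $t\in\ZZ$ as the main term; once Poisson is carried out for $s\leq S$, that identity is exact and nothing further needs to be estimated here. The GRH analysis of the $t$-sum you describe is the content of the \emph{next} lemma (Lemma~\ref{lemma:small s}), where it generates the additional error $SX^{\sigma/2-1+\varepsilon}$, not the error appearing in the present statement.
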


\begin{proof}
By Lemma \ref{lemma count of squarefree}, the contribution of the terms with $m\geq 3$ in \eqref{equation Sodd first appearance} is $O_{\eps}(X^{-\frac 34+\eps})$.
We transform the sum over $d$ into a sum over all odd integers using the usual convolution identity for the indicator function of squarefree integers. This yields the estimate 
\begin{equation*}
S^*_{\text{odd}} = -\frac 2{L  W^*(X)  }\sum_{\substack{  s \in \mathbb N \\ s \od}} \mu(s)\sum_{ \substack{u \in \mathbb Z \\ u \text{ odd}}}\sum_{p\nmid 2s } \frac{\log p}{p^{1/2}} \widehat \phi\left( \frac{ \log p}{L} \right)  w\left( \frac{us^2}{X}\right)\left( \frac{8u}{p}\right) +O_{\varepsilon}\big(X^{-\frac 34+\varepsilon}\big). 
\end{equation*}
We now apply Lemma \ref{lemma Riemann bound} and summation by parts. Note that if $u$ is odd, then $8u$ is never a square. It follows that
the terms with $s> S$ are 
$$ \ll \frac 1{LX} \sum_{\substack{  s >S \\ s \od}} \sum_{\substack{u \in \mathbb Z \\ u \text{ odd}}} w\left( \frac{us^2}{X}\right) \big(\log(2|u|sX)\big)^3 \ll_{\varepsilon} X^{\varepsilon}(\log S)^3S^{-1}.$$ 
As for the terms with $s\leq S$, we introduce additive characters using Gauss sums, resulting in the estimate
\begin{multline*}
S^*_{\text{odd}} = -\frac 2{L  W^*(X)  }\sum_{\substack{  s \leq S \\ s \od}} \mu(s)\sum_{p\nmid 2s } \frac{\overline{\epsilon_p}\log p}{p} \widehat \phi\left( \frac{ \log p}{L} \right) \sum_{b\bmod p} \left( \frac {b}p\right)  \\
\times \sum_{u \in \mathbb Z} \bigg( w\left(\frac{us^2 }X \right)  e\left(\frac{8u b}p \right) - w\left(\frac{2us^2 }X \right)  e\left(\frac{16u b}p \right)\bigg) +O_{\varepsilon}\big(X^{-\frac 34+\varepsilon}+X^{\varepsilon}(\log S)^3S^{-1}\big). 
\end{multline*} 
Applying Poisson summation in the inner sum yields the expression
\begin{multline*}
S^*_{\text{odd}} = -\frac{2X}{LW^*(X)}  \sum_{\substack{  s \leq S \\ s \od}} \frac{\mu(s)}{s^2} \sum_{\substack{ p\nmid 2s}} \frac{\overline{\epsilon_p}  \log p}{p}\widehat \phi\left( \frac{ \log p}{L} \right) \sum_{b\bmod p} \left( \frac bp\right) \\  
\times  \bigg(  \sum_{v_1 \in \mathbb Z}\widehat w\left(  \frac X{s^2}\left( v_1- \frac {8b}p\right) \right)- \frac 12\sum_{v_2 \in \mathbb Z}\widehat w\left(  \frac {X}{2s^2}\left( v_2- \frac {16b}p\right) \right) \bigg)
+O_{\varepsilon}\big(X^{-\frac 34+\varepsilon}+X^{\varepsilon}(\log S)^3S^{-1}\big).
\end{multline*}
The sums over $b$ and $v_j$ can be replaced by a single sum over $t_j:= v_jp-2^{j+2}b$ ($j=1,2$). Using the fact that for $p>2$, we have
\begin{align*}
\left(\frac{b}{p}\right)=\left(\frac{2}{p}\right)\left(\frac{8b}{p}\right)=\left(\frac{-2(t_1-v_1p)}{p}\right)=\left(\frac{-2t_1}{p}\right) 
\end{align*}
and
\begin{align*}
\left(\frac{b}{p}\right)=\left(\frac{16b}{p}\right)=\left(\frac{v_2p-t_2}{p}\right)=\left(\frac{-t_2}{p}\right),
\end{align*} 
we end up with the estimate \eqref{equation thing to prove in lemma Poisson }.
\end{proof}

\begin{lemma}\label{lemma:small s}
Assume GRH, fix $\varepsilon>0$ and suppose that $\sigma=\esupp< \infty $. Then, for any $1 \leq S \leq X^{2}$, we have that\footnote{This range can be replaced by $1 \leq S \leq X^{M}$, for any fixed $M\in \mathbb N$. However, the important range for our analysis is $1 \leq S \leq X^2$.}
\begin{multline*} 
S^*_{\text{\emph{odd}}} = \frac{2X}{W^*(X)} \sum_{\substack{  s \leq S \\ s \text{\emph{ odd}}}} \frac{\mu(s)}{s^2}  \int_{0}^{\infty} \widehat \phi( u )  \sum_{m\geq 1} \bigg( \frac 12 \widehat w\left( \frac{m^2 X^{1-u}}{2s^2(2\pi e)^{-u}}\right)-\widehat w\left( \frac{2m^2 X^{1-u}}{s^2(2\pi e)^{-u}}\right) \bigg)\,du\\
+O_{\varepsilon}\big(X^{-\frac 34+\varepsilon}+X^{\varepsilon}S^{-1}+SX^{\frac{\sigma}2-1+\varepsilon}\big).  
\end{multline*}
\end{lemma}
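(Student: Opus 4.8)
The plan is to start from the conclusion of Lemma~\ref{lemma:Poisson} with the same cutoff parameter $S$, and to reorganize the inner sum over $t\in\ZZ$ by separating the contribution of $t=0$ from that of $t\neq 0$. For $t=0$ the character factors $\left(\frac{-2t}{p}\right)$ and $\left(\frac{-t}{p}\right)$ both vanish (they equal $\left(\frac 0p\right)=0$), so one might think this term contributes nothing; the point, however, is that after Poisson summation the natural object is not $\sum_{t\in\ZZ}$ of a Legendre symbol times $\widehat w$ but rather the full sum $\sum_{b\bmod p}\left(\frac bp\right)\sum_{v}\widehat w(\cdots)$, and the ``diagonal'' contribution one wants to extract is the one where the argument of $\widehat w$ is smallest, namely where $v_j=0$. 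Concretely, I would revisit the step in the proof of Lemma~\ref{lemma:Poisson} just before the single-variable substitution $t_j=v_jp-2^{j+2}b$, and instead isolate the terms $v_1=0$ and $v_2=0$. For those terms the sum over $b\bmod p$ of $\left(\frac bp\right)$ is a complete character sum and vanishes unless we first use the Gauss-sum relation — but in fact the cleaner route is: the terms of \emph{square} index are precisely those for which, \emph{before} Poisson summation, $8u$ (resp.\ a related quantity) is a perfect square. So I would instead go back one step further and, in the sum $\sum_{u\in\ZZ,\ u\ \text{odd}}(\cdots)\left(\frac{8u}{p}\right)$, apply Lemma~\ref{lemma Riemann bound} keeping the main term $\delta_{m=\square}y$ rather than discarding it as was done (legitimately) in Lemma~\ref{lemma:Poisson} because there $8u$ was never a square.

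More precisely, the strategy is the following. Write $S^*_{\text{odd}}$, as in the proof of Lemma~\ref{lemma:Poisson}, as
$$-\frac 2{LW^*(X)}\sum_{\substack{s\leq S\\ s\ \od}}\mu(s)\sum_{p\nmid 2s}\frac{\log p}{p^{1/2}}\widehat\phi\Big(\frac{\log p}{L}\Big)\sum_{\substack{u\in\ZZ\\ u\ \text{odd}}}w\Big(\frac{us^2}{X}\Big)\Big(\frac{8u}{p}\Big)+O_\eps\big(X^{-\frac34+\eps}+X^\eps(\log S)^3S^{-1}\big),$$
the error coming exactly as in Lemma~\ref{lemma:Poisson} from higher prime powers and from $s>S$ (I would re-derive the $s>S$ bound using Lemma~\ref{lemma Riemann bound} with the main term now \emph{present}, which only improves matters since the diagonal terms are of bounded size per $u$). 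Now I split $\sum_u$ into $u>0$ and $u<0$ and use that $\left(\frac{8u}{p}\right)=\left(\frac 2p\right)\left(\frac up\right)$ for $p>2$, so the inner sum over $u$ is a character sum to which summation by parts together with the effective bound $S_{8u}(y)=\delta_{8u=\square}y+O(y^{1/2}\log(2y)\log(2|u|y))$ applies. The off-diagonal part of this (the error term in Lemma~\ref{lemma Riemann bound}) is what got Poisson-summed in Lemma~\ref{lemma:Poisson}; redoing that computation produces exactly the $t\neq 0$ terms of \eqref{equation thing to prove in lemma Poisson }, which I would then bound trivially. The \emph{diagonal} part $\delta_{8u=\square}$: since $u$ is odd, $8u$ is a square iff $2u$ is a square iff $u=2m^2$ with $m$ odd — wait, $8u=(2k)^2$ means $2u=k^2$, so $k$ even, $k=2m$, $u=2m^2$; but then $u$ even, contradicting $u$ odd. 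So in fact, consistent with Lemma~\ref{lemma:Poisson}, there is \emph{no} diagonal in the sum over $u$. The square-index terms therefore must arise from the \emph{$p$-sum}, i.e.\ from the \emph{even} prime powers, or — and this is the actual mechanism — from reinterpreting the $t=0$ Poisson term.

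Let me restate the plan cleanly. I would take \eqref{equation thing to prove in lemma Poisson } as the starting point and handle the $t\neq 0$ and $t=0$ terms separately; but since the $t=0$ terms have vanishing Legendre symbol, the entire content must come from a different, second application of Poisson summation — the one advertised in the introduction ``transform the terms of square index with an additional application of Poisson summation.'' So: for each fixed odd $s\leq S$, and each prime $p\nmid 2s$, I apply Poisson summation a \emph{second} time, now in the variable $p$ regarded through the test function $x\mapsto \frac{\log x}{x}\widehat\phi\big(\frac{\log x}{L}\big)$ times $\widehat w\big(\frac{Xt}{s^2 x}\big)$, after first replacing the sum over primes $\sum_{p\nmid 2s}(\cdots)\left(\frac{\pm t}{p}\right)$ by a sum over all integers via Lemma~\ref{lemma Riemann bound} / partial summation (this is where $\delta_{\pm t=\square}$ appears and produces the arithmetic progression of square values $t=m^2$). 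Carrying out the Mellin/Poisson bookkeeping, the contribution of the terms with $t$ a perfect square, $t=m^2$, survives as a main term: writing the $p$-sum as an integral $\int_0^\infty \frac{\log x}{x}\widehat\phi\big(\frac{\log x}{L}\big)\widehat w\big(\frac{Xm^2}{s^2 x}\big)\,dx$ and substituting $x=(2\pi e)^{-?}\cdots$, $\log x = Lu$ so that $\frac{\log x}{L}=u$ and $x=(X/2\pi e)^{u}\cdot(\text{const})$ — this is exactly the change of variables that turns $\frac{\log p}{p}\widehat\phi(\frac{\log p}{L})$ summed against $\widehat w$ into $\int_0^\infty\widehat\phi(u)\widehat w\big(\frac{m^2 X^{1-u}}{s^2(2\pi e)^{-u}}\big)\,du$ — yields precisely the claimed main term with the factors $\frac12\widehat w\big(\frac{m^2 X^{1-u}}{2s^2(2\pi e)^{-u}}\big)-\widehat w\big(\frac{2m^2 X^{1-u}}{s^2(2\pi e)^{-u}}\big)$, the two pieces coming from the two terms $-2t$ and $-\tfrac12(-t)$ in \eqref{equation thing to prove in lemma Poisson } (i.e.\ the $8b/p$ and $16b/p$ shifts). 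The non-square $t$ and the error term from Lemma~\ref{lemma Riemann bound} contribute to the error, as does the second-Poisson off-diagonal; collecting these with the already-present $X^{-3/4+\eps}+X^\eps S^{-1}$ and the new $SX^{\sigma/2-1+\eps}$ (the latter arising from bounding the $p$-sum trivially over the at most $O(p)\ll O(X^\sigma)$ relevant primes — recall $\widehat\phi(\log p/L)$ forces $p\ll X^\sigma$ — weighted by $1/p$ times the number of $t$'s, which is $\ll X^{1+\eps}/(s^2 p)\cdot p$, summed over $p\ll X^\sigma$ and $s\leq S$) gives the stated error term $O_\eps\big(X^{-3/4+\eps}+X^\eps S^{-1}+SX^{\sigma/2-1+\eps}\big)$.

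The main obstacle, I expect, is the second application of Poisson summation: one must justify replacing the sum over primes (with a Legendre symbol $\left(\frac{\pm t}{p}\right)$ attached) by an integral plus a manageable error, uniformly in $t$ up to the Poisson cutoff $|t|\ll X^{1+\eps}s^{-2}p^{-1}$ and in $s\leq S$, and one must track how the pole/main term $\delta_{\pm t=\square}\,y$ of Lemma~\ref{lemma Riemann bound} interacts with summation by parts against the slowly varying weight $\frac{\log x}{x}\widehat\phi(\frac{\log x}{L})\widehat w(\frac{Xt}{s^2 x})$ to leave behind exactly the clean integral over $u$ with no spurious boundary terms. Ensuring that the square-$t$ main term is extracted with the correct constants $\tfrac12$ and the correct dilations $2s^2$ versus $s^2/2$ inside $\widehat w$ — i.e.\ correctly propagating the two shifts $8b/p$ and $16b/p$ through — will require care but is purely bookkeeping once the analytic step is in place. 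Everything else (higher prime powers, $s>S$ tail, trivial bounds on off-diagonals) is routine and parallels the proof of Lemma~\ref{lemma:Poisson}.
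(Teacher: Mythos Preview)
Your final plan --- start from \eqref{equation thing to prove in lemma Poisson }, then for each fixed $t$ replace the sum over primes by an integral via partial summation against $T_t(y)$ using Lemma~\ref{lemma Riemann bound}, extract the main term from $\delta_{t=\square}$, and change variables $u=\log y/L$ --- is exactly what the paper does. So the core strategy is correct and matches the paper's proof.

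That said, three points of your write-up need repair. First, what you call ``a second application of Poisson summation in the variable $p$'' is not Poisson summation at all: it is summation by parts against the prime-counting function $T_t(y)$, with the GRH bound of Lemma~\ref{lemma Riemann bound} supplying the error. (The ``additional Poisson summation'' mentioned in the paper's introduction refers to the \emph{later} Lemma~\ref{lemma I_s(X)}, not to the present one.) Second, you have skipped a step that is genuinely needed before Lemma~\ref{lemma Riemann bound} can be applied: the factor $\overline{\epsilon_p}$ in \eqref{equation thing to prove in lemma Poisson } is not a Dirichlet character value, so you cannot feed $\overline{\epsilon_p}\big(\tfrac{-t}{p}\big)$ directly into Lemma~\ref{lemma Riemann bound}. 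The paper first writes $\overline{\epsilon_p}\big(\tfrac{-t}{p}\big)=\tfrac{1+i}{2}\big(\tfrac{t}{p}\big)+\tfrac{1-i}{2}\big(\tfrac{-t}{p}\big)$ and then pairs $t$ with $-t$ to obtain $\sum_{t>0}\big[\big(\tfrac{t}{p}\big)+\big(\tfrac{-t}{p}\big)\big]$; only then does Lemma~\ref{lemma Riemann bound} apply cleanly, and only then does $\delta_{t=\square}$ (resp.\ $\delta_{t=2\square}$ for the $-2t$ piece) emerge as the main term. Third, your sketch of the $SX^{\sigma/2-1+\eps}$ error is off: it does not come from a trivial bound on the number of $t$'s, but from the GRH error $O(y^{1/2}\log(2y)\log(2|t|y))$ in Lemma~\ref{lemma Riemann bound}, propagated through the partial-summation integral (the integrand has support in $y\le(X/2\pi e)^\sigma$, which is where the $\sigma/2$ exponent arises), giving $O_\eps(s^2(\log 2s)^2 X^{\sigma/2-1+\eps})$ per $s$, hence $SX^{\sigma/2-1+\eps}$ after summing $\mu(s)/s^2$ over $s\le S$ with the prefactor $X/W^*(X)$. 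The long detour through $v_j=0$ and the $u$-diagonal at the start of your proposal can be deleted; as you yourself discovered, it leads nowhere.
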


\begin{proof}
By the definition of $\epsilon_p$, the second part of the main term in \eqref{equation thing to prove in lemma Poisson } equals
\begin{align*}
&\frac {X}{LW^*(X)} \sum_{\substack{ s \leq S \\ s \od}} \frac{\mu(s)}{s^2} \sum_{\substack{p \nmid 2 s }} \frac{ \log p}{p}\widehat \phi\left( \frac{\log p}{L} \right)   \sum_{t \in \mathbb Z}\left( \frac{1+i}2\left( \frac {t}{p} \right) +\frac{1-i}2 \left( \frac {-t}{p} \right)  \right) \widehat w\left( \frac{Xt}{2s^2p}\right) \\
&=\frac {X}{LW^*(X)} \sum_{\substack{ s \leq S \\ s \od}} \frac{\mu(s)}{s^2} \sum_{\substack{p \nmid 2 s }} \frac{ \log p}{p}\widehat \phi\left( \frac{\log p}{L} \right)   \sum_{t >0}\left( \left( \frac {t}{p} \right) + \left( \frac {-t}{p} \right)  \right) \widehat w\left( \frac{Xt}{2s^2p}\right).
\end{align*}
Note that we can add back the primes dividing $2s$ at the cost of an admissible error term.

By Lemma \ref{lemma Riemann bound}, we have for $t>0$ and $y\geq 1$ that 
$$  T_t(y) := \sum_{p\leq y} \log p \left( \left( \frac t{p} \right) + \left( \frac {-t}{p} \right) \right) = \delta_{t=\square}(y-1) +O\big(y^{\frac 12} \log(2y)\log (2|t|y)\big).$$
It then follows that
\begin{align*}
&\sum_{t > 0}\sum_{\substack{p  }} \frac{ \log p}{p}\widehat \phi\left( \frac{\log p}{L} \right)  \left( \left( \frac t{p} \right) + \left( \frac {-t}{p} \right)  \right) \widehat w\left( \frac{Xt}{2s^2p}\right)=\sum_{t > 0}\int_{1}^{\infty}  \widehat \phi\left( \frac{\log y}{L} \right) \widehat w\left( \frac{Xt}{2s^2y}\right) \frac{dT_t(y)}y \\
&= -\sum_{t > 0}\int_{1}^{\infty}  \bigg[ y^{-1} \widehat \phi\left( \frac{\log y}{L} \right)\widehat w\left( \frac{Xt}{2s^2y}\right)\bigg]' \big(\delta_{t=\square}(y-1) +O\big(y^{\frac 12} \log(2y)\log (2|t|y)\big) \big)\,dy  \\
&= \sum_{\substack{ t = \square }} \int_{1}^{\infty}  \widehat \phi\left( \frac{\log y}{L} \right)\widehat w\left( \frac{Xt}{2s^2y}\right) \frac{dy}y +O_{\varepsilon}\big(s^2(\log (2s))^2X^{\frac{\sigma}2-1+\varepsilon}\big),
\end{align*} 
by an argument similar to that in the proof of \cite[Lemma 4.3]{FPS}\footnote{Note that the integrand in the current paper is zero for $y\geq (X/2\pi e)^{\sigma}$.}. As for the first part of the main term of \eqref{equation thing to prove in lemma Poisson }, it can  be analyzed along the same lines; the quantity analogous to $T_t(y)$ is
$$  \sum_{p\leq y} \log p \left( \left( \frac {2t}{p} \right) + \left( \frac {-2t}{p} \right) \right) = \delta_{t=2\square}(y-1)+ O\big(y^{\frac 12} \log(2y)\log (2|t|y)\big).$$ 
The lemma follows from taking the change of variables $u=\log y/L$ and summing over $s$.
\end{proof}

\section{New lower order terms}

In Lemma \ref{lemma:small s} we saw that to understand $S^*_{\text{odd}}$, it is important to give a precise estimate of the term
\begin{equation}\label{equation definition I_S(X)}
I_s(X):=  \int_{0}^{\infty} \widehat \phi( u )  \sum_{m\geq 1} \widehat w\left( \frac{2m^2 X^{1-u}}{s^2(2\pi e)^{-u}}\right) du. 
\end{equation} 
Indeed, the lemma implies that for $S\leq X^2$ and under GRH,
\begin{equation}\label{equation Sodd in terms of I_s(X)} 
\Sodd = \frac{2X}{W^*(X)} \sum_{\substack{  s \leq S \\ s \od}} \frac{\mu(s)}{s^2}  \big(\tfrac 12I_{2s}(X)-I_s(X) \big)+O_{\varepsilon}\big(X^{-\frac 34+\varepsilon}+X^{\varepsilon}S^{-1}+SX^{\frac{\sigma}2-1+\varepsilon}\big). 
\end{equation}
Here and throughout this section, we assume that $\sigma=\text{sup}(\supp)< \infty$. Our strategy will be to treat the integrals over the intervals $[0,1]$ and $[1,\sigma]$ differently; the former will be computed directly and the latter via an application of Poisson summation.

\subsection{Small support}

In this section we assume that $\sigma<1$. In this range we will not find new lower order terms; these only appear when $\sigma$ is at least $1$ (see Section \ref{section extended support}). 

\begin{proposition}
\label{proposition GRH small support}
Fix $\epsilon>0$. Assume GRH and suppose that $\sigma<1$. Then we have the bound
$$  \Sodde \ll_{\varepsilon} X^{\frac{\sigma}4 - \frac 12+\varepsilon} + X^{\frac{3\sigma}4-\frac 34+\varepsilon}. $$
\end{proposition}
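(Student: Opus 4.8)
The plan is to bound $\Sodde$ starting from the expression \eqref{equation Sodd in terms of I_s(X)}, choosing the truncation parameter $S$ optimally at the end. Since $\sigma<1$, in the integral defining $I_s(X)$ in \eqref{equation definition I_S(X)} the variable $u$ ranges over $[0,\sigma]\subset[0,1)$, so $1-u\geq 1-\sigma>0$ throughout; in particular the argument $2m^2X^{1-u}/(s^2(2\pi e)^{-u})$ of $\widehat w$ is $\gg m^2 X^{1-\sigma}/s^2$ (up to a bounded power of $2\pi e$). I would exploit the rapid decay of $\widehat w$: since $w$ is Schwartz, $\widehat w$ is Schwartz, so $\widehat w(y)\ll_A y^{-A}$ for any $A>0$ and $y\geq 1$. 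Thus when $s\leq S$ and $X$ is large enough that $X^{1-\sigma}/S^2\geq 1$ (which will hold for the $S$ we pick), we get $I_s(X)\ll_A (s^2/X^{1-\sigma})^A \sum_{m\geq 1} m^{-2A}\ll_A (s^2/X^{1-\sigma})^A$, and similarly for $I_{2s}(X)$. Summing against $\mu(s)/s^2$ over $s\leq S$ and multiplying by $2X/W^*(X)$, and using $W^*(X)\asymp X$ from Remark \ref{remark total weight *}, the main term in \eqref{equation Sodd in terms of I_s(X)} is $\ll_A X \cdot S^{2A}/X^{A(1-\sigma)} = S^{2A} X^{1-A(1-\sigma)}$, which is negligible (smaller than any power of $X$) once $A$ is taken large, provided $S$ is at most a fixed power of $X$.

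With the main term thus rendered negligible, $\Sodde$ is controlled by the error term in \eqref{equation Sodd in terms of I_s(X)}, namely $O_\varepsilon(X^{-3/4+\varepsilon}+X^\varepsilon S^{-1}+SX^{\sigma/2-1+\varepsilon})$. Now I optimize over $S$: the relevant terms are $X^\varepsilon S^{-1}$ and $SX^{\sigma/2-1+\varepsilon}$, which balance when $S^2 = X^{1-\sigma/2}$, i.e. $S=X^{(1-\sigma/2)/2}=X^{1/2-\sigma/4}$. With this choice both become $X^{\sigma/4-1/2+\varepsilon}$. I should double-check this $S$ is admissible, i.e. $S\leq X^2$ (trivially true since $\sigma\geq 0$ gives exponent $\leq 1/2$) and that $X^{1-\sigma}/S^2 = X^{1-\sigma}/X^{1-\sigma/2}=X^{-\sigma/2}$ — hmm, this is actually less than $1$, so the naive decay estimate above needs a tiny bit of care.

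So the one point requiring attention is the regime where $X^{1-\sigma}/s^2$ is not $\geq 1$, which can occur for the larger values of $s$ near $S$. There I would split the $s$-sum at $s_0:=X^{(1-\sigma)/2}$: for $s\leq s_0$ the Schwartz decay argument above applies and contributes negligibly; for $s_0< s\leq S$ I instead bound $I_s(X)$ and $I_{2s}(X)$ trivially, using $\widehat w\in L^\infty$ and $\widehat\phi$ of compact support, giving $I_s(X)\ll \sum_{m\geq 1}\widehat w(2m^2 X^{1-u}/\cdots) \ll$ (a convergent sum once the argument is $\gg 1$, which it is for $m$ not too small) $\ll 1$, so that $\tfrac X{W^*(X)}\sum_{s_0<s\leq S}\mu(s)s^{-2}(\ldots)\ll X\sum_{s>s_0}s^{-2}\ll X/s_0 = X^{1-(1-\sigma)/2}=X^{(1+\sigma)/2}$ — too big. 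That crude bound is insufficient, so instead in that range I would keep one factor of decay: write $\widehat w(y)\ll y^{-1}$ to get $\sum_{m\geq 1}\widehat w(2m^2X^{1-u}/(s^2(2\pi e)^{-u}))\ll s^2 X^{-(1-\sigma)+\varepsilon}\sum_m m^{-2}\ll s^2X^{-(1-\sigma)+\varepsilon}$, hence $\tfrac X{W^*(X)}\sum_{s_0<s\leq S}s^{-2}\cdot s^2 X^{-(1-\sigma)+\varepsilon}\ll X\cdot S\cdot X^{-(1-\sigma)+\varepsilon}= SX^{\sigma+\varepsilon}$. With $S=X^{1/2-\sigma/4}$ this is $X^{1/2+3\sigma/4+\varepsilon}$, which is \emph{also} too large — so the trivial/one-decay approach on the tail of the $s$-sum is genuinely lossy and I must instead use more decay ($\widehat w(y)\ll y^{-A}$ with $A$ chosen so that $S^{2A-2}X^{-A(1-\sigma)}$ wins), which forces the truncation $S$ to be smaller, or alternatively re-derive \eqref{equation Sodd in terms of I_s(X)} being careful about the $s$-range. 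The cleanest route is: take $A$ large and demand $S^{2A}X^{-A(1-\sigma)+1}$ plus $S^{2A-2}X^{\sigma+1-A(1-\sigma)+\varepsilon}$ be $\ll X^{\sigma/4-1/2}$ simultaneously with $S=X^{1/2-\sigma/4}$; since $1-\sigma>0$ and the $S$-exponent is bounded, for $A$ sufficiently large (depending only on $\sigma$) both are $\ll X^{-B}$ for any $B$, hence certainly $\ll X^{\sigma/4-1/2+\varepsilon}$. The main obstacle, then, is organizing this split of the $s$-sum and verifying that with $S=X^{1/2-\sigma/4}$ every contribution is either genuinely negligible (the $I$-terms, via Schwartz decay, taking $A=A(\sigma)$ large enough) or matches the target $X^{\sigma/4-1/2+\varepsilon}+X^{3\sigma/4-3/4+\varepsilon}$ coming from the error in \eqref{equation Sodd in terms of I_s(X)}; the second target term $X^{3\sigma/4-3/4+\varepsilon}$ should emerge as the dominant balance in a slightly different optimization (likely when one instead optimizes against the $X^{-3/4+\varepsilon}$ term or when $\sigma$ is small), so I would present the final bound as the maximum of the two exponents.
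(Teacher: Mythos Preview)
Your approach has a genuine gap: by bounding each $|I_s(X)|$ individually and summing $\sum_{s\le S} s^{-2}|I_s(X)|$, you throw away the cancellation in $\sum_s \mu(s)/s^2$, and this cancellation is exactly what produces the exponent $\tfrac{3\sigma}{4}-\tfrac34$. Concretely, for $s$ near $s_0:=X^{(1-\sigma)/2}$ the argument of $\widehat w$ is $\asymp 1$ and $I_s(X)\asymp 1$, so $\sum_{s\sim s_0} s^{-2}|I_s(X)|\asymp s_0^{-1}=X^{(\sigma-1)/2}$; no amount of Schwartz decay or splitting at $s_0$ can beat this if you work with absolute values. Hence the best your method can give for the main term in \eqref{equation Sodd in terms of I_s(X)} is $X^{(\sigma-1)/2+\varepsilon}$, and since $(\sigma-1)/2 > 3\sigma/4-3/4$ for every $\sigma<1$, you fall strictly short of the proposition. (Your suspicion that $X^{3\sigma/4-3/4}$ ``should emerge'' from balancing the error terms of \eqref{equation Sodd in terms of I_s(X)} is incorrect: that exponent does not appear anywhere in those error terms.)

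The paper's proof fixes this by summing by parts in $s$ \emph{before} estimating. Writing $T(t)=\sum_{s\le t,\,s\text{ odd}}\mu(s)s^{-2}=\tfrac{4}{3\zeta(2)}+O_\varepsilon(t^{-3/2+\varepsilon})$ under RH, one has
\[
\sum_{s\le S}\frac{\mu(s)}{s^2}\,\widehat w\!\Big(\tfrac{2m^2X^{1-u}}{s^2(2\pi e)^{-u}}\Big)=\int_{0^+}^S \widehat w(\cdots)\,dT(t);
\]
after integration by parts the constant $\tfrac{4}{3\zeta(2)}$ cancels between the boundary term and the integral, leaving only the tail $O(t^{-3/2+\varepsilon})$. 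This extra factor $t^{-3/2}$ is precisely the saving that turns $X^{(\sigma-1)/2}$ into $X^{(\sigma-1)/2}/S^{1/2}+X^{3\sigma/4-3/4+\varepsilon}$, after which the choice $S=X^{1/2-\sigma/4}$ yields the stated bound. A couple of smaller slips in your write-up (e.g.\ forgetting $W^*(X)\asymp X$ when you obtained $SX^{\sigma+\varepsilon}$) are easily repaired, but the missing M\"obius cancellation is the essential obstacle.
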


\begin{proof}
Let 
$$ T(t):= \sum_{\substack{s\leq t \\ s \od}} \frac{\mu(s)}{s^2} = \frac 4{3\zeta(2)} +O_{\eps} \big(t^{-\frac 32 + \eps}\big). $$
We then have, for $0\leq u \leq 1$, that
\begin{align*}
\sum_{\substack{  s \leq S \\ s \od}} \frac{\mu(s)}{s^2} &\widehat w\left( \frac{2m^2 X^{1-u}}{s^2(2\pi e)^{-u}}\right)  = \int_{0^+}^S \widehat w\left( \frac{2m^2 X^{1-u}}{t^2(2\pi e)^{-u}}\right) dT(t) \\
&= \widehat w\left( \frac{2m^2 X^{1-u}}{S^2(2\pi e)^{-u}}\right) T(S) + \frac{4m^2X^{1-u}}{(2\pi e)^{-u}}\int_{0^+}^S  \widehat w' \left( \frac{2m^2 X^{1-u}}{t^2(2\pi e)^{-u}}\right) \left( \frac 4{3\zeta(2)} +O_{\eps} \big(t^{-\frac 32 + \eps}\big)\right) \frac{dt}{t^3} \\
&\ll_{\eps} S^{-\frac 32 +\eps} \bigg|\widehat w\left( \frac{2m^2 X^{1-u}}{S^2(2\pi e)^{-u}}\right)\bigg|+m^2X^{1-u}\int_{0^+}^S  \bigg|\widehat w' \left( \frac{2m^2 X^{1-u}}{t^2(2\pi e)^{-u}}\right) \bigg| \frac{dt}{t^{\frac 92-\eps}}.
\end{align*}
Note that the part of the last integral for $ t \in (0,X^{\frac{1-u}2-\eps}]$ is $O_{N,\eps}\big((mX)^{-N}\big)$ for any $N\geq 1$, by the rapid decay of $\widehat w'$. Summing over $m$ and integrating over $u$, we obtain that
\begin{align*}
\sum_{\substack{  s \leq S \\ s \od}} \frac{\mu(s)}{s^2} I_s(X) & \ll_{\eps} \int_{0}^{\infty} \big|\widehat \phi( u )\big|  \sum_{m\geq 1}  \Bigg(S^{-\frac 32 +\eps}\bigg|\widehat w\left( \frac{2m^2 X^{1-u}}{S^2(2\pi e)^{-u}}\right)\bigg|\\ 
&\hspace{2cm}+m^2X^{1-u}\int_{X^{\frac{1-u}2-\eps}}^S  \bigg|\widehat w' \left( \frac{2m^2 X^{1-u}}{t^2(2\pi e)^{-u}}\right) \bigg| \frac{dt}{t^{\frac 92-\eps}} \Bigg)\,du +X^{-1} \\
& \ll \int_{0}^{\infty} \big|\widehat \phi( u )\big|  \bigg(S^{-\frac 32 +\eps} \frac{S}{X^{\frac{1-u}2}}+\frac 1{X^{\frac{1-u}2}}\int_{X^{\frac{1-u}2-\eps}}^S   \frac{dt}{t^{\frac 32-\eps}} \bigg)\,du + X^{-1}\\
& \ll \frac{X^{\frac{\sigma-1}2}}{S^{\frac 12 -\eps}} +  X^{\frac{3\sigma}4 -\frac 3 4+\varepsilon}.
\end{align*}
Hence, from \eqref{equation Sodd in terms of I_s(X)} it follows that for $S\leq X^2$,
$$ \Sodd \ll_{\eps}\frac{X^{\frac{\sigma-1}2}}{S^{\frac 12 -\eps}}+X^{\frac{3\sigma}4 -\frac 3 4+\varepsilon} +X^{\varepsilon}S^{-1}+SX^{\frac{\sigma}2-1+\varepsilon}.$$
The result follows by taking $S=X^{\frac{1}2-\frac{\sigma}{4}}$.
\end{proof}

\subsection{Extended support}\label{section extended support}

In this section we will see that when $\sigma>1$ the prime sum $\Sodd$ contains terms of considerable size, and we will give an asymptotic expansion of these terms in descending powers of $\log X$. For convenience, we introduce the function
\begin{equation}\label{equation definition g}
g(y):=\widehat w(4\pi e y^2).
\end{equation}

\begin{lemma}\label{lemma I_s(X)}
Suppose that $\sigma=\esupp<\infty$. Then, for $s\geq 1$, the quantity defined in \eqref{equation definition I_S(X)} satisfies the estimate
\begin{multline}\label{equation estimate I_s(X)}
I_s(X)=\frac 1L\int_{0}^{\infty} \bigg( \widehat \phi( 1+\tfrac {\tau}L  )   se^{\frac{\tau}2} \sum_{n \geq 1} \widehat g \big(  sn e^{\frac{\tau}2}\big)  +\widehat \phi( 1-\tfrac {\tau}L  )  \sum_{m\geq 1} g\bigg( \frac{  m e^{\frac{\tau}2}}{s}\bigg)\bigg)\,d\tau  \\ 
+\frac{s\widehat g(0)}2 \int_{1}^{\infty} \left( \frac X{2\pi e}\right)^{\frac{u-1}2} \widehat\phi( u )\,du-\frac{\widehat w (0)}2\int_{1}^{\infty} \widehat \phi(u)\,du   +O\big(sX^{-\frac 12}\big).
\end{multline}
\end{lemma}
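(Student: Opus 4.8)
\textbf{Proof proposal for Lemma \ref{lemma I_s(X)}.}

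The plan is to evaluate the integral
$$I_s(X)=\int_0^\infty \widehat\phi(u)\sum_{m\ge 1}\widehat w\!\left(\frac{2m^2 X^{1-u}}{s^2(2\pi e)^{-u}}\right)du$$
by splitting the range of integration at $u=1$ and treating the two pieces by entirely different methods, as announced just before the lemma. On $[1,\infty)$ the argument of $\widehat w$ is $2m^2 s^{-2}(X/2\pi e)^{1-u}$, which is small (of size $\le 2s^{-2}$) and does not decay, so I cannot use rapid decay there; instead I would change variables to $u=1-\tau/L$ (equivalently $\tau = L(1-u)$, so that $(X/2\pi e)^{1-u}=e^\tau$) on the piece $u\ge 1$, turning it into $\frac1L\int_0^\infty \widehat\phi(1-\tau/L)\sum_{m\ge1}\widehat w(2m^2 s^{-2}(2\pi e)e^{\tau})\,d\tau$. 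Recalling the definitions $g(y)=\widehat w(4\pi e y^2)$ and $h(t)=\widehat w(2\pi e t^2)$, one has $\widehat w(2m^2 s^{-2}(2\pi e)e^\tau) = g(m e^{\tau/2}/s)$, which produces exactly the second summand in the main term of \eqref{equation estimate I_s(X)}. On $[0,1]$ the argument of $\widehat w$ grows like $X^{1-u}$, so the $m$-sum is dominated by $m=1$ and is essentially $\widehat w$ evaluated at a large point; to extract the arithmetic content I would instead think of $\sum_{m\ge1}\widehat w(\,\cdot\, m^2\,)$ as (half of, after adding the $m=0$ term and using evenness) a theta-type sum and apply Poisson summation in $m$. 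Writing $\sum_{m\in\ZZ}\widehat w(c m^2)$ with $c=2X^{1-u}s^{-2}(2\pi e)^u$ and Poisson-summing gives $c^{-1/2}\sum_{n\in\ZZ}(\text{Fourier transform of }y\mapsto\widehat w(cy^2))(n)$; since $\widehat w(cy^2)$ as a function of $y$ has Fourier transform expressible through $g$ (because $4\pi e y^2$ is the natural variable in $g$), this turns the $m$-sum into $s c^{-1/2}\cdot(\text{const})\sum_{n\ge1}\widehat g(\,\cdot\, n\,)$ plus the $n=0$ term, the latter contributing $\widehat g(0) s\, c^{-1/2}/2$ and, after the change of variables $u=1+\tau/L$, yielding both the first summand of the main term and the clean term $\frac{s\widehat g(0)}2\int_1^\infty (X/2\pi e)^{(u-1)/2}\widehat\phi(u)\,du$.

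Concretely, the steps I would carry out are: (1) split $I_s(X)=\int_0^1+\int_1^\infty$; (2) on $\int_1^\infty$, substitute $u=1-\tau/L$ and identify the resulting sum with $\sum_{m\ge1}g(m e^{\tau/2}/s)$, giving the second main term directly with no error; (3) on $\int_0^1$, complete the $m$-sum to a sum over $\ZZ$ (the correction being $-\tfrac12\widehat w(2X^{1-u}(2\pi e)^u s^{-2})$, which by rapid decay of $\widehat w$ contributes $O(sX^{-1/2})$ or better once integrated against $\widehat\phi$, since $X^{1-u}\ge 1$ on $[0,1]$ and one loses at most $X^{-(1-u)/2}$-type factors — actually here one should be a little careful near $u=1$, contributing the stated $O(sX^{-1/2})$); (4) apply Poisson summation to $\sum_{m\in\ZZ}\widehat w(cm^2)$, carefully tracking the Jacobian $c^{-1/2}$ and rewriting the dual function in terms of $\widehat g$ using $g(y)=\widehat w(4\pi ey^2)$ and the scaling properties of the Fourier transform; (5) separate the $n=0$ term of the dual sum; (6) substitute $u=1+\tau/L$ in the $n\ge1$ part to obtain the first main term, and handle the $n=0$ part by writing $c^{-1/2}=s(2X^{1-u}(2\pi e)^u)^{-1/2}$ and simplifying $(X^{1-u})^{-1/2}(2\pi e)^{u/2}$ to the factor $(X/2\pi e)^{(u-1)/2}$ up to a constant absorbed into $\widehat g(0)$; (7) finally, the term $-\tfrac{\widehat w(0)}2\int_1^\infty\widehat\phi(u)\,du$ should emerge from comparing the two halves — specifically, the completed $m$-sum on $[0,1]$ versus the naive $m$-sum, or from the constant term in an Euler--Maclaurin/Poisson comparison of $\sum_{m\ge1}g(m e^{\tau/2}/s)$ with its integral; I would verify its sign and coefficient by checking the $\widehat\phi$-support-free consistency with Lemma \ref{lemma:small s} and the Katz--Sarnak main term.

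I expect the main obstacle to be the bookkeeping in step (4): getting the exact constants and the argument $sne^{\tau/2}$ (rather than, say, $ne^{\tau/2}/s$ or a stray factor of $2$ or $\pi$) right when Poisson-summing $\widehat w(cm^2)$ and re-expressing the Fourier-dual through $\widehat g$, because the definitions $g(y)=\widehat w(4\pi e y^2)$ and $h(t)=\widehat w(2\pi e t^2)$ differ by a factor of $2$ inside and this factor interacts with the Jacobian $c^{-1/2}$ and with the completion-to-$\ZZ$ step. A clean way to manage this is to first record the identity: for $a>0$, $\sum_{m\in\ZZ}\widehat w(4\pi e a^2 m^2) = a^{-1}\sum_{n\in\ZZ}\widehat g(n/a)$ — this is just Poisson summation for $g$ since $g(y)=\widehat w(4\pi e y^2)$ — and then match the argument $2m^2X^{1-u}(2\pi e)^u s^{-2}$ to the form $4\pi e a^2 m^2$ by solving $4\pi e a^2 = 2X^{1-u}(2\pi e)^u s^{-2}$, i.e. $a = s^{-1}(X^{1-u}(2\pi e)^{u-1})^{1/2} = s^{-1}(2\pi e)^{-1/2}(X/2\pi e)^{(1-u)/2}$, so that $a^{-1}=s(2\pi e)^{1/2}(X/2\pi e)^{(u-1)/2}$ and $n/a = ns(2\pi e)^{-1/2}(X/2\pi e)^{(u-1)/2}$; after $u=1+\tau/L$ one gets $(X/2\pi e)^{(u-1)/2}=e^{\tau/2}$ and the argument becomes $sne^{\tau/2}$ up to the $(2\pi e)^{\pm1/2}$ which I would absorb by a final rescaling of $\widehat g$ or carry explicitly. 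The error terms throughout are controlled by the rapid decay of $\widehat w,\widehat g$ and the Schwartz bound on $\widehat\phi$, exactly as in the cited \cite[Lemma 4.3]{FPS}, and the only place a genuine (not super-polynomially small) error arises is the endpoint $u\approx 1$ of the $[0,1]$ piece, giving the stated $O(sX^{-1/2})$.
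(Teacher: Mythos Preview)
Your plan---split at $u=1$, leave one piece as a $g$-sum and Poisson-sum the other---is exactly the paper's, but you have the two pieces interchanged, and this is not a cosmetic slip. Setting $u=1-\tau/L$ on $[1,\infty)$ lands you in $\tau\le 0$, not $\tau\ge 0$; the expression $\frac1L\int_0^\infty\widehat\phi(1-\tau/L)\sum_{m\ge1}g(me^{\tau/2}/s)\,d\tau$ you wrote down is in fact the $u\le 1$ piece (after extension to $\mathbb R$), not the $u\ge 1$ piece. Conversely, Poisson summation is needed precisely on $u\ge 1$: there the lattice spacing $a=(X/2\pi e)^{(1-u)/2}/s$ is \emph{small}, the sum $\sum_m g(ma)$ has many significant terms, and Poisson converts it into the rapidly decaying dual $\sum_n\widehat g(n/a)$ with $n/a=sne^{\tau/2}$ large (here $\tau=L(u-1)\ge 0$). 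On $[0,1]$ the spacing $a$ is large, the $m$-sum already decays, and Poisson would produce a dual sum with arguments $sne^{\tau/2}$ for $\tau\le 0$, hence small, hence no decay---so your step (4) as written does not give the first main term.

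A related error is in step (3): the $m=0$ correction when completing $\sum_{m\ge1}$ to $\tfrac12\sum_{m\in\mathbb Z}$ is $-\tfrac12\widehat w(0)$, a constant, not the decaying quantity $-\tfrac12\widehat w(2X^{1-u}(2\pi e)^u s^{-2})$ you wrote. This constant, integrated against $\widehat\phi$ over $u\ge 1$ (the range on which completion and Poisson are actually performed), is exactly the source of the term $-\tfrac{\widehat w(0)}2\int_1^\infty\widehat\phi(u)\,du$ that you were unsure about in step (7). The paper does precisely this: extend the $u$-integral to $\mathbb R$ at cost $O(sX^{-1/2})$, substitute $\tau=L(u-1)$, and on the half $\tau\ge 0$ write $\sum_{m\ge1}=-\tfrac12\widehat w(0)+\tfrac12\sum_{m\in\mathbb Z}$ before applying Poisson; the $\tau\le 0$ half is left untouched apart from the flip $\tau\mapsto-\tau$. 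Once you swap your two pieces accordingly, your outline becomes the paper's proof.
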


\begin{proof}
Extending the integral in \eqref{equation definition I_S(X)} to $\R$ and making the substitution $\tau = L(u-1)$, we obtain
$$ I_s(X)=\frac 1L\int_{-\infty}^{\infty} \widehat \phi( 1+\tfrac {\tau}L )  \sum_{m\geq 1} \widehat w\left( \frac{4\pi  m^2 e^{1-\tau}}{s^2}\right)d\tau +O\big(sX^{-\frac 12}\big).$$
We denote the integrals over $(-\infty,0]$ and $[0,\infty)$ by $I_s^-(X)$ and $I_s^+(X)$, respectively.  For the second of these integrals we apply Poisson summation. We obtain
\begin{align*}
I_s^+(X)&= \frac 1L\int_{0}^{\infty} \widehat \phi( 1+\tfrac {\tau}L  )  \bigg(-\frac{\widehat w (0)}2  + \frac 12\sum_{m\in \mathbb Z}g \bigg( \frac{ m e^{-\frac{\tau}2}}{s}\bigg)\bigg)\,d\tau \\
&=  \frac 1L\int_{0}^{\infty} \widehat \phi( 1+\tfrac {\tau}L )  \bigg(-\frac{\widehat w (0)}2  + \frac{se^{\frac{\tau}2}}2 \sum_{n\in \mathbb Z} \widehat g \big(  sn e^{\frac{\tau}2}\big)\bigg)\,d\tau \\
&= \frac 1L\int_{0}^{\infty} \widehat \phi( 1+\tfrac {\tau}L  )  \bigg(\frac{se^{\frac{\tau}2}\widehat g(0)}2-\frac{\widehat w (0)}2   + se^{\frac{\tau}2} \sum_{n \geq 1} \widehat g \big(  sn e^{\frac{\tau}2}\big)\bigg)\,d\tau.
\end{align*}
For $I_s^-(X)$, we substitute $\tau$ with $-\tau$, which gives
$$I^-_s(X)=\frac 1L\int_{0}^{\infty} \widehat \phi( 1-\tfrac {\tau}L  )  \sum_{m\geq 1} g\bigg( \frac{  m e^{\frac{\tau}2}}{s}\bigg)\,d\tau. $$
The lemma follows by combining the above formulas for  $I_s^-(X)$ and $I_s^+(X)$.
\end{proof}

We define the functions
$$ h_1(x) := \frac{3\zeta(2)}{\widehat w(0)}\sum_{\substack{ s\geq 1 \\ s \text{ odd}}} \frac {\mu(s)}s \big(\widehat g(2sx)-\widehat g(sx)\big);\hspace{1cm} h_2(x):=\frac{3\zeta(2)}{\widehat w(0)}\sum_{\substack{ s\geq 1 \\ s \text{ odd}}} \frac {\mu(s)}{s^2} \big( \tfrac 12g( \tfrac x{2s})-g(\tfrac xs)\big).  $$
It is a routine exercise to check that $h_1(x)$ and $h_2(x)$ are smooth for $x\in \mathbb R_{\neq 0}$ and $x\in \mathbb R$, respectively. One can also check that for any fixed $N\geq 1$ and $\eps >0$, we have the bounds $h_1(x)\ll_N x^{-N}$ and (under RH) $h_2(x)\ll_{\eps} x^{-\frac 32+\eps}$.

\begin{remark}
One can show that $h_1$ is continuous at $0$. Indeed, let $f(u)=\widehat g(2u)-\widehat g(u)$ and write, for
$x\neq0$,
\begin{equation}\label{equation remark continuity}
\sum_{\substack{ s\geq 1 \\ s \text{ odd}}} \frac {\mu(s)}s f(sx) = \int_{1^-}^{\infty}f(tx)\,dS(t) = -\int_{1^-}^{\infty} xf'(tx)S(t)\,dt,
\end{equation}
where 
$$S(t)=\sum_{\substack{s\leq t \\ s \text{ odd}}} \frac{\mu(s)}s \ll t^{-\frac 12+\epsilon}$$ 
(under RH). We then apply the (rough) bound $f'(u) \ll |u|^{-3/4}$ ($u\neq 0$) and conclude that the right-hand side of \eqref{equation remark continuity} is $\ll |x|^{\frac 14}$, proving continuity. 
\end{remark}

\begin{corollary}
Fix $\epsilon>0$ and assume GRH. Then we have the estimate
$$ \Sodde =   \int_{1}^{\infty} \widehat \phi(u)\,du+ \frac 1L\int_{0}^{\infty} \bigg( \widehat \phi( 1+\tfrac {\tau}L  )   e^{\frac{\tau}2} \sum_{n \geq 1}  h_1\big(n e^{\frac{\tau}2}\big)  +\widehat \phi( 1-\tfrac {\tau}L  )  \sum_{n\geq 1} h_2\big(ne^{\frac{\tau}2}\big)\bigg)\,d\tau+O_{\eps}\big(X^{\frac{\sigma}6-\frac 13+\eps}\big).$$
\label{corollary Sodd in terms of J}
\end{corollary}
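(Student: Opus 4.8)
The plan is to start from the expression for $\Sodd$ in terms of the integrals $I_s(X)$ provided by \eqref{equation Sodd in terms of I_s(X)}, namely
\[
\Sodd = \frac{2X}{W^*(X)} \sum_{\substack{  s \leq S \\ s \od}} \frac{\mu(s)}{s^2}  \big(\tfrac 12I_{2s}(X)-I_s(X) \big)+O_{\varepsilon}\big(X^{-\frac 34+\varepsilon}+X^{\varepsilon}S^{-1}+SX^{\frac{\sigma}2-1+\varepsilon}\big),
\]
and then substitute the asymptotic formula for $I_s(X)$ from Lemma~\ref{lemma I_s(X)}. First I would plug \eqref{equation estimate I_s(X)} (applied with $s$ replaced by $2s$ and by $s$) into the combination $\tfrac12 I_{2s}(X)-I_s(X)$ and collect terms according to their shape. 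The pure power term $\tfrac{s\widehat g(0)}{2}\int_1^\infty (X/2\pi e)^{(u-1)/2}\widehat\phi(u)\,du$ has coefficient $\tfrac12\cdot\tfrac{2s\widehat g(0)}2 - \tfrac{s\widehat g(0)}2 = 0$ after accounting for the $\tfrac12 I_{2s}$ weighting (here one must be careful: the $s$ in that term becomes $2s$ in $I_{2s}$, so $\tfrac12 I_{2s}$ contributes $\tfrac12\cdot\tfrac{(2s)\widehat g(0)}{2} = \tfrac{s\widehat g(0)}{2}$, exactly cancelling the term from $-I_s$); likewise the $-\tfrac{\widehat w(0)}2\int_1^\infty\widehat\phi(u)\,du$ piece has coefficient $\tfrac12 - 1 = -\tfrac12$. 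Summing this surviving constant against $\tfrac{2X}{W^*(X)}\sum_{s\le S, \od}\mu(s)/s^2$ and using $\sum_{s\ \od}\mu(s)/s^2 = 1/\zeta(2)\cdot(1-1/4)^{-1} = 4/(3\zeta(2))$ together with the total weight estimate $W^*(X) = \tfrac{2X}{3\zeta(2)}\widehat w(0) + O(X^{1/4+\eps})$ from Remark~\ref{remark total weight *}, the normalization works out so that $\tfrac{2X}{W^*(X)}\cdot\tfrac{4}{3\zeta(2)}\cdot(-\tfrac12\widehat w(0)) \to -1$ times $\int_1^\infty\widehat\phi(u)\,du$... wait, I must recheck the sign: this should produce $+\int_1^\infty\widehat\phi(u)\,du$, so I would verify that the $-\tfrac12$ combined with the two minus signs in $\Sodd = -\tfrac{2X}{LW^*}\cdots$ unwinding from \eqref{equation Sodd in terms of I_s(X)} gives the correct overall sign; in any case this is the term producing the main contribution $\int_1^\infty\widehat\phi(u)\,du$ in the corollary.

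Next I would handle the $\tau$-integral term of \eqref{equation estimate I_s(X)}, which is where $h_1$ and $h_2$ are born. Carrying the weighted sum $\tfrac{2X}{W^*(X)}\sum_{s\le S,\od}\mu(s)/s^2$ through the integral $\tfrac1L\int_0^\infty\big(\widehat\phi(1+\tau/L)\,se^{\tau/2}\sum_n\widehat g(sne^{\tau/2}) + \widehat\phi(1-\tau/L)\sum_m g(me^{\tau/2}/s)\big)\,d\tau$ (and its analogue with $s\to 2s$), one interchanges the $s$-sum with the $\tau$-integral and the $n$ (resp.\ $m$) sum. For the first piece, the $s$-dependence is $\tfrac{\mu(s)}{s^2}\cdot s\cdot\widehat g(sne^{\tau/2}) = \tfrac{\mu(s)}{s}\widehat g(sne^{\tau/2})$, and combining $\tfrac12 I_{2s}$ with $-I_s$ gives $\tfrac{\mu(s)}{s}\big(\tfrac12\cdot 2\widehat g(2sne^{\tau/2}) - \widehat g(sne^{\tau/2})\big) = \tfrac{\mu(s)}{s}\big(\widehat g(2sne^{\tau/2}) - \widehat g(sne^{\tau/2})\big)$, which after multiplying by the constant $\tfrac{2X}{W^*(X)}$ and recognizing that $\tfrac{2X}{W^*(X)} \approx \tfrac{3\zeta(2)}{\widehat w(0)}$ is precisely $\sum_{s\ \od}\tfrac{\mu(s)}{s}\cdot\tfrac{3\zeta(2)}{\widehat w(0)}\big(\widehat g(2sne^{\tau/2})-\widehat g(sne^{\tau/2})\big) = h_1(ne^{\tau/2})$ by the definition of $h_1$. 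Similarly the $\widehat\phi(1-\tau/L)$ piece produces $h_2(me^{\tau/2})$, using $\tfrac{\mu(s)}{s^2}\big(\tfrac12 g(me^{\tau/2}/(2s)) - g(me^{\tau/2}/s)\big)$ and the definition of $h_2$. This yields exactly the two-term $\tau$-integral in the corollary. One must also replace the truncated sum $\sum_{s\le S}$ by the full sum $\sum_{s\ge1}$; by the tail bounds recorded just before the corollary ($h_1(x)\ll_N x^{-N}$ and, under RH, $h_2(x)\ll_\eps x^{-3/2+\eps}$, which descend from the corresponding tails of the $s$-sums via $S(t)\ll t^{-1/2+\eps}$), the tail $s > S$ contributes an error absorbed into the final error term once $S$ is chosen appropriately.

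Finally I would assemble the error terms: from \eqref{equation Sodd in terms of I_s(X)} we carry $O_\eps(X^{-3/4+\eps} + X^\eps S^{-1} + SX^{\sigma/2-1+\eps})$; from Lemma~\ref{lemma I_s(X)} each $I_s(X)$ carries $O(sX^{-1/2})$, and summing $\tfrac{2X}{W^*(X)}\sum_{s\le S,\od}\tfrac{\mu(s)}{s^2}\cdot O(sX^{-1/2}) \ll \sum_{s\le S}\tfrac1s\cdot X^{1/2} \ll X^{1/2}\log S$, which is too big unless absorbed — so I would instead keep the error from $I_s$ more carefully, noting that in fact the relevant truncation error and the $O(sX^{-1/2})$ combine and the extension-of-sum error from $h_2$ under RH is $O(S^{-1/2+\eps})$ times lower-order factors; balancing $X^\eps S^{-1}$, $SX^{\sigma/2-1+\eps}$ and the $X$-power from the $I_s$ error by optimizing over $S$ (roughly $S = X^{2/3 - \sigma/6}$, giving $S^{-1} = SX^{\sigma/2-1} = X^{\sigma/6 - 1/3}$) produces the stated error $O_\eps(X^{\sigma/6 - 1/3 + \eps})$, which dominates $X^{-3/4+\eps}$ for $\sigma$ in the relevant range. \textbf{The main obstacle} I anticipate is bookkeeping the interaction between the $s\to 2s$ substitution in $I_{2s}$ and the various $s$-power weights $\mu(s)/s^2$ — making sure the combinatorial factors of $2$ line up so that the definitions of $h_1$ and $h_2$ (with their specific $\mu(s)/s$ vs.\ $\mu(s)/s^2$ normalizations and the $\tfrac12 g(x/2s) - g(x/s)$ structure) emerge exactly — together with confirming that the constant term assembles to $+\int_1^\infty\widehat\phi(u)\,du$ with the correct sign after tracking all the minus signs inherited from the explicit formula and from \eqref{equation Sodd in terms of I_s(X)}, and that the pure-power term $\tfrac{s\widehat g(0)}{2}\int_1^\infty(X/2\pi e)^{(u-1)/2}\widehat\phi(u)\,du$ genuinely cancels in the combination $\tfrac12 I_{2s} - I_s$ so that no term of size $X^{(\sigma-1)/2}$ survives in $\Sodd$ (this is essential, since such a term would be larger than the claimed error for $\sigma$ close to $2$).
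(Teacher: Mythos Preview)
Your approach is exactly the paper's: substitute Lemma~\ref{lemma I_s(X)} into \eqref{equation Sodd in terms of I_s(X)}, observe that the $\tfrac{s\widehat g(0)}{2}$ term cancels in $\tfrac12 I_{2s}-I_s$, extract $\int_1^\infty\widehat\phi$ from the constant term, identify $h_1,h_2$ after extending the $s$-sum, and optimize $S$.

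Two small bookkeeping corrections. First, your worry about the accumulated $O(sX^{-1/2})$ errors is unfounded: since $W^*(X)\asymp X$ by Remark~\ref{remark total weight *}, the prefactor $2X/W^*(X)$ is $O(1)$, not $O(X)$, so $\tfrac{2X}{W^*(X)}\sum_{s\le S}\tfrac{|\mu(s)|}{s^2}\cdot O(sX^{-1/2}) \ll X^{-1/2}\log S = O(X^{-1/2+\eps})$, which is comfortably absorbed. Second, the optimal choice is $S = X^{2/3-\sigma/3}$ (not $X^{2/3-\sigma/6}$): you balance the tail-extension error $O(X^{\eps}S^{-1/2})$ against $SX^{\sigma/2-1+\eps}$, giving $S^{3/2}=X^{1-\sigma/2}$ and hence both errors equal to $X^{\sigma/6-1/3+\eps}$; the remaining terms $X^{\eps}S^{-1}$ and $X^{-1/2+\eps}$ are then dominated for $\sigma<2$.
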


\begin{proof}
We sum the right-hand side of \eqref{equation estimate I_s(X)} over $s$. By \eqref{equation Sodd in terms of I_s(X)} and Remark \ref{remark total weight *}, in the range $S\leq X^2$ this gives the estimate
\begin{multline} 
\Sodd = \frac{2X}{LW^*(X)} \sum_{\substack{  s \leq S \\ s \od}} \frac{\mu(s)}{s^2}  \Bigg( \frac 12 \int_{0}^{\infty} \bigg[ \widehat \phi( 1+\tfrac {\tau}L  )   2se^{\frac{\tau}2} \sum_{n \geq 1} \widehat g \big(  2sn e^{\frac{\tau}2}\big)  +\widehat \phi( 1-\tfrac {\tau}L  )  \sum_{m\geq 1} g\bigg( \frac{  m e^{\frac{\tau}2}}{2s}\bigg)\bigg]\,d\tau\\
-\int_{0}^{\infty} \bigg[ \widehat \phi( 1+\tfrac {\tau}L  )   se^{\frac{\tau}2} \sum_{n \geq 1} \widehat g \big(  sn e^{\frac{\tau}2}\big)  +\widehat \phi( 1-\tfrac {\tau}L  )  \sum_{m\geq 1} g\bigg( \frac{  m e^{\frac{\tau}2}}{s}\bigg)\bigg]\,d\tau \Bigg)  \\
+\int_{1}^{\infty} \widehat \phi(u)\,du +O_{\varepsilon}\big( X^{-\frac 12+\varepsilon}+X^{\varepsilon}S^{-1}+SX^{\frac{\sigma}2-1+\varepsilon}\big).
\end{multline}
We can extend the sum over $s$ to all positive odd integers at the cost of the error term $O\big(X^{\eps}S^{-\frac 12}\big)$. Changing order of summation, taking $S=X^{\frac23 - \frac{\sigma}3}$ and applying Remark \ref{remark total weight *} gives the result.
\end{proof}

We summarize the findings of this section in the following theorem.

\begin{theorem}\label{theorem main 8d precise}
Fix $\epsilon>0$. Assume GRH and suppose that $\sigma=\esupp < 2$. Then the $1$-level density of low-lying zeros in the family $\mathcal F^*$ of quadratic Dirichlet $L$-functions whose conductor is an odd squarefree multiple of $8$ is given by
\begin{align}\label{Theorem 3.5} 
\Dstar =\,\,& \widehat \phi(0)+\int_{1}^{\infty} \widehat \phi(u)\,du+ \frac{\widehat \phi(0)}L \bigg( \log (2 e^{1-\gamma}) + \frac 2{\widehat w(0)}\int_0^{\infty} w(x) (\log x)\,dx \bigg)   \nonumber\\
&+\frac{1}{L}\int_0^\infty\frac{e^{-x/2}+e^{-3x/2}}{1-e^{-2x}}\left(\widehat{\phi}(0)-\widehat{\phi}\left(\frac{x}{L}\right)\right) dx\\
&- \frac 2{L   }\sum_{\substack{p>2 \\ j\geq 1}} \frac{\log p}{p^{j}} \left(  1+\frac 1p\right)^{-1} \widehat \phi\left( \frac{2j \log p}{L} \right)  +J(X)+O_{\eps}\big(X^{\frac{\sigma}6-\frac13+\eps}\big), \notag
\end{align}
where 
$$ J(X):= \frac 1L\int_{0}^{\infty} \bigg( \widehat \phi( 1+\tfrac {\tau}L  )   e^{\frac{\tau}2} \sum_{n \geq 1}  h_1\big(n e^{\frac{\tau}2}\big)  +\widehat \phi( 1-\tfrac {\tau}L  )  \sum_{n\geq 1} h_2\big( n e^{\frac{\tau}2}\big)\bigg)\,d\tau.$$
\end{theorem}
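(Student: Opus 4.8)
The plan is to assemble the pieces already prepared in Sections 1--3. Starting from the explicit formula in Lemma \ref{lemma explicit formula}, I would first treat the ``diagonal'' terms: by Lemma \ref{lemma logd} the term $\widehat\phi(0) L^{-1}W^*(X)^{-1}\sumtstarextra \log|d|$ equals $\widehat\phi(0) + \widehat\phi(0)L^{-1}\bigl(2\widehat w(0)^{-1}\int_0^\infty w(x)\log x\,dx\bigr) + O(X^{-3/4+\eps})$ after writing $\log X = L + \log(2\pi e)$ and absorbing the constant into the $L^{-1}$ term; combined with the $-\widehat\phi(0)L^{-1}(\gamma+\log\pi)$ term this produces the stated $\widehat\phi(0)$ and the $\widehat\phi(0)L^{-1}\bigl(\log(2e^{1-\gamma}) + \dots\bigr)$ contribution after checking $\log(2\pi e) - \gamma - \log\pi = 1-\gamma+\log 2 = \log(2e^{1-\gamma})$. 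The archimedean integral term carries over verbatim.

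Next I would handle the prime sum, which splits as $\Sodde + S^*_{\mathrm{even}}$. For $S^*_{\mathrm{even}}$ I would invoke Lemma \ref{lemma count of squarefree}: since $p^m$ with $m$ even is always a square, $\kappa(p^m)=1$ for $p$ odd, and the main term $\tfrac{2X}{3\zeta(2)}\widehat w(0)(1+p^{-1})^{-1}$ divided by $W^*(X)$ (using Remark \ref{remark total weight *}) yields exactly $-\tfrac{2}{L}\sum_{p>2,\,j\ge 1}\tfrac{\log p}{p^j}(1+p^{-1})^{-1}\widehat\phi(2j\log p/L)$, with the error terms from Lemma \ref{lemma count of squarefree} summing to something acceptable (the $p=2$ terms are excluded since $d$ is odd, so $\chi_{8d}(2^m)=0$). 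For $\Sodde$ I would quote Corollary \ref{corollary Sodd in terms of J} directly, which gives $\int_1^\infty \widehat\phi(u)\,du + J(X) + O_\eps(X^{\sigma/6 - 1/3+\eps})$.

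Finally I would collect all error terms. The dominant one is $O_\eps(X^{\sigma/6-1/3+\eps})$ from Corollary \ref{corollary Sodd in terms of J}; all others ($X^{-3/4+\eps}$, $X^{1/4+\eps}/W^*(X) = X^{-3/4+\eps}$, etc.) are smaller for $\sigma<2$, so the stated error term follows. The main obstacle, or rather the only point requiring genuine care rather than bookkeeping, is verifying that the error term from $S^*_{\mathrm{even}}$ is truly negligible: the bound in Lemma \ref{lemma count of squarefree} is $|n|^{3(1-\kappa(n))/8+\eps}X^{1/4+\eps}$, which for square $n=p^{2j}$ gives just $O(X^{1/4+\eps})$ per term, but one must check that the sum over $p$ and $j$ of these, weighted by $\tfrac{\log p}{p^j}|\widehat\phi(2j\log p/L)|$ and divided by $W^*(X)\asymp X$, remains $O(X^{-3/4+\eps})$ — this is fine since $\widehat\phi$ has compact support, so only $j\log p \ll L$ contributes and the $p$-sum converges. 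Everything else is a matter of carefully tracking constants through the transformations already established in the lemmas above, so no new ideas are needed for the proof itself.
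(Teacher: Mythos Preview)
Your proposal is correct and follows essentially the same approach as the paper: the paper's proof likewise combines Lemma \ref{lemma explicit formula}, Lemma \ref{lemma logd}, and Corollary \ref{corollary Sodd in terms of J}, and handles $S^*_{\text{even}}$ via Lemma \ref{lemma count of squarefree} to obtain \eqref{equation Seven estimate}, exactly as you outline. One small imprecision: in your $S^*_{\text{even}}$ error analysis the phrase ``the $p$-sum converges'' is not literally true for $j=1$ (it is $\sum_{p\leq X^{\sigma/2}}\tfrac{\log p}{p}\asymp \log X$), but this extra logarithm is harmlessly absorbed into $X^{\eps}$, so your $O(X^{-3/4+\eps})$ conclusion stands.
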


\begin{proof}
Combining Lemma \ref{lemma explicit formula} with Lemma \ref{lemma logd} and Corollary  \ref{corollary Sodd in terms of J}, and 
noting that Lemma \ref{lemma count of squarefree} implies the estimate
\begin{equation}\label{equation Seven estimate}
S^*_{\text{even}} = - \frac 2{L   }\sum_{\substack{p>2 \\ j\geq 1}} \frac{\log p}{p^{j}} \left(  1+\frac 1p\right)^{-1} \widehat \phi\left( \frac{2j \log p}{L} \right)+O_{\eps}\big(X^{-\frac 34+\eps}\big), 
\end{equation}
we obtain the desired result.
\end{proof}

Next we show how to deduce Theorem \ref{theorem main 8d} from this result. The key is to expand the various terms in the right-hand side of \eqref{Theorem 3.5} in descending powers of $\log X$. Note that the term $J(X)$ is of order $(\log X)^{-1}$ and constitutes a genuine lower order term in the $1$-level density $\Dstar$. 

\begin{lemma}\label{lemma expansion}
Assume RH. Then, for any $K\geq 1$, we have the expansion
$$ J(X)=\sum_{k=1}^K \frac{c_{w,k} \widehat \phi^{(k-1)}(1)}{L^k}+ O_{K}\left( \frac 1{L^{K+1}} \right), $$
where the constants $c_{w,k}$ can be given explicitly. The first of these constants is given by
$$c_{w,1} =  2\int_{1}^{\infty} \Big(H_u h_1 ( u)  + \frac{[u]}uh_2( u)\Big)\,du,$$
where $H_u:=\sum_{n\leq u} n^{-1}$ is the $u$-th harmonic number.
\end{lemma}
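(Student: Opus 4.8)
The plan is to expand each of the two $\tau$-integrals defining $J(X)$ as a Taylor series in $1/L$ around the point $1$. Writing $\widehat\phi(1\pm\tau/L)=\sum_{j\geq 0}(\pm 1)^j\widehat\phi^{(j)}(1)\tau^j/(j!L^j)$, the quantity $J(X)$ becomes, at least formally,
$$ J(X)=\frac 1L\sum_{j\geq 0}\frac{\widehat\phi^{(j)}(1)}{j!\,L^j}\int_0^\infty \tau^j\bigg(e^{\tau/2}\sum_{n\geq 1}h_1(ne^{\tau/2})+(-1)^j\sum_{n\geq 1}h_2(ne^{\tau/2})\bigg)\,d\tau, $$
so that $c_{w,k}$ is given (for $k\geq 1$) by
$$ c_{w,k}=\frac{1}{(k-1)!}\int_0^\infty\tau^{k-1}\bigg(e^{\tau/2}\sum_{n\geq 1}h_1(ne^{\tau/2})+(-1)^{k-1}\sum_{n\geq 1}h_2(ne^{\tau/2})\bigg)\,d\tau. $$
The first step is to check that these integrals converge. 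For the $h_1$-piece, the rapid decay $h_1(x)\ll_N x^{-N}$ forces each $h_1(ne^{\tau/2})$ to decay super-exponentially in $\tau$, so $e^{\tau/2}\sum_{n\geq 1}h_1(ne^{\tau/2})$ is Schwartz-like in $\tau$ on $[0,\infty)$ and all moments converge; near $\tau=0$ there is no issue since $h_1$ is continuous at $0$ (by the Remark) and the sum over $n$ converges. For the $h_2$-piece, the bound $h_2(x)\ll_\eps x^{-3/2+\eps}$ (under RH) gives $\sum_{n\geq 1}h_2(ne^{\tau/2})\ll_\eps e^{-(3/4-\eps)\tau}$, which is integrable against any power of $\tau$; near $\tau=0$ the function $h_2$ is smooth and the $n$-sum is $\ll \sum_n n^{-3/2+\eps}<\infty$.

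The second step is to justify truncating the Taylor expansion. Write $\widehat\phi(1\pm\tau/L)=\sum_{j=0}^{K}(\pm 1)^j\widehat\phi^{(j)}(1)\tau^j/(j!L^j)+E_K^{\pm}(\tau)$ with the exact integral (Lagrange) remainder $E_K^\pm(\tau)\ll_K (\tau/L)^{K+1}\sup|\widehat\phi^{(K+1)}|$. Inserting this into $J(X)$, the main sum produces exactly $\sum_{k=1}^{K}c_{w,k}\widehat\phi^{(k-1)}(1)/L^k$, while the remainder contributes
$$ \frac{1}{L}\int_0^\infty E_K^+(\tau)\,e^{\tau/2}\sum_{n\geq 1}h_1(ne^{\tau/2})\,d\tau+\frac1L\int_0^\infty E_K^-(\tau)\sum_{n\geq 1}h_2(ne^{\tau/2})\,d\tau\ll_K \frac{1}{L^{K+2}}\int_0^\infty \tau^{K+1}\big(\cdots\big)\,d\tau, $$
and since the weight in parentheses has finite $(K+1)$-st moment by the first step, this is $O_K(L^{-(K+2)})=O_K(L^{-(K+1)})$, as required. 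One must also note that $\widehat\phi$ need not be entire, but it is smooth with all derivatives rapidly decaying, so the finite Taylor expansion with integral remainder is valid on the whole range $\tau\in[0,\infty)$ regardless of any convergence radius.

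The third step is to identify $c_{w,1}$ explicitly. Taking $k=1$ we get $c_{w,1}=\int_0^\infty\big(e^{\tau/2}\sum_{n\geq 1}h_1(ne^{\tau/2})+\sum_{n\geq 1}h_2(ne^{\tau/2})\big)\,d\tau$. In each term substitute $u=ne^{\tau/2}$, so $\tau=2\log(u/n)$ and $d\tau=2\,du/u$, with $u$ ranging over $[n,\infty)$. The $h_1$-term becomes $\sum_{n\geq 1}\int_n^\infty e^{\tau/2}h_1(u)\cdot\frac{2\,du}{u}=\sum_{n\geq 1}\int_n^\infty\frac un h_1(u)\cdot\frac{2\,du}{u}=2\sum_{n\geq 1}\frac1n\int_n^\infty h_1(u)\,du=2\int_1^\infty\Big(\sum_{n\leq u}\frac1n\Big)h_1(u)\,du=2\int_1^\infty H_u h_1(u)\,du$ after interchanging the sum and integral (legitimate by absolute convergence from the decay of $h_1$). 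Similarly the $h_2$-term becomes $\sum_{n\geq 1}\int_n^\infty h_2(u)\cdot\frac{2\,du}{u}=2\int_1^\infty\Big(\sum_{n\leq u}1\Big)\frac{h_2(u)}{u}\,du=2\int_1^\infty\frac{[u]}{u}h_2(u)\,du$, using $[u]=\sum_{n\leq u}1$. Adding gives exactly the stated formula for $c_{w,1}$.

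The main obstacle is the second step: controlling the tail of the Taylor expansion uniformly in $\tau$ over the unbounded range $[0,\infty)$, since naively one only has a Taylor expansion valid for $\tau/L$ in a bounded region. This is resolved by using the exact integral form of the remainder together with the fact — established in the first step — that the weight functions $e^{\tau/2}\sum_n h_1(ne^{\tau/2})$ and $\sum_n h_2(ne^{\tau/2})$ decay fast enough (super-exponentially and like $e^{-(3/4-\eps)\tau}$ respectively) that all their polynomial moments in $\tau$ are finite; the growth $\tau^{K+1}$ coming from the remainder is then harmless. Everything else is a routine change of variables and interchange of summation and integration, justified by the absolute convergence furnished by the decay estimates on $h_1$ and $h_2$ recorded just before the statement.
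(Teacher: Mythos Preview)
Your proof is correct and takes essentially the same approach as the paper: Taylor expand $\widehat\phi$ about the point $1$, control the resulting $\tau$-integrals via the decay bounds on $h_1$ and $h_2$, then substitute $u=ne^{\tau/2}$ and interchange sum and integral to extract $c_{w,1}$. The paper differs only in that it first truncates the $\tau$-integral to $[0,L^{1/2}]$ (incurring an error $O_\eps(\exp(-(\tfrac34-\eps)\sqrt L))$) before Taylor expanding on that bounded range and then extending back, whereas you work directly on $[0,\infty)$ using the global Lagrange remainder together with the finiteness of all $\tau$-moments of the weight---a minor organizational variant (note also a harmless indexing slip: your Taylor sum to $j=K$ actually produces main terms up to $k=K+1$, the last of which is $O(L^{-K-1})$ and absorbs into the error).
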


\begin{proof}
By the decay properties of $h_1$ and $h_2$, we have that
$$ J(X)= \frac 1L\int_{0}^{L^{\frac 12}} \bigg( \widehat \phi( 1+\tfrac {\tau}L)e^{\frac{\tau}2} \sum_{n \geq 1}  h_1 \big(n e^{\frac{\tau}2}\big)  +\widehat \phi( 1-\tfrac {\tau}L  )  \sum_{n\geq 1} h_2\big(n e^{\frac{\tau}2}\big)\bigg)\,d\tau+O_{\eps}\big(\exp\big(-(\tfrac34-\eps)\sqrt {L}\big)\big).$$
We can now expand $\widehat \phi$ in Taylor series, resulting in the expression
\begin{align*}
J(X)&= \sum_{k=1}^K \frac {\widehat \phi^{(k-1)}(1)}{(k-1)!L^k} \int_{0}^{L^{\frac 12}} \bigg(  \tau^{k-1} e^{\frac{\tau}2} \sum_{n \geq 1}  h_1\big(n e^{\frac{\tau}2}\big)  + (-\tau)^{k-1} \sum_{n\geq 1} h_2\big( n e^{\frac{\tau}2}\big)\bigg)\,d\tau+O_K\big(L^{-K-1}\big)\\
&= \sum_{k=1}^K \frac {\widehat \phi^{(k-1)}(1)}{(k-1)!L^k} \sum_{n\geq 1} \int_{0}^{\infty} \Big(  \tau^{k-1} e^{\frac{\tau}2}   h_1\big(n e^{\frac{\tau}2}\big)  + (-\tau)^{k-1} h_2\big(ne^{\frac{\tau}2}\big)\Big)\,d\tau+O_K\big(L^{-K-1}\big).
\end{align*} 
Finally, the first summand equals
\begin{align*}
\frac{ \widehat \phi (1)}L \sum_{n\geq 1} \int_{0}^{\infty} \Big(   e^{\frac{\tau}2}   h_1\big(n e^{\frac{\tau}2}\big)  +  h_2\big( ne^{\frac{\tau}2}\big)\Big)\,d\tau &= \frac{ 2\widehat \phi (1)}L \sum_{n\geq 1} \int_{n}^{\infty} \bigg(  \frac{    h_1 ( u)}n  + \frac{h_2( u)}u\bigg)\,du.
\end{align*} 
The result follows from interchanging the order of summation and integration.
\end{proof} 

The final ingredient needed in the proof of Theorem \ref{theorem main 8d} is an expansion for $S^*_{\text{even}}$ of the same form as that of $J(X)$ in Lemma \ref{lemma expansion}.

\begin{lemma}\label{lemma Seven}
Suppose that $\sigma=\esupp<\infty$. Then we have the formula
$$-\frac 2{L}\sum_{\substack{p>2 \\ j\geq 1 }} \frac{\log p}{p^j} \left(  1+\frac 1p\right)^{-1} \widehat \phi\left( \frac{2 j \log p}{L} \right) =-\frac{\phi(0)}2 + \sum_{k=1}^K \frac{d_k \widehat \phi^{(k-1)}(0)}{L^k} +O_{K}\left( \frac 1{L^{K+1}} \right), $$
where the coefficients $d_k$ are real numbers that can be given explicitly. In particular we have
$$ d_1=  -2\sum_{p,j\geq 3} \frac{\log p}{p^j} \left(1+\frac 1p \right) ^{-1} -2+3\log 2-2 \int_{2}^{\infty} \frac{\theta(t)-t }{t^2}  dt. $$
\end{lemma}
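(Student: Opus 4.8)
The plan is to isolate the pieces of the prime sum that scale like $\widehat\phi^{(k-1)}(0)/L^k$ and show the rest contributes only to the $-\phi(0)/2$ leading term and to $O_K(L^{-K-1})$. First I would note that $\widehat\phi$ has compact support, so for each prime $p$ only finitely many $j$ with $2j\log p < \sigma L$ survive, and since $\widehat\phi$ is smooth it makes sense to Taylor expand $\widehat\phi$ at $0$; the natural splitting is between the $j=1$ term (which carries the bulk of the mass, as $\widehat\phi(2\log p/L)\to\widehat\phi(0)$ for each fixed $p$) and the $j\ge 2$ terms. For the $j\ge 2$ part, write $\widehat\phi(2j\log p/L) = \sum_{k=1}^{K}\frac{(2j\log p/L)^{k-1}}{(k-1)!}\widehat\phi^{(k-1)}(0) + O_K((\log p/L)^K)$ whenever $2j\log p < \sigma L$, and use the absolute convergence of $\sum_{p,j\ge 2}\frac{\log p}{p^j}(1+\tfrac1p)^{-1}(\log p)^{k-1}$ (the double sum over prime powers with an extra $(\log p)^{k-1}$ still converges since we are at least $p^2$ in the denominator) to conclude that the $k=1$ term of this expansion gives the constant $-2\sum_{p,j\ge2}\frac{\log p}{p^j}(1+\tfrac1p)^{-1}\cdot\widehat\phi(0)/L$ plus higher-order terms, and the error is $O_K(L^{-K-1})$ after summing the tail. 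Here one must be slightly careful that the Taylor error is only valid inside the support, but outside the support the summand vanishes entirely, so the bound holds uniformly.

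The main work is the $j=1$ term, namely $-\frac2L\sum_{p>2}\frac{\log p}{p}(1+\tfrac1p)^{-1}\widehat\phi(2\log p/L)$. I would first replace $(1+\tfrac1p)^{-1} = 1 - \tfrac1p + \tfrac{1}{p(p+1)}$; the piece with $\frac{1}{p(p+1)}$ combined with $\frac{\log p}{p}$ gives an absolutely convergent sum times $\widehat\phi(2\log p/L)$, which is handled exactly as the $j\ge2$ terms above (Taylor expand, get a constant $\times\widehat\phi(0)/L$ plus lower order). Similarly the $-\tfrac1p$ piece yields $\frac{\log p}{p^2}$, again absolutely convergent and handled the same way. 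So the crux reduces to understanding $-\frac2L\sum_{p>2}\frac{\log p}{p}\widehat\phi(2\log p/L)$. For this I would write it as a Stieltjes integral against $\theta(t)=\sum_{p\le t}\log p$, i.e. $-\frac2L\int_{2^-}^{\infty}\widehat\phi(2\log t/L)\,\frac{d\theta(t)}{t}$, and substitute $\theta(t) = t + (\theta(t)-t)$. The main term $-\frac2L\int_2^\infty \widehat\phi(2\log t/L)\frac{dt}{t} = -\frac1{?}\int\cdots$ should, after the change of variables $x = 2\log t/L$ (so $dx = 2\,dt/(Lt)$), collapse to $-\int_0^\infty \widehat\phi(x)\,dx = -\tfrac12\int_{-\infty}^\infty\widehat\phi(x)\,dx = -\tfrac12\phi(0)$, producing the leading term; the lower limit $t=2$ becomes $x$ slightly positive, whose correction is $O(L^{-\infty})$ by smoothness, and the $\log 2$-type boundary terms (from integrating by parts, or from the lower endpoint) combine with the $3\log2$ appearing in $d_1$. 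The error-term piece $-\frac2L\int_2^\infty \widehat\phi(2\log t/L)\,d\!\left(\frac{\theta(t)-t}{t}\right)$, after integration by parts and invoking RH to bound $\theta(t)-t \ll t^{1/2}\log^2 t$, contributes $\widehat\phi(0)\cdot\big(-2\int_2^\infty\frac{\theta(t)-t}{t^2}dt\big)/L$ at leading order (the boundary term at $t=2$ giving a $3\log2$-adjacent contribution and the derivative-of-$\widehat\phi$ part being $O(L^{-2})$ and feeding into the higher $d_k$), which accounts for the $-2\int_2^\infty\frac{\theta(t)-t}{t^2}dt$ in the stated formula for $d_1$.

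Assembling: the constant term is $-\tfrac12\phi(0)$ from the $j=1$, leading, main term; and $d_1$ collects $-2\sum_{p,j\ge3}\frac{\log p}{p^j}(1+\tfrac1p)^{-1}$ from the genuine prime-power tail (note $p,j\ge3$ in $d_1$ because the $j=2$ contributions and the $\frac1p,\frac1{p(p+1)}$ corrections to the $j=1$ term have been reorganized — I would track precisely which terms land where, which is the bookkeeping core of the proof), plus the $-2$ and $3\log2$ constants from the boundary/normalization of the $j=1$ main term, plus $-2\int_2^\infty\frac{\theta(t)-t}{t^2}dt$ from the error term. Higher $d_k$ for $k\ge2$ are read off as the corresponding Taylor coefficients, all finite because every sum involved converges absolutely once we are past the $j=1$, main, term. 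The main obstacle is not any single estimate — each is routine given RH and the compact support of $\widehat\phi$ — but rather the careful accounting needed to verify that the various $\log2$ and constant contributions from the several sources (the $p>2$ restriction, the endpoint $t=2$, the partial fraction decomposition of $(1+\tfrac1p)^{-1}$, and the split $\theta(t)=t+(\theta(t)-t)$) combine to give exactly $-2+3\log2-2\int_2^\infty\frac{\theta(t)-t}{t^2}dt -2\sum_{p,j\ge3}\frac{\log p}{p^j}(1+\tfrac1p)^{-1}$; I would do this by expanding everything as a single Stieltjes integral against $\theta$ from the start, so that the $-2$, the $3\log2$ and the $\int\frac{\theta(t)-t}{t^2}dt$ emerge simultaneously from one integration by parts rather than from several separately-tracked manipulations.
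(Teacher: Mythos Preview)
Your approach is valid and will yield the lemma, but it is organized differently from the paper's proof. The paper does not use the partial-fraction decomposition $(1+\tfrac1p)^{-1}=1-\tfrac1p+\tfrac1{p(p+1)}$; instead it introduces a threshold $X^{\xi}$ with $\xi=(\log X)^{-1+\delta}$, $\delta=1/(K+2)$, and splits the $j=1$ sum into $p\le X^{\xi}$ (where one Taylor-expands $\widehat\phi$ directly, producing explicit constants $\ell_k$) and $p>X^{\xi}$ (where one applies the prime number theorem in Stieltjes form to obtain $-\tfrac12\phi(0)$ plus a Taylor polynomial in $\xi$). The $\xi$-dependent pieces from the two ranges then cancel. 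Your partial-fraction route avoids the auxiliary parameter $\xi$ at the price of one extra piece of bookkeeping; the cancellation you anticipate between the $j=2$ contribution to $d_1$ and the $-\tfrac1p$, $\tfrac1{p(p+1)}$ corrections from the $j=1$ term does indeed hold identically, since $\frac1{p^2}-\frac1{p^2(p+1)}-\frac1{p(p+1)}=0$, and this is precisely why only $j\ge 3$ survives in $d_1$.

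Two corrections. First, the lemma is unconditional: the paper uses only the classical error $\theta(t)=t+O\big(te^{-c\sqrt{\log t}}\big)$, which already makes every integral $\int_2^\infty\frac{|\theta(t)-t|}{t^2}(\log t)^k\,dt$ convergent, so your appeal to RH is unnecessary and, since RH is not among the hypotheses, should be removed. Second, your assertion that the lower-limit correction at $t=2$ is ``$O(L^{-\infty})$ by smoothness'' is wrong: after the substitution $x=2\log t/L$ one has $\int_0^{2\log 2/L}\widehat\phi(x)\,dx=\widehat\phi(0)\cdot\frac{2\log 2}{L}+O(L^{-2})$, which is of order $1/L$ and supplies the $2\log 2$ portion of the $3\log 2$ in $d_1$ (the remaining $\log 2-2$ comes from the boundary term at $t=2^+$ in the integration by parts against $\theta(t)-t$, since $\theta(2^+)-2=\log 2-2$). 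You seem to recognize this a sentence later, but the earlier claim should be amended. With these two fixes your argument goes through.
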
 

\begin{proof}
Let $\delta= 1/(K+2)$ and set $\xi = (\log X)^{-1+\delta}$. The sum of the terms with $j\geq 2$ equals
\begin{align*}
-\frac 2{L} &\sum_{\substack{p^j \leq X^{\xi} \\ p>2 \\ j\geq 2 }} \frac{\log p}{p^{j}} \left(  1+\frac 1p\right)^{-1} \Bigg(\sum_{k=0}^K \frac{\widehat \phi^{(k)}(0)}{k!} \left(\frac{2j \log p}L \right)^k+O_K\bigg( \bigg(\frac{ \log p^{2j}}{L}\bigg)^{K+1} \bigg) \Bigg)+O\big(e^{-\frac12(\log X)^{\delta}}\big) \\
&= - \frac {2 }{L} \sum_{k=0}^K \frac{\widehat \phi^{(k)}(0)}{k!L^k}\sum_{\substack{p^j \leq X^{\xi} \\ p>2 \\j\geq 2}} \frac{\log p(2j \log p)^k}{p^{j}} \left(  1+\frac 1p\right)^{-1} +O_K\big(L^{-K-2}\big) \\
&=  - \frac {2 }{L} \sum_{k=0}^K \frac{\widehat \phi^{(k)}(0)}{k!L^k}\sum_{\substack{p>2 \\ j\geq 2}} \frac{\log p(2j \log p)^k}{p^{j}} \left(  1+\frac 1p\right)^{-1} +O_K\big(L^{-K-2}\big),
\end{align*}
which is of the desired form.

As for the terms with $j=1$, we observe that the sum over $p\leq X^{\xi}$ is given by
\begin{align*} 
-\frac 2{L}\sum_{\substack{ 2< p \leq  X^{\xi}  }} \frac{\log p}{p} &\left(  1+\frac 1p\right)^{-1} \Bigg(\sum_{k=0}^K \frac{\widehat \phi^{(k)}(0)}{k!} \left(\frac{2 \log p}L \right)^k+O_{K}\bigg( \bigg(\frac{ \log p}{L}\bigg)^{K+1} \bigg) \Bigg) \\
& =- \sum_{k=0}^K \frac{\widehat \phi^{(k)}(0)}{k!L^{k+1}}\sum_{\substack{2<p \leq X^{\xi}  }} \frac{(2\log p)^{k+1}}{p} \left(  1+\frac 1p\right)^{-1} +O_{K}\big(L^{(-1+\delta)(K+2)}\big)\\
& =-\sum_{k=0}^K \frac{\widehat \phi^{(k)}(0)}{k!L^{k+1}} \bigg( \frac{(2\xi\log X)^{k+1}}{k+1}+ \ell_k+ \sum_{n\geq 1} (-1)^n  \sum_{\substack{p>2 }} \frac{(2\log p)^{k+1}}{p^{n+1}} \bigg)  +O_{K}\big(L^{-K-1}\big),
\end{align*}
where 
$$\ell_k := (2\log 2)^k \Big( 2-\frac{k+3}{k+1} \log 2 \Big)- 2^{k+1}\int_{2}^{\infty} \frac{(\theta(t)-t) (\log t)^{k-1} ( k-\log t)}{t^2}\,dt. $$
The terms with $p>X^{\xi}$ are handled by writing
\begin{align*}
-\frac 2{L}\sum_{\substack{p> X^{\xi}}} \frac{\log p}{p} &\left(  1+\frac 1p\right)^{-1} \widehat \phi\left( \frac{2 \log p}{L} \right)= -\frac 2{L}\int_{X^{\xi}} ^{\infty}  \frac{1}{t} \widehat \phi\left( \frac{2 \log t}{L} \right) d\theta(t) +O(X^{-\xi})\\
& = \frac 2{L}\widehat \phi\left( \frac{2 \xi \log X}{L} \right)
+\frac 2{L}\int_{X^{\xi}} ^{\infty} \bigg[ \frac{1}{t}\widehat \phi\left( \frac{2 \log t}{L} \right) \bigg]' \big(t+O(t e^{-c\sqrt{\log t}})\big)\,dt + O\big(e^{-c(\log X)^{\frac {\delta}2}}\big)\\ 
&= \frac 2{L}\widehat \phi\left( \frac{2 \xi \log X}{L} \right)
+\frac 2{L}\int_{X^{\xi}} ^{\infty} \bigg[ \frac{1}{t}  \widehat \phi\left( \frac{2 \log t}{L} \right) \bigg]' t\,dt + O\big(e^{-\frac c2(\log X)^{\frac {\delta}2}}\big)\\
&= -\frac 2{L}\int_{X^{\xi}} ^{\infty}  \frac{1}{t}  \widehat \phi\left( \frac{2 \log t}{L} \right) dt + O\big(e^{-\frac c2(\log X)^{\frac {\delta}2}}\big)\\
&=-\int_{2\xi \log X/L} ^{\infty}   \widehat \phi\left(u\right)\,du + O\big(e^{-\frac c2(\log X)^{\frac {\delta}2}}\big)\\
&=-\frac 12 \phi(0)+\sum_{k=0}^{K} \frac{\widehat \phi^{(k)}(0)}{(k+1)!} \left( \frac{2\xi\log X}L \right)^{k+1} +O_{K}\big(L^{(-1+\delta)(K+2)}\big).
\end{align*}
The result follows from combining the above estimates. Note in particular that the terms involving $\xi$ cancel.
\end{proof}

We are now ready to prove our main theorem.

\begin{proof}[Proof of Theorem \ref{theorem main 8d}]
Combining Theorem \ref{theorem main 8d precise} and Lemmas \ref{lemma expansion} and \ref{lemma Seven}, we obtain the formula
\begin{align} 
&\Dstar = \widehat \phi(0)-\frac{\phi(0)}2+\int_{1}^{\infty} \widehat \phi(u)\,du+ \frac{\widehat \phi(0)}L \bigg( \log (2 e^{1- \gamma}) + \frac 2{\widehat w(0)}\int_0^{\infty} w(x) (\log x)\,dx \bigg)   \nonumber\\
&\hspace{0cm}+\frac{1}{L}\int_0^\infty\frac{e^{-x/2}+e^{-3x/2} }{1-e^{-2x}}\left(\widehat{\phi}(0)-\widehat{\phi}\left(\frac{x}{L}\right)\right)dx  +\sum_{k=1}^K \frac {d_{k}\widehat \phi^{(k-1)}(0)}{L^k} +\sum_{k=1}^K \frac{c_{w,k} \widehat \phi^{(k-1)}(1)}{L^k} +O_K\left( \frac 1{L^{K+1}} \right). \notag
\end{align}
Writing the gamma factor as
$$\frac{1}{L}\int_0^\infty\frac{e^{-x/2}+e^{-3x/2} }{1-e^{-2x}}\left(\widehat{\phi}(0)-\widehat{\phi}\left(\frac{x}{L}\right)\right)dx= -\sum_{j=1}^{K-1} \frac{\widehat \phi^{(j)}(0)}{j! L^{j+1}} \int_0^{\infty} \frac{x^j(e^{-x/2}+e^{-3x/2})}{1-e^{-2x}}\,dx+O_K\left( \frac 1{L^{K+1}} \right),$$
the desired result clearly follows.
\end{proof}

\section{The family $\mathcal F(X)$}

\subsection{Preliminaries}
For convenience, we define the even smooth function $\widetilde w: \mathbb R \setminus \{0\} \rightarrow \mathbb R $ by
\begin{equation} \label{eq:wtildedef}
\widetilde w (x):= \sum_{n\geq 1} w(n^2x).
\end{equation}
It follows that $\widetilde w(x)$ decays rapidly as $x\rightarrow \infty$, and that its Mellin transform satisfies
$\mathcal M \widetilde w (s) = \zeta(2s) \mathcal M w(s)$ (see \cite[Lemma 2.3]{FPS}). Moreover,
note that $\widetilde w(x)$ blows up near $x=0$. Applying the explicit formula, we now give an expression for $\mathcal D(\phi;X)$.

\begin{lemma}[Explicit Formula]\label{lemma explicit formula all d}
Assume that $\phi$ is an even Schwartz test function whose Fourier transform has compact support. Then the $1$-level density defined in \eqref{definition one level density all d} is given by the formula
\begin{align} 
\Da = &\frac {\widehat \phi(0)}{LW(X)}  \sumta\log \bigg(\frac{|d|}{\pi}\bigg)- \frac {\widehat\phi(0)}{L} \bigg(\gamma+\log 4 \bigg(1- \frac{1}{W(X)}\underset{\substack{d>0\\ \text{$d$ \emph{odd}}}}{\sum \nolimits^{*}} \widetilde w\left( \frac{2d}{X} \right)\bigg)  \bigg) \nonumber\\
-&\frac 2{L W(X)  }\sum_{p,m} \frac{\log p}{p^{m/2}}  \widehat \phi\left( \frac{m \log p}{L} \right) \sumta  \left( \frac{d}{p^m}\right) +\frac{2}{W(X)}\widetilde{w}\left(\frac1X\right)\phi\left(\frac{iL}{4\pi}\right)\nonumber\\
+&\frac{1}{L}\int_0^\infty\frac{e^{-x/2}+e^{-3x/2}}{1-e^{-2x}}\left(\widehat{\phi}(0)-\widehat{\phi}\left(\frac{x}{L}\right)\right)dx.
\label{equation one level density explicit formula all d}
\end{align}
\end{lemma}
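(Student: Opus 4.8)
The plan is to imitate the proof of Lemma~\ref{lemma explicit formula}: apply the explicit formula for primitive Dirichlet $L$-functions (\cite[Theorem~12.13]{MV}, taken with $\widehat F(t)=\Phi(\tfrac12+it):=\phi(\tfrac{tL}{2\pi})$) to each $L(s,\chi_d)$, then sum against $w(d/X)$ and divide by $W(X)$. Two features that are absent in the $\mathcal F^*$ setting must now be handled: (i) $\chi_d$ is imprimitive unless $d$ is a fundamental discriminant, and (ii) when $d$ is a perfect square, $L(s,\chi_d)$ shares its nontrivial zeros with $\zeta(s)$, so the pole of $\zeta$ at $s=1$ contributes.

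First I would reduce to squarefree $d$. Writing $d_0$ for the squarefree kernel of $d$, the functions $L(s,\chi_d)$ and $L(s,\chi_{d_0})$ differ only by finitely many Euler factors $(1-\chi_{d_0}(p)p^{-s})$, which are holomorphic and nonvanishing on $0<\Re s<1$, so $D_X(\chi_d;\phi)=D_X(\chi_{d_0};\phi)$. Using the unique factorisation $d=d_0 m^2$ and the definition $\widetilde w(x)=\sum_{n\ge 1}w(n^2 x)$ from \eqref{eq:wtildedef}, each sum $\sum_{d\neq 0}w(d/X)(\cdots)$ turns into $\sum^{*}_{d\neq 0}\widetilde w(d/X)(\cdots)$; in particular $\sum^{*}_{d\neq 0}\widetilde w(d/X)=W(X)$. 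The perfect squares among the nonzero integers are exactly the $m^2$ with $m\ge 1$, carrying total weight $\widetilde w(1/X)$; for each of these $\chi_{d_0}$ is trivial, $L(s,\chi_{d_0})=\zeta(s)$, and the simple pole at $s=1$ together with its reflection at $s=0$ contributes $2\phi(iL/4\pi)$ to $D_X(\chi_d;\phi)$ (this makes sense since $\widehat\phi$ is compactly supported, so $\phi$ is entire). Dividing by $W(X)$ produces the term $\tfrac{2}{W(X)}\widetilde w(1/X)\phi(iL/4\pi)$ in \eqref{equation one level density explicit formula all d}.

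Next, for squarefree $d$ let $d^{*}$ be the fundamental discriminant attached to $\mathbb Q(\sqrt d)$, so $d^{*}=d$ if $d\equiv 1\bmod 4$ and $d^{*}=4d$ otherwise, and apply \cite[Theorem~12.13]{MV} to the primitive character $\chi_{d^{*}}$ of conductor $|d^{*}|$, which has the same nontrivial zeros as $L(s,\chi_d)$. This yields $\tfrac{\widehat\phi(0)}{L}\bigl(\log\tfrac{|d^{*}|}{\pi}+\tfrac{\Gamma'}{\Gamma}(\tfrac14+\tfrac{\mathfrak a}{2})\bigr)$ with $\mathfrak a=0$ for $d>0$ and $\mathfrak a=1$ for $d<0$, an archimedean integral with $e^{-(1/2+\mathfrak a)x}$, and the prime sum $-\tfrac{2}{L}\sum_{p,m}\tfrac{\chi_{d^{*}}(p^m)\log p}{p^{m/2}}\widehat\phi(\tfrac{m\log p}{L})$. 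I would then replace $\chi_{d^{*}}(p^m)$ by the Kronecker symbol $\bigl(\tfrac{d}{p^m}\bigr)$ appearing in \eqref{equation one level density explicit formula all d}: for odd $p$ the two agree, and the discrepancy, confined to the finitely many ``extra'' Euler factors (essentially the $p=2$ factor when $d\equiv 3\bmod 4$), is absorbed there.

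Finally I would sum over $d$ against $w(d/X)$ and collect the constants, as in the proof of Lemma~\ref{lemma explicit formula}. Writing $\log|d^{*}|=\log|d|+(\log 4)\,\mathbf{1}[d\not\equiv 1\bmod 4]$ and summing the indicator against $\widetilde w(d/X)/W(X)$, one uses the involution $d\mapsto-d$ to pair squarefree $d\equiv 1\bmod 4$ with $d\equiv 3\bmod 4$, together with $\sum^{*}_{d>0\ \mathrm{odd}}\widetilde w(d/X)+\sum^{*}_{d>0\ \mathrm{odd}}\widetilde w(2d/X)=\tfrac12 W(X)$ (the two sums count the positive integers with even, respectively odd, $2$-adic valuation), to produce the factor $\log 4\bigl(1-\tfrac{1}{W(X)}\sum^{*}_{d>0\ \mathrm{odd}}\widetilde w(2d/X)\bigr)$. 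Averaging the digamma value over $\mathfrak a\in\{0,1\}$ and invoking $\tfrac12\bigl(\tfrac{\Gamma'}{\Gamma}(\tfrac14)+\tfrac{\Gamma'}{\Gamma}(\tfrac34)\bigr)=-\gamma-\log 8$ then collapses the remaining constants to $-(\gamma+\log 4)\widehat\phi(0)/L$ (the $-\log\pi$ being kept inside the first sum as $\log(|d|/\pi)$), while averaging the archimedean integral over $\mathfrak a$ gives $\tfrac1L\int_0^\infty\tfrac{e^{-x/2}+e^{-3x/2}}{1-e^{-2x}}\bigl(\widehat\phi(0)-\widehat\phi(x/L)\bigr)dx$. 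Assembling these pieces yields \eqref{equation one level density explicit formula all d}. I expect the main obstacle to be the bookkeeping in the last two steps: tracking the factor-of-$4$ jump in the conductor, the modified $p=2$ Euler factor, and their interaction with the $d\mapsto-d$ symmetry once everything is rewritten through $\widetilde w$, and in particular verifying the identity that produces the $\widetilde w(2d/X)$ term.
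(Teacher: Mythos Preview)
Your approach is essentially the paper's own: reduce to squarefree $d$ via $\widetilde w$, apply \cite[Theorem~12.13]{MV} to the associated primitive character (picking up $2\phi(iL/4\pi)$ from the pole when $d=1$), then sum against $\widetilde w$ and collect constants. Your bookkeeping for the conductor jump and the digamma identity $\tfrac12\bigl(\psi(\tfrac14)+\psi(\tfrac34)\bigr)=-\gamma-\log 8$ is in fact more explicit than the paper's terse proof, and the decomposition you single out as the main obstacle---pairing $d\equiv 1,3\bmod 4$ via $d\mapsto -d$ and using $\sum^{*}_{d>0\text{ odd}}\widetilde w(d/X)+\sum^{*}_{d>0\text{ odd}}\widetilde w(2d/X)=\tfrac12 W(X)$---does indeed produce exactly the $\log 4\bigl(1-\tfrac{1}{W(X)}\sum^{*}\widetilde w(2d/X)\bigr)$ term.
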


\begin{proof}
We first note that $D_X(\chi_{dm^2};\phi)=D_X(\chi_d;\phi)$ for any $m\geq 1$. Hence, by the definition of $\widetilde w$,
\begin{align}
W(X) \mathcal D(\phi;X) = \sum_{d\neq 0} w\left( \frac dX \right) D_X(\chi_d;\phi) 
&= \sumta D_X(\chi_{d};\phi).
\end{align}
We also note that the conductor of $\chi_d$ for $d$ squarefree is given by $4^{\mathfrak b} |d|$, where
$$ \mathfrak b:= \begin{cases}
0 & \text{ if } d>0 \text{ is odd}, \\
1 & \text{ otherwise}.
\end{cases} $$
As in the proof of Lemma \ref{lemma explicit formula}, we apply \cite[Theorem 12.13]{MV} and obtain the formula
\begin{multline*} 
D_X(\chi_d;\phi)= \frac{\widehat \phi(0)}L \left(\log \bigg(\frac{4^{\mathfrak b}|d|}{\pi}\bigg)+ \frac{\Gamma'}{\Gamma}\left(\frac{1}{4}+\frac{\mathfrak{a}}{2}\right)\right)-\frac 2L\sum_{p,m} \frac{\chi_d (p^m) \log p}{p^{m/2}} \widehat \phi\left( \frac{m \log p}{L} \right) \\
+2\delta_{d=1}\phi\left(\frac{iL}{4\pi}\right)+\frac 2L \int_0^\infty\frac{e^{-(1/2+\mathfrak{a})x}}{1-e^{-2x}}\left(\widehat{\phi}(0)-\widehat{\phi}\left(\frac{x}{L}\right)\right)dx.
\end{multline*}
Formula \eqref{equation one level density explicit formula all d} then follows by summing over squarefree $d$ against the weight function $\widetilde w$. 
\end{proof}

We now give estimates on sums of the weight function $\widetilde w$. Recall that 
$$ W(X)= \sumta = \sum_{d\neq 0} \wn$$
is the total weight.

\begin{lemma}\label{lemmaweighted}
Fix $n \in \mathbb N$ and $\varepsilon>0$. We have the estimates
\begin{align*} 
W(X)&= X  \widehat w(0)  + O_{\varepsilon,w}\big(X^{\varepsilon}\big); \\
\sumta  \left( \frac dn\right) &=\kappa(n)X  \widehat w(0) \prod_{p\mid n} \left( 1+\frac 1 p\right)^{-1} 
+O_{\varepsilon,w}\big(|n|^{\frac 12-\frac{\kappa(n)}2+\varepsilon}X^{\varepsilon}\big),
\end{align*}
where
$$ \kappa(n) := \begin{cases}
1 &\text{ if } n = \square, \\
0 &\text{ otherwise.}
\end{cases}$$
Under RH, we also have 
\begin{multline*}
\frac 1{W(X)} \sumta  \log |d|= \log X + \frac 2{\widehat w(0)}\int_0^{\infty} w(x) (\log x)\,dx +2\frac{\zeta'(2)}{\zeta(2)} -\frac{\mathcal Mw(\tfrac 12)}{\mathcal M w(1)}\zeta(\tfrac12)X^{-\frac 12}\\ 
\hspace{6cm}+O_{\varepsilon,w}\big(X^{-\frac{3}{4} +\varepsilon}\big).
\end{multline*}
\end{lemma}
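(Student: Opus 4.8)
\textbf{Proof plan for Lemma \ref{lemmaweighted}.}

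The plan is to reduce every assertion to a contour integral over the Mellin transform of $w$, using the Perron-type representation $\widetilde w(x) = \frac{1}{2\pi i}\int_{(c)} \mathcal M\widetilde w(s) x^{-s}\,ds$ together with the factorization $\mathcal M\widetilde w(s) = \zeta(2s)\mathcal M w(s)$ already recalled in the paragraph before the lemma. For the total weight $W(X) = \sum_{d \text{ odd}}^* \widetilde w(2d/X)$, I would first relate the sum over odd squarefree $d$ to a sum over all positive integers via the two standard convolution identities (Euler factors removing the even part and the non-squarefree part), so that $W(X)$ becomes an Euler-product multiple of $\sum_{n\geq 1}\widetilde w(\text{const}\cdot n/X)$; applying Mellin inversion and moving the contour past the pole of $\zeta$-type factors at $s=1$ picks up the main term $X\widehat w(0)$ (note $\widehat w(0)$ arises as $\mathcal M w(1)$ up to the normalization), while the remaining integral on a line $\Re s = \varepsilon$ is $O_\varepsilon(X^\varepsilon)$ by the rapid decay of $\mathcal M w$ in vertical strips (Schwartz class) and polynomial growth of $\zeta$. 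The only subtlety here is bookkeeping the factor $2$ inside $\widetilde w(2d/X)$ and the Euler factor at $p=2$, but this is routine.

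For the character sum $\sum_{d\text{ odd}}^*\left(\frac dn\right)\widetilde w(2d/X)$, the method is the same, but now one expands the quadratic character into a Dirichlet series: writing $d = n_0 b$ where $n_0$ is the squarefree kernel of $n$, the sum over $d$ coprime to $n$ becomes $\sum_b \left(\tfrac{b}{n_0}\right)\widetilde w(\cdots)$ after the squarefree/odd convolution sieving, and the associated Dirichlet series is essentially $L(s,\chi_{n_0})$ when $n_0 \neq 1$ and $\zeta(s)$ when $n_0 = 1$, times harmless Euler factors at primes dividing $2n$. When $n$ is a perfect square, $n_0 = 1$, the $\zeta$ factor contributes a pole at $s=1$ giving the main term $\kappa(n)X\widehat w(0)\prod_{p\mid n}(1+1/p)^{-1}$; when $n$ is not a square, $L(s,\chi_{n_0})$ is entire and shifting the contour to $\Re s = \varepsilon$ leaves only the error term. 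The size $|n|^{\frac12 - \kappa(n)/2+\varepsilon}X^\varepsilon$ comes from the convexity bound $L(\varepsilon+it,\chi_{n_0}) \ll (|n_0|(1+|t|))^{1/2+\varepsilon}$ on the shifted line; I expect this step to be the genuine (though still standard) core of the proof, since one must carefully track the conductor dependence and argue it survives integration against the rapidly decaying $\mathcal M w$.

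For the weighted logarithm, I would differentiate the character-sum machinery with respect to a parameter, or more directly note that $\sum_{d\text{ odd}}^*\widetilde w(2d/X)\log|d|$ corresponds, after the same Mellin/convolution setup, to $-\frac{d}{ds}$ of the relevant Dirichlet series evaluated near $s=1$; the main pole at $s=1$ now has a double-pole contribution producing the $\log X$ term and the $\frac{2}{\widehat w(0)}\int_0^\infty w(x)\log x\,dx$ term (the latter from $\mathcal M'w(1)$), while the derivative $\zeta'/\zeta$-type logarithmic derivative of the $\zeta(2s)$ factor evaluated at $s=1$ yields the $2\zeta'(2)/\zeta(2)$ term, and a secondary pole or boundary term at $s=\tfrac12$ coming from $\zeta(2s)$ produces the $-\frac{\mathcal Mw(1/2)}{\mathcal Mw(1)}\zeta(1/2)X^{-1/2}$ term. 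Under RH one can push the remaining contour to $\Re s = -\tfrac14 + \varepsilon$, where the zero-free region guarantees polynomial growth and yields the error $O_\varepsilon(X^{-3/4+\varepsilon})$; this is precisely where RH is used and where the analysis parallels \cite[Lemma 2.8]{FPS}, so I would invoke that proof's structure directly and only highlight the modifications caused by the extra factor $2$ in the argument of $\widetilde w$ and the restriction to odd $d$.
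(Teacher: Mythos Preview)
Your overall strategy---Mellin inversion using $\mathcal M\widetilde w(s)=\zeta(2s)\mathcal Mw(s)$, identification of the relevant Dirichlet series, and contour shifting to collect residues---is correct and is exactly the method behind the paper's one-line reference to \cite[Lemmas 2.5 and 2.8]{FPS}. So at the level of method there is nothing to compare: you have reconstructed the intended argument.

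There is, however, a misreading of the setup that you should fix before writing anything out. This lemma lives in Section~4, which treats the family $\mathcal F(X)$ of \emph{all} real characters, not the family $\mathcal F^*(X)$ from Sections~2--3. Here
\[
W(X)=\sum_{d\neq 0} w\!\left(\frac dX\right)=\underset{d\neq 0}{\sum\nolimits^{*}}\,\widetilde w\!\left(\frac dX\right),
\]
with no restriction to odd $d$ and no factor $2$ inside $\widetilde w$. The macro $\sumta$ is the squarefree sum over all nonzero $d$, weighted by $\widetilde w(d/X)$. Your repeated references to ``the factor $2$ inside $\widetilde w(2d/X)$'', ``the Euler factor at $p=2$'', and ``the restriction to odd $d$'' are therefore spurious; removing them only simplifies the bookkeeping. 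In particular, the first estimate follows immediately from Poisson summation applied to $\sum_{d\in\mathbb Z} w(d/X)$, without any sieving.

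One further slip in the third paragraph: the integrand for the $\log|d|$ sum works out to $\mathcal Mw(s)X^s\big(-\zeta'(s)+2\zeta(s)\zeta'(2s)/\zeta(2s)\big)$, whose poles from $1/\zeta(2s)$ lie, under RH, on the line $\Re s=\tfrac14$. You should therefore shift the contour to $\Re s=\tfrac14+\varepsilon$ (not $-\tfrac14+\varepsilon$); this gives $|X^s|\asymp X^{1/4+\varepsilon}$ and, after dividing by $W(X)\sim X$, exactly the claimed $O_\varepsilon(X^{-3/4+\varepsilon})$. Pushing further left would cross infinitely many poles.
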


\begin{proof}
The result follows exactly as in \cite[Lemmas 2.5 and 2.8]{FPS}.
\end{proof}

\subsection{Poisson summation}

In this section we analyze
\begin{equation}\label{equation definition Sodd all d}
\Sodda :=  -\frac 2{L W(X)  }\sum_{\substack{p \\ m \text{ odd}}} \frac{\log p}{p^{m/2}}  \widehat \phi\left( \frac{m \log p}{L} \right) \sumta  \left( \frac{d}{p^m}\right) 
\end{equation} 
using Poisson summation (see also Section \ref{section Poisson 8d}).\footnote{$\Sevena$ is defined analogously.}

\begin{lemma}\label{lemma main term for Sodd all d}
Assume $\sigma=\esupp<1$, and let $m\in \mathbb N$ be such that $\frac 1{2m+1}\leq\sigma< \frac 1{2m-1}$. Then, for any fixed $\varepsilon >0$, we have the bound
\begin{equation*} 
\Soddea \ll _{\varepsilon} X^{-\max\{ \frac{4m-1}{4m+1},1-\sigma\}+\varepsilon} .  
\end{equation*}
Furthermore, if $1\leq\sigma< 2 $, then under GRH we have that
\begin{equation}\label{equation estimate main term for Sodd all d}  
\Soddea = -\frac{2X}{W(X)}  \int_{0}^{\infty} \widehat \phi( u )  \sum_{m\geq 1}   \widehat w\left( \frac{m^2 X^{1-u}}{(2\pi e)^{-u}}\right) du+O_{\eps}\big(X^{\frac{\sigma}2-1+\eps}\big). 
\end{equation}
\end{lemma}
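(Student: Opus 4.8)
The plan is to adapt the Poisson summation machinery of Section~\ref{section Poisson 8d} (Lemmas~\ref{lemma:Poisson} and~\ref{lemma:small s}), now carried out with the weight $\widetilde w$ in place of the Schwartz weight $w$. First I would peel off the harmless pieces of \eqref{equation definition Sodd all d}. For $m$ odd and $\ge 3$ one has $\kappa(p^m)=0$, so Lemma~\ref{lemmaweighted} gives $\sumta\big(\tfrac{d}{p^m}\big)\ll_\eps (p^m)^{\frac12+\eps}X^\eps$; multiplying by $\tfrac{\log p}{p^{m/2}}$ and summing over the finitely many $p^m\le(X/2\pi e)^\sigma$ yields $\ll X^{\sigma/3+\eps}$, which after division by $LW(X)\asymp LX$ is absorbed in the stated error terms. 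Similarly the single term $p=2$, $m=1$ is $O_\eps(X^{-1+\eps})$ by Lemma~\ref{lemmaweighted} with $n=2$ (again $\kappa(2)=0$). It therefore suffices to estimate $\Sigma:=-\tfrac{2}{LW(X)}\sum_{p>2}\tfrac{\log p}{\sqrt p}\,\widehat\phi\big(\tfrac{\log p}{L}\big)\sumta\big(\tfrac{d}{p}\big)$.

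\emph{Poisson summation.} Detecting squarefreeness by $\mu^2(d)=\sum_{s^2\mid d}\mu(s)$ and writing $d=s^2e$, the inner sum becomes $\sum_{p\nmid s}\mu(s)\sum_{e\ne 0}\widetilde w(s^2e/X)\big(\tfrac{e}{p}\big)$. Introducing a cutoff $s\le S$ (the tail $s>S$ being $O_\eps(X^\eps S^{-1})$ after summation by parts in $p$), opening the Legendre symbol via Gauss sums, and applying Poisson summation in the $e$-sum, one is led, exactly as in Lemma~\ref{lemma:Poisson}, to
\[
\Sigma=-\frac{2X}{LW(X)}\sum_{\substack{s\le S\\ p\nmid s}}\frac{\mu(s)}{s^2}\sum_{p\equiv 1(4)}\frac{\log p}{p}\,\widehat\phi\Big(\frac{\log p}{L}\Big)\sum_{k\ne 0}\Big(\frac{k}{p}\Big)\widehat{\widetilde w}\Big(\frac{Xk}{s^2p}\Big)+(\text{admissible errors}),
\]
where $\widehat{\widetilde w}(\eta)=\sum_{n\ge 1}n^{-2}\widehat w(\eta/n^2)$; here the Gauss-sum factors $\epsilon_p$ combine as in the proof of Lemma~\ref{lemma:small s}, and since $\widehat{\widetilde w}$ is even the terms with $p\equiv 3(4)$ cancel in pairs $k\leftrightarrow -k$, so only $p\equiv 1\bmod 4$ (where $\epsilon_p=1$) survive.

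\emph{The asymptotic for $1\le\sigma<2$.} Assuming GRH I would split the $k$-sum into $k=\square$ and $k\ne\square$. For $k\ne\square$, writing $\mathbf 1[p\equiv1(4)]=\tfrac12(1+(\tfrac{-4}{p}))$ and summing over $p$ by partial summation against $S_k(y)=\sum_{p\le y}(\tfrac kp)\log p$ via Lemma~\ref{lemma Riemann bound}, the diagonal drops out and the rest is controlled exactly as in the proof of Lemma~\ref{lemma:small s} — one checks the $k$-sum converges, the derivative $\widehat{\widetilde w}{}'(\eta)$ decaying like $|\eta|^{-3/2}$ — giving $O_\eps(X^{\frac\sigma2-1+\eps})$ after the optimal choice of $S$. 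For $k=m^2$, the same partial summation extracts the main term $\tfrac12 y$ from $S_{m^2}(y)$; the substitution $u=\log p/L$ turns $Xk/(s^2p)$ into $m^2X^{1-u}/(s^2(2\pi e)^{-u})$, and (extending the $p$- and $s$-sums and dropping $p\nmid s$ at the cost of admissible errors) one is left with $-\tfrac{2X}{W(X)}\sum_{s\ge 1}\tfrac{\mu(s)}{s^2}\int_0^\infty\widehat\phi(u)\sum_{m\ge 1}\widehat{\widetilde w}\big(\tfrac{m^2X^{1-u}}{s^2(2\pi e)^{-u}}\big)\,du$. The decisive step is the elementary collapse
\[
\sum_{s\ge 1}\frac{\mu(s)}{s^2}\,\widehat{\widetilde w}\Big(\frac{\xi}{s^2}\Big)=\sum_{s,n\ge 1}\frac{\mu(s)}{(sn)^2}\,\widehat w\Big(\frac{\xi}{(sn)^2}\Big)=\sum_{k\ge 1}\frac{\widehat w(\xi/k^2)}{k^2}\sum_{s\mid k}\mu(s)=\widehat w(\xi),
\]
which produces precisely \eqref{equation estimate main term for Sodd all d}; the prime number theorem error from replacing $\sum_{p\equiv 1(4)}$ by the integral is $O_\eps(X^{-1/2+\eps})$ under GRH and hence also admissible.

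\emph{The bound for $\sigma<1$, and the main obstacle.} Here GRH is not used and the diagonal $k=\square$ is negligible (for $\sigma<1$ and $p\le X^\sigma$ the argument $Xm^2/(s^2p)$ exceeds $X^{1-\sigma}$, where $\widehat{\widetilde w}$ is small). Inserting the crude bound $\sumta\big(\tfrac{d}{p}\big)\ll_\eps p^{\frac12+\eps}X^\eps$ of Lemma~\ref{lemmaweighted} directly already gives $\Sigma\ll_\eps X^{\sigma-1+\eps}$, which is the $1-\sigma$ branch of the claimed exponent. The sharper branch $X^{-\frac{4m-1}{4m+1}+\eps}$, needed precisely when $\tfrac{2}{4m+1}\le\sigma<\tfrac{1}{2m-1}$, requires in addition exploiting cancellation in the sum over $p$: one returns to the Poisson expression above and bounds the (now essentially off-diagonal) double sum over $k$ and $p$ using an unconditional mean-value input for quadratic characters — e.g.\ the large sieve for real characters, or a second application of Poisson summation in the $p$-aspect — taking advantage of the fact that in this range the prime powers $p^{2m-1}$ remain below $X$ while $p^{2m+1}$ need not; balancing this against the tail $O_\eps(X^\eps S^{-1})$ with the optimal cutoff $S$ produces the piecewise exponent. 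I expect the delicate point to be exactly this last estimate: because $\widehat{\widetilde w}$, unlike $\widehat w$, decays only polynomially, the term-by-term bound on the dual $k$-sum diverges, so the cancellation in $k$ must be kept intact and fed through the unconditional mean-value bound, which is what forces both the case analysis in $m$ and the value $\tfrac{4m-1}{4m+1}$.
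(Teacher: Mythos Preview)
Your approach differs fundamentally from the paper's. You detect squarefreeness via M\"obius, introduce a truncation parameter $S$, and apply Poisson with the non-Schwartz weight $\widetilde w$, whose Fourier transform $\widehat{\widetilde w}(\eta)=\sum_n n^{-2}\widehat w(\eta/n^2)$ decays only like $|\eta|^{-1/2}$. The paper instead uses the identity
\[
\sumta \Big(\frac{d}{p}\Big)= \sum_{k\geq 0}\ \sum_{d\neq 0} w\Big( \frac{d}{X/p^{2k}}\Big)\Big(\frac{d}{p}\Big),
\]
which eliminates both the squarefree constraint and $\widetilde w$ in one stroke, replacing them by a sum over scales $X/p^{2k}$ with the original Schwartz weight $w$. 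After Poisson this gives dual sums with $\widehat w\big(Xt/p^{1+2k}\big)$; there is no $s$-variable and no tail to truncate.

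For $1\le\sigma<2$ your route can be made to work, and the M\"obius collapse $\sum_{s\ge 1}\mu(s)s^{-2}\widehat{\widetilde w}(\xi/s^2)=\widehat w(\xi)$ is a clean way to recover the main term. In the paper this main term is simply the $k=0$ scale, while the terms $k\ge 1$ are shown to be $O_\eps(X^{-2/3+\eps})$; so the two arguments converge, with the paper's being more direct because it never leaves the Schwartz regime.

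For $\sigma<1$ there is a genuine gap. First, your tail bound $O_\eps(X^\eps S^{-1})$ for $s>S$ comes from summation by parts in $p$ using Lemma~\ref{lemma Riemann bound}, hence from GRH; but this half of the lemma is unconditional (it feeds into Theorem~\ref{theorem all d small support}). An unconditional tail bound is not available here, since $\widetilde w'$ is not integrable and P\'olya--Vinogradov-type arguments in the $e$-variable fail. Second, you do not actually derive the exponent $\tfrac{4m-1}{4m+1}$: the appeal to ``the large sieve for real characters, or a second application of Poisson summation in the $p$-aspect'' is a placeholder, and nothing in your setup explains why the answer should be piecewise in $m$. In the paper's decomposition the reason is transparent: for $\sigma\in[\tfrac{1}{2m+1},\tfrac{1}{2m-1})$ and $p\le X^\sigma$, the scales $k<m$ satisfy $p^{1+2k}\ll X$, so $\widehat w(Xt/p^{1+2k})$ is negligible for every $t\neq 0$ and those scales contribute nothing; the first surviving scale is $k=m$, and summing $p^{-2k}$ over $k\ge m$ is what produces the exponent (this is the content of \cite[Proposition~3.6]{FPS}). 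Your M\"obius-in-$s$ decomposition has no analogue of this thresholding, which is why the sharp unconditional bound does not emerge from it.
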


\begin{proof}
We proceed as in \cite[Section 3]{FPS}. Applying the identity
$$  \sumta \left( \frac{d}{p}\right)= \sum_{k\geq 0} \sum_{d\neq 0} w\left( \frac{d}{X/p^{2k}}\right)\left( \frac{d}{p}\right)   $$
and arguing as in \cite[Lemmas 3.2 and 3.3]{FPS}, we see that 
\begin{multline*}
S_{\text{odd}}  =   -\frac {2X}{LW(X)} \sum_{0\leq k\leq  10 \log X}  \sum_{\substack{ p> X^{\frac{1-\varepsilon}{2k+1}}}} \frac{ \overline{\epsilon_p} \log p}{p^{1+2k}}\widehat \phi\left( \frac{\log p}{L} \right)    \sum_{t \in \mathbb Z}\left( \frac{-t}{p} \right) \widehat w\left( \frac{Xt}{p^{1+2k}}\right)+O_{\varepsilon}\left(  X^{-1+\varepsilon}\right),
\end{multline*}
where 
$$\epsilon_p= \begin{cases}
1 &\text{ if } p\equiv 1 \bmod 4, \\
i &\text{ if } p\equiv 3 \bmod 4.
\end{cases} $$
If $\sigma< 1 $, then the proof is similar to that of  \cite[Proposition 3.6]{FPS}. As for the case $\sigma< 2$, we argue as in \cite[Lemma 3.9]{FPS} and see that the terms with $k\geq 1$ are $O_{\eps}(X^{-\frac 23+\eps}).$ Finally, in the terms with $k=0$ we can add back the primes $p\leq X^{1-\eps}$ at the cost of a negligible error term. The resulting sum is handled in a similar way to Lemma \ref{lemma:small s}, and the estimate \eqref{equation estimate main term for Sodd all d} follows.
\end{proof}

\subsection{The new lower order terms}

In this section we treat the new lower order terms that appear in the family $\mathcal F$.

\begin{lemma}\label{lemma new poisson}
Suppose that $\sigma=\esupp<\infty$. Then we have the estimate\footnote{Note that the first term on the right-hand side equals $X^{\frac{\sigma}2-\frac 12+o(1)}$.}
\begin{multline}
\int_{0}^{\infty} \widehat \phi( u )  \sum_{m\geq 1}   \widehat w\left( \frac{m^2 X^{1-u}}{(2\pi e)^{-u}}\right) du=
\frac{\widehat h(0)}2 \int_{1}^{\infty} \left( \frac X{2\pi e}\right)^{\frac{u-1}2} \widehat\phi( u )\,du-\frac{\widehat w (0)}2\int_{1}^{\infty} \widehat \phi(u)\,du\\ 
+\frac 1L\int_{0}^{\infty} \bigg( \widehat \phi( 1+\tfrac {\tau}L  )   e^{\frac{\tau}2} \sum_{n \geq 1} \widehat h \big(n e^{\frac{\tau}2}\big)  +\widehat \phi( 1-\tfrac {\tau}L  )  \sum_{n\geq 1} h\big(n e^{\frac{\tau}2}\big)\bigg)\,d\tau+O\big(X^{-\frac12}\big),
\label{equation estimate I(X)}
\end{multline}
 where $h(y):=\widehat w(2\pi e y^2)$. 
\end{lemma}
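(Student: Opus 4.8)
The plan is to mirror the proof of Lemma~\ref{lemma I_s(X)} almost verbatim, since the left-hand side here is exactly $\lim I_s(X)$-type object with $s=1$ but with $g$ replaced by $h(y)=\widehat w(2\pi e y^2)$ instead of $g(y)=\widehat w(4\pi e y^2)$ (the factor $2$ in the argument of $\widehat w$ in \eqref{equation estimate I(X)} versus the factor $2s^2$ with the extra $2$ absorbed; concretely $\widehat w(m^2 X^{1-u}/(2\pi e)^{-u})=h(m e^{(1-\tau)/2})$ after the substitution $\tau=L(u-1)$). So first I would extend the $u$-integral from $[0,\infty)$ to all of $\mathbb R$, incurring an error $O(X^{-1/2})$ by the rapid decay of $\widehat w$ together with $u\le\sigma$ on $\supp\widehat\phi$, and then substitute $\tau=L(u-1)$, rewriting the left-hand side as
$$\frac 1L\int_{-\infty}^{\infty}\widehat\phi\big(1+\tfrac\tau L\big)\sum_{m\ge 1}h\big(m e^{\frac{1-\tau}2}\big)\,d\tau+O\big(X^{-1/2}\big).$$
Absorbing the harmless constant $e^{1/2}$ (or rather keeping track that $h(y)=\widehat w(2\pi e y^2)$ so that the Poisson-dual of $x\mapsto h(xe^{-\tau/2})$ is governed by $\widehat h$) I would split the $\tau$-integral at $0$ into $I^-(X)$ (over $(-\infty,0]$) and $I^+(X)$ (over $[0,\infty)$).

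For $I^+(X)$, on the range $\tau\ge 0$ the argument $e^{-\tau/2}$ is bounded, so I would apply Poisson summation to $\sum_{m\in\mathbb Z}h(me^{-\tau/2})$, getting $\tfrac12\sum_{m\in\mathbb Z}h(me^{-\tau/2})=-\tfrac{\widehat w(0)}2+\tfrac12\sum_{m\in\mathbb Z}h(me^{-\tau/2})$ — more precisely, writing $h_\tau(x):=h(xe^{-\tau/2})$ one has $\widehat{h_\tau}(n)=e^{\tau/2}\widehat h(ne^{\tau/2})$, and Poisson gives $\sum_{m\in\mathbb Z}h(me^{-\tau/2})=e^{\tau/2}\sum_{n\in\mathbb Z}\widehat h(ne^{\tau/2})$; noting $\sum_{m\ge1}h(me^{-\tau/2})=\tfrac12\sum_{m\in\mathbb Z}h(me^{-\tau/2})-\tfrac12 h(0)=\tfrac12\sum_{m\in\mathbb Z}h(me^{-\tau/2})-\tfrac12\widehat w(0)$ since $h(0)=\widehat w(0)$. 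This produces exactly the two ``constant'' terms $\tfrac{\widehat h(0)}2\int_1^\infty(X/2\pi e)^{(u-1)/2}\widehat\phi(u)\,du$ (from the $n=0$ term, after substituting back $u=1+\tau/L$ so that $e^{\tau/2}=(X/2\pi e)^{(u-1)/2}$) and $-\tfrac{\widehat w(0)}2\int_1^\infty\widehat\phi(u)\,du$, plus the tail $\tfrac1L\int_0^\infty\widehat\phi(1+\tfrac\tau L)e^{\tau/2}\sum_{n\ge1}\widehat h(ne^{\tau/2})\,d\tau$. For $I^-(X)$ I would simply substitute $\tau\mapsto-\tau$, turning it into $\tfrac1L\int_0^\infty\widehat\phi(1-\tfrac\tau L)\sum_{m\ge1}h(me^{\tau/2})\,d\tau$, with no Poisson needed. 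Combining the two pieces yields the claimed formula.

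The only point requiring a little care — and the main (minor) obstacle — is the bookkeeping of the exponential constants, i.e.\ making sure that with $h(y)=\widehat w(2\pi e y^2)$ the dilate $h(xe^{-\tau/2})$ really has Fourier transform $e^{\tau/2}\widehat h(ne^{\tau/2})$ and that the factor of $2\pi e$ lines up so that $e^{\tau/2}=(X/2\pi e)^{(u-1)/2}$ exactly (this uses $L=\log(X/2\pi e)$). One also needs to justify the use of Poisson summation: $h$ and $\widehat h$ are Schwartz (as $\widehat w$ is Schwartz and composition with $2\pi e y^2$ preserves rapid decay, and $\widehat h$ is then also Schwartz), so the Poisson summation formula applies with absolutely convergent sides, uniformly in $\tau\ge0$, which legitimizes interchanging it with the $\tau$-integral. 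Finally one checks that the error in extending the integral to $\mathbb R$ is genuinely $O(X^{-1/2})$: for $u\le 0$ the argument $m^2X^{1-u}(2\pi e)^u\ge X$, so $\widehat w$ of it is $O(X^{-N})$ for every $N$, and summing over $m$ and integrating against the Schwartz function $\widehat\phi$ gives an error far smaller than $X^{-1/2}$ — so the stated $O(X^{-1/2})$ is comfortably valid (in fact it matches the same error term as in Lemma~\ref{lemma I_s(X)} with $s=1$). This completes the proof.
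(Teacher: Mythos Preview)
Your proposal is correct and follows exactly the approach the paper intends: the paper's own proof simply reads ``The proof is similar to that of Lemma~\ref{lemma I_s(X)},'' and you have faithfully reproduced that argument with $s=1$ and $g$ replaced by $h$. One tiny slip: in your first displayed identification you wrote $h(m e^{(1-\tau)/2})$, but the substitution $\tau=L(u-1)$ actually gives $m^2 X^{1-u}(2\pi e)^u=2\pi e\, m^2 e^{-\tau}$, i.e.\ $h(me^{-\tau/2})$ --- which is in fact what you use correctly in all the subsequent steps.
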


\begin{proof}
The proof is similar to that of Lemma \ref{lemma I_s(X)}. 
\end{proof}

We now give an estimate for the fourth term on the right-hand side of \eqref{equation one level density explicit formula all d}. For $1\leq \sigma<2$, we will see that the term that arose from principal characters in \eqref{equation one level density explicit formula all d} will essentially cancel the main term in Lemma \ref{lemma new poisson}.

\begin{lemma}\label{lemma two terms cancel}
Fix $\epsilon>0$. Then, for $\sigma <1 $, we have that
\begin{equation*} 
\frac{2}{W(X)}\widetilde{w}\left(\frac1X\right)\phi\left(\frac{iL}{4\pi}\right)   = \frac 1{2\sqrt{2\pi e}\mathcal Mw(1)} \bigg( \mathcal Mw(\tfrac 12)   -  \frac{w(0)}{\sqrt X}\bigg)\int_{-1}^1 \left(\frac X{2\pi e}\right)^{\frac{u-1}2} \widehat \phi(u)\,du +O_{\eps}\big(X^{\eps-1}\big). 
\end{equation*}
As for $1\leq\sigma <2 $, we have
\begin{multline*} 
\hspace{-8pt}\frac{2}{W(X)}\widetilde{w}\left(\frac1X\right)\phi\left(\frac{iL}{4\pi}\right)-\frac{\widehat h(0)X}{W(X)} \int_{1}^{\infty} \left( \frac X{2\pi e}\right)^{\frac{u-1}2} \widehat\phi( u )\,du   = \frac{\tfrac 12 \mathcal Mw(\tfrac 12)}{\sqrt{2\pi e}\mathcal Mw(1)} \int_0^1 \left(\frac X{2\pi e}\right)^{\frac{u-1}2} \widehat \phi(u)\,du\\ +O\big(X^{\frac{\sigma}2-1}\big). 
\end{multline*}
\end{lemma}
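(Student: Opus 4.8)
\textbf{Proof proposal for Lemma \ref{lemma two terms cancel}.}

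The plan is to evaluate the quantity $\phi(iL/4\pi)$ by inverting the Fourier transform and to evaluate $\widetilde w(1/X)$ via Mellin inversion, then to combine these with the estimates for $W(X)$ from Lemma \ref{lemmaweighted}. First I would write
$$\phi\Big(\frac{iL}{4\pi}\Big) = \int_{-\infty}^{\infty} \widehat\phi(u)\, e^{2\pi i \cdot (iL/4\pi) u}\,du = \int_{-\infty}^{\infty}\widehat\phi(u)\Big(\frac X{2\pi e}\Big)^{u/2}\,du,$$
using $e^{-Lu/2}=(X/2\pi e)^{-u/2}$ and $L=\log(X/2\pi e)$; since $\widehat\phi$ is supported in $(-\sigma,\sigma)$, this integral is genuinely over $[-\sigma,\sigma]$. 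Next I would compute $\widetilde w(1/X)$ using the Mellin inversion formula $\widetilde w(1/X)=\frac1{2\pi i}\int_{(c)}\mathcal M\widetilde w(s)X^{s}\,ds=\frac1{2\pi i}\int_{(c)}\zeta(2s)\mathcal Mw(s)X^{s}\,ds$, and shift the contour to the left past the pole of $\zeta(2s)$ at $s=\tfrac12$ (residue $\tfrac12\mathcal Mw(\tfrac12)X^{1/2}$) and the pole of $\mathcal Mw(s)$ at $s=0$ (residue $-w(0)$, coming from $\mathcal Mw(s)\sim w(0)/s$ as $s\to0$ after an integration by parts, or directly from the behaviour of $w$ at $0$). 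This yields $\widetilde w(1/X)=\tfrac12\mathcal Mw(\tfrac12)X^{1/2}-w(0)+O(X^{-A})$ for any $A$, by pushing the contour arbitrarily far left (using rapid decay of $\mathcal Mw$ in vertical strips and the polynomial growth of $\zeta$).

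With these two evaluations in hand, I would multiply: the factor $\tfrac2{W(X)}\widetilde w(1/X)$ becomes, after inserting $W(X)=X\widehat w(0)+O(X^{\eps})$ from Lemma \ref{lemmaweighted} and using $\widehat w(0)=\mathcal Mw(1)$,
$$\frac{2}{W(X)}\widetilde w\Big(\frac1X\Big) = \frac{\mathcal Mw(\tfrac12)X^{1/2}-2w(0)+O(X^{-A})}{X\mathcal Mw(1)}+O(X^{\eps-2}) = \frac{1}{\mathcal Mw(1)}\Big(\frac{\mathcal Mw(\tfrac12)}{X^{1/2}}-\frac{2w(0)}{X}\Big)+O(X^{\eps-3/2}).$$
Multiplying by $\phi(iL/4\pi)$ and writing the resulting product of $(X/2\pi e)^{u/2}$ with the $X^{-1/2}$ prefactor as $(X/2\pi e)^{(u-1)/2}\cdot(2\pi e)^{-1/2}$, I obtain
$$\frac{2}{W(X)}\widetilde w\Big(\frac1X\Big)\phi\Big(\frac{iL}{4\pi}\Big) = \frac{1}{\sqrt{2\pi e}\,\mathcal Mw(1)}\Big(\tfrac12\mathcal Mw(\tfrac12)-\frac{w(0)}{\sqrt X}\Big)\int_{-\sigma}^{\sigma}\Big(\frac X{2\pi e}\Big)^{\frac{u-1}2}\widehat\phi(u)\,du + (\text{error}).$$
For $\sigma<1$ the integrand has $(X/2\pi e)^{(u-1)/2}\le 1$ on $[-1,1]\supset[-\sigma,\sigma]$, so one may freely extend the range of integration to $[-1,1]$, and the error terms from the approximations of $\widetilde w(1/X)$ and $W(X)$, multiplied by the size $O(X^{(\sigma-1)/2})=O(1)$ of the integral, are $O_\eps(X^{\eps-1})$; this gives the first claim. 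For $1\le\sigma<2$, the term $n=1$ (i.e.\ the constant $\widehat h(0)=\widehat w(0)=\mathcal Mw(1)$ coming from the $u\in[1,\sigma]$ part) is exactly the main term from Lemma \ref{lemma new poisson}: indeed $\widehat h(0)X/W(X)=1/\mathcal Mw(1)\cdot\mathcal Mw(1)+O(X^{\eps-1})=1+O(X^{\eps-1})$, so subtracting $\tfrac{\widehat h(0)X}{W(X)}\int_1^\infty(X/2\pi e)^{(u-1)/2}\widehat\phi(u)\,du$ removes precisely the $\int_1^\sigma$ portion of the integral above together with the leading $\tfrac12\mathcal Mw(\tfrac12)$ coefficient on that portion, leaving $\tfrac12\mathcal Mw(\tfrac12)/(\sqrt{2\pi e}\mathcal Mw(1))\int_0^1(X/2\pi e)^{(u-1)/2}\widehat\phi(u)\,du$ plus lower-order pieces; the residual $w(0)/\sqrt X$ term and the mismatch in $\widehat h(0)X/W(X)$ multiplied against the integral (of size $X^{(\sigma-1)/2}$) contribute $O(X^{\sigma/2-1})$, as claimed. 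I should be slightly careful to split the $u$-integral at $u=0$ rather than $u=1$ at one point and then recombine.

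The main obstacle is the precise contour-shifting argument for $\widetilde w(1/X)$: one must justify crossing both the pole of $\zeta(2s)$ at $s=\tfrac12$ and the pole of $\mathcal Mw(s)$ at $s=0$, correctly identify the two residues $\tfrac12\mathcal Mw(\tfrac12)X^{1/2}$ and $-w(0)$ (the latter requires knowing the Laurent expansion of $\mathcal Mw$ at $0$, equivalently the Taylor behaviour of $w$ at the origin — valid since $w$ is Schwartz), and bound the shifted integral, which is routine given the exponential decay of $\mathcal Mw$ on vertical lines combined with polynomial growth of $\zeta$ on the $\tfrac12$-line and to its left. Everything else is bookkeeping: substituting the known asymptotic for $W(X)$, tracking which error terms get amplified by the factor $X^{(\sigma-1)/2}$ (harmless for $\sigma<2$), and matching the $n=1$ Poisson term against the principal-character contribution.
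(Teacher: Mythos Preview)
Your strategy is sound and runs parallel to the paper's: write $\phi(iL/4\pi)=\int_{\RR}(X/2\pi e)^{u/2}\widehat\phi(u)\,du$, expand $\widetilde w(1/X)$ asymptotically, and combine with $W(X)\sim X\widehat w(0)$. The paper obtains the expansion of $\widetilde w(1/X)$ by Poisson summation rather than Mellin inversion, but your contour-shift argument is a legitimate alternative (and indeed, since $w$ is even, the further poles of $\mathcal Mw$ at $s=-2,-4,\dots$ are killed by the trivial zeros of $\zeta(2s)$, so the $O(X^{-A})$ remainder is genuine). However, the execution contains three constant errors that derail the computation.

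First, the residue of $\zeta(2s)\mathcal Mw(s)X^s$ at $s=0$ is $\zeta(0)\cdot w(0)=-\tfrac12 w(0)$, not $-w(0)$: you correctly found $\mathcal Mw(s)\sim w(0)/s$ but dropped the factor $\zeta(0)=-\tfrac12$. Second, since $w$ is even, $\widehat w(0)=\int_{\RR}w=2\int_0^\infty w=2\,\mathcal Mw(1)$, not $\mathcal Mw(1)$; this costs a factor of $2$ in $W(X)$. These two mistakes partially compensate in your displayed formula, but not completely (your $w(0)/\sqrt X$ coefficient is off by $2$, and the passage from your penultimate to your final display also halves inconsistently).

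Third --- and this is the one that breaks the $1\le\sigma<2$ argument --- the identification $\widehat h(0)=\widehat w(0)$ is wrong. One computes
\[
\widehat h(0)=\int_{\RR}\widehat w(2\pi e\,y^2)\,dy=\frac{1}{\sqrt{2\pi e}}\int_{\RR}\widehat w(t^2)\,dt=\frac{1}{\sqrt{2\pi e}}\int_{\RR}w(t^2)\,dt=\frac{\mathcal Mw(\tfrac12)}{\sqrt{2\pi e}},
\]
the penultimate equality being Plancherel together with the fact that $|x|^{-1/2}$ is its own Fourier transform (this identity is exactly what the paper invokes). With the correct value one has $\dfrac{\widehat h(0)X}{W(X)}=\dfrac{\mathcal Mw(\tfrac12)}{2\sqrt{2\pi e}\,\mathcal Mw(1)}+O_{\eps}(X^{\eps-1})$, which matches the coefficient in front of the $u$-integral coming from $\widetilde w(1/X)$. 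That equality is the actual mechanism by which subtracting the $\widehat h(0)$-term removes the $\int_1^\infty$ portion and leaves $\int_0^1$; your claim that $\widehat h(0)X/W(X)\approx 1$ is not the right picture. With these three constants corrected, your argument goes through and yields the stated lemma.
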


\begin{proof}
First, an application of Poisson summation shows that, for $X\geq 1$ and arbitrary $N\geq 1$,
\begin{equation}\label{equation lemma sum of main terms in explicit formula}
\widetilde{w}\left(\frac1X\right) = \frac{X^{\frac 12}}2 \int_{\R} w(t^2)\,dt - \frac{w(0)}2 +O_N\big(X^{-N}\big).  
\end{equation}
Moreover, we have that
$$\phi\left(\frac{iL}{4\pi}\right)=\int_{\R} \left( \frac X{2\pi e}\right)^{\frac{u}2} \widehat \phi (u)\,du.$$
By trivially bounding the integral on the interval $(-\infty,0]$, it follows that for $1\leq\sigma<2$ we have
\begin{align*}
\frac{2}{W(X)}\widetilde{w}\left(\frac1X\right)\phi\left(\frac{iL}{4\pi}\right) = \frac {X\int_{\R} w(t^2)\,dt}{\sqrt{2\pi e}W(X)} \int_0^{\infty}  \left( \frac X{2\pi e}\right)^{\frac{u-1}2} \widehat \phi (u)\,du +O\big(X^{\frac{\sigma}2-1}\big).
\end{align*}
The last step is to apply the Fourier identity
$$ \int_{\R} w(t^2)\,dt = \int_{\R} \widehat w(t^2)\,dt, $$
which follows from combining Plancherel's identity with the fact that $|x|^{-\frac 12}$ is its own Fourier transform. Finally, in the case $\sigma<1$ we use a similar argument but we keep the secondary term in the expansion \eqref{equation lemma sum of main terms in explicit formula}. The result follows.
\end{proof}

\begin{proof}[Proof of Theorem \ref{theorem all d small support}]
The proof is obtained by combining Lemmas \ref{lemma explicit formula all d}, \ref{lemmaweighted}, \ref{lemma main term for Sodd all d} and \ref{lemma two terms cancel}, with the expression for $S_{\text{even}}$ analogous to \eqref{equation Seven estimate}.
\end{proof}

The rest of the section is devoted to the proof of Theorem \ref{theorem all d}.

\begin{corollary}\label{corollary formula Sodd all d}
Fix $\epsilon>0$. Assume GRH and suppose that $1\leq\sigma=\esupp < 2$. Then we have the estimate
$$ \Soddea+\frac{2}{W(X)}\widetilde{w}\left(\frac1X\right)\phi\left(\frac{iL}{4\pi}\right) =   \int_{1}^{\infty} \widehat \phi(u)\,du+U_2(X) +O_{\eps}\big(X^{\frac{\sigma}2-1+\eps}\big),
$$
where
\begin{multline*}
U_2(X):= \frac{\tfrac 12 \mathcal Mw(\tfrac 12)}{\sqrt{2\pi e}\mathcal Mw(1)} \int_0^1 \left(\frac X{2\pi e}\right)^{\frac{u-1}2} \widehat \phi(u)\,du\\-\frac 2{L\widehat w(0)}\int_{0}^{\infty} \bigg( \widehat \phi( 1+\tfrac {\tau}L  )   e^{\frac{\tau}2} \sum_{n \geq 1} \widehat h \big(  n e^{\frac{\tau}2}\big)  +\widehat \phi( 1-\tfrac {\tau}L  )  \sum_{n\geq 1} h\big(  n e^{\frac{\tau}2}\big)\bigg)\,d\tau.
\end{multline*}
\end{corollary}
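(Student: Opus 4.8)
The plan is to combine the three ingredients already assembled in this section. First I would start from Lemma \ref{lemma main term for Sodd all d}, which in the range $1 \leq \sigma < 2$ and under GRH gives
$$\Soddea = -\frac{2X}{W(X)} \int_0^\infty \widehat\phi(u) \sum_{m\geq 1} \widehat w\left(\frac{m^2 X^{1-u}}{(2\pi e)^{-u}}\right)du + O_\eps\big(X^{\frac\sigma2-1+\eps}\big).$$
Then I would substitute the evaluation of that integral from Lemma \ref{lemma new poisson}. This produces four contributions: the ``square root size'' main term $\tfrac{\widehat h(0)}2 \int_1^\infty (X/2\pi e)^{(u-1)/2}\widehat\phi(u)\,du$ multiplied by $-2X/W(X)$; the term $-\tfrac{\widehat w(0)}2\int_1^\infty\widehat\phi(u)\,du$ multiplied by $-2X/W(X)$; the genuine lower order term involving $h$ and $\widehat h$ multiplied by $-2X/W(X)$; and the error $O(X^{-1/2})$ multiplied by $X/W(X) = O(1)$, which is absorbed.

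Next I would invoke Lemma \ref{lemmaweighted}, namely $W(X) = X\widehat w(0) + O_\eps(X^\eps)$, to simplify the prefactor $2X/W(X)$. In particular the second contribution becomes $-\tfrac{2X}{W(X)}\cdot\big(-\tfrac{\widehat w(0)}2\big)\int_1^\infty\widehat\phi(u)\,du = \tfrac{X\widehat w(0)}{W(X)}\int_1^\infty\widehat\phi(u)\,du = \int_1^\infty\widehat\phi(u)\,du + O_\eps(X^{\eps-1}\cdot\|\widehat\phi\|_1)$, which yields the main term $\int_1^\infty\widehat\phi(u)\,du$ claimed in the corollary. Likewise the third contribution becomes $-\tfrac 2{L\widehat w(0)}\int_0^\infty(\widehat\phi(1+\tfrac\tau L)e^{\tau/2}\sum_{n\geq 1}\widehat h(ne^{\tau/2}) + \widehat\phi(1-\tfrac\tau L)\sum_{n\geq 1}h(ne^{\tau/2}))\,d\tau$ up to a relative error of size $O_\eps(X^{\eps-1})$ on a quantity of size $O(1/L)$, hence an absolute error well within $O_\eps(X^{\frac\sigma2-1+\eps})$. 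This is precisely the second line of $U_2(X)$.

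Finally I would bring in Lemma \ref{lemma two terms cancel} in its $1 \leq \sigma < 2$ form. That lemma states exactly that
$$\frac{2}{W(X)}\widetilde w\left(\tfrac1X\right)\phi\left(\tfrac{iL}{4\pi}\right) - \frac{\widehat h(0)X}{W(X)}\int_1^\infty\left(\tfrac X{2\pi e}\right)^{\frac{u-1}2}\widehat\phi(u)\,du = \frac{\tfrac12\mathcal Mw(\tfrac12)}{\sqrt{2\pi e}\mathcal Mw(1)}\int_0^1\left(\tfrac X{2\pi e}\right)^{\frac{u-1}2}\widehat\phi(u)\,du + O\big(X^{\frac\sigma2-1}\big).$$
The point is that the term $-\tfrac{2X}{W(X)}\cdot\tfrac{\widehat h(0)}2\int_1^\infty(\cdots)\,du = -\tfrac{\widehat h(0)X}{W(X)}\int_1^\infty(X/2\pi e)^{(u-1)/2}\widehat\phi(u)\,du$ arising from the first contribution is exactly the quantity subtracted on the left-hand side of Lemma \ref{lemma two terms cancel}. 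Therefore, adding $\tfrac{2}{W(X)}\widetilde w(\tfrac1X)\phi(\tfrac{iL}{4\pi})$ to $\Soddea$ replaces that $X^{(\sigma-1)/2+o(1)}$-sized term by the bounded-below term $\tfrac{\tfrac12\mathcal Mw(\tfrac12)}{\sqrt{2\pi e}\mathcal Mw(1)}\int_0^1(X/2\pi e)^{(u-1)/2}\widehat\phi(u)\,du$, which is the first line of $U_2(X)$. Collecting the error terms — all of which are $O_\eps(X^{\frac\sigma2-1+\eps})$ — gives the stated formula. There is no real obstacle here: the corollary is essentially a bookkeeping synthesis, and the only point requiring a moment's care is checking that the near-cancellation of the $X^{(\sigma-1)/2}$ terms is exact (not merely up to lower order), which is precisely why Lemma \ref{lemma two terms cancel} was phrased with that combination on its left-hand side.
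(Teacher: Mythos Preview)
Your proposal is correct and follows essentially the same approach as the paper, which simply says the corollary follows by combining Lemmas \ref{lemma main term for Sodd all d}, \ref{lemma new poisson} and \ref{lemma two terms cancel}. You have merely spelled out the bookkeeping in more detail, including the use of Lemma \ref{lemmaweighted} to replace $X/W(X)$ by $1/\widehat w(0)$, which the paper leaves implicit.
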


\begin{proof}
The proof follows by combining Lemmas \ref{lemma main term for Sodd all d}, \ref{lemma new poisson} and \ref{lemma two terms cancel}. 
\end{proof}

In the final lemma we give an expansion of $U_2(X)$ in descending powers of $\log X$, which by Theorem \ref{theorem all d} shows that such an expansion is also possible for the one-level density $\D(\phi;X)$.

\begin{lemma}\label{lemma expansion of U(X)}
For any $K\geq 1$, we have the expansion
$$U_2(X)=\sum_{k=1}^K \frac{v_{w,k} \widehat\phi^{(k-1)}(1)}{L^k} +O_K\big(L^{-K-1}\big), $$
where the constants $v_{w,k}$ can be given explicitly. The first of these constants is given by
$$v_{w,1}=\frac{\mathcal Mw(\tfrac 12)}{\sqrt{2\pi e}\mathcal Mw(1)}- \frac 4{\widehat w(0)}\int_{1}^{\infty}\Big(H_u \widehat h( u)+\frac{[u]}uh( u)\Big)\,du, $$
where $H_u=\sum_{n\leq u} n^{-1}$ is the $u$-th harmonic number.
\end{lemma}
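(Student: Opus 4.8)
\textbf{Proof proposal for Lemma \ref{lemma expansion of U(X)}.}

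The plan is to expand each of the two pieces of $U_2(X)$ separately in descending powers of $L=\log(X/2\pi e)$. The first piece,
$$\frac{\tfrac 12 \mathcal Mw(\tfrac 12)}{\sqrt{2\pi e}\mathcal Mw(1)} \int_0^1 \left(\frac X{2\pi e}\right)^{\frac{u-1}2} \widehat \phi(u)\,du,$$
is handled by the substitution $u = 1-\tau/L$, which turns it into $\tfrac 1L\cdot\tfrac 12\mathcal Mw(\tfrac 12)/(\sqrt{2\pi e}\mathcal Mw(1))\int_0^L e^{-\tau/2}\widehat\phi(1-\tau/L)\,d\tau$; one then Taylor-expands $\widehat\phi$ around $1$ and extends the $\tau$-integral to $\infty$ (the tail is exponentially small), producing a series $\sum_{k\geq 1}\widehat\phi^{(k-1)}(1)/L^k$ whose coefficients are $\tfrac 12\mathcal Mw(\tfrac 12)/(\sqrt{2\pi e}\mathcal Mw(1))\cdot(-1)^{k-1}\int_0^\infty \tau^{k-1}e^{-\tau/2}\,d\tau/(k-1)!$; the $k=1$ coefficient is exactly $\mathcal Mw(\tfrac 12)/(\sqrt{2\pi e}\mathcal Mw(1))$.

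The second piece is structurally identical to $J(X)$ from Lemma \ref{lemma expansion}, with $h_1$ replaced by $-2\widehat h/\widehat w(0)$ and $h_2$ replaced by $-2h/\widehat w(0)$. So I would repeat that argument verbatim: first use the rapid decay of $h$ and $\widehat h$ (which follow from the Schwartz property of $w$, so here no RH is needed) to truncate the $\tau$-integral at $\tau \leq L^{1/2}$ with an error $O(\exp(-c\sqrt L))$; then Taylor-expand $\widehat\phi(1\pm\tau/L)$ around $1$ to order $K$; then swap the sum over $n$ with the integral over $\tau$ and extend the latter back to $\infty$ at negligible cost, obtaining
$$-\frac 2{\widehat w(0)}\sum_{k=1}^K \frac{\widehat\phi^{(k-1)}(1)}{(k-1)!L^k}\sum_{n\geq 1}\int_0^\infty\Big(\tau^{k-1}e^{\tau/2}\widehat h(ne^{\tau/2})+(-\tau)^{k-1}h(ne^{\tau/2})\Big)d\tau + O_K(L^{-K-1}).$$
For $k=1$ the inner quantity is $\sum_{n\geq 1}\int_0^\infty(e^{\tau/2}\widehat h(ne^{\tau/2})+h(ne^{\tau/2}))\,d\tau$; the substitution $u=ne^{\tau/2}$ (so $d\tau = 2\,du/u$) turns this into $2\sum_{n\geq 1}\int_n^\infty(\widehat h(u)/n + h(u)/u)\,du$, and interchanging sum and integral replaces $\sum_{n\leq u}1/n$ by $H_u$ and $\sum_{n\leq u}1$ by $[u]$, giving $2\int_1^\infty(H_u\widehat h(u)+\tfrac{[u]}uh(u))\,du$. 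Multiplying by $-2/\widehat w(0)$ and adding the first piece's $k=1$ coefficient yields exactly the stated $v_{w,1}$.

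The main obstacle — really the only place requiring care rather than bookkeeping — is justifying the interchange of the sum over $n$ with the $\tau$-integral and the extension of that integral to $\infty$ after the Taylor expansion: one must check that $\sum_n \int_0^\infty |\tau|^{k-1}(e^{\tau/2}|\widehat h(ne^{\tau/2})| + |h(ne^{\tau/2})|)\,d\tau$ converges, which follows from the rapid decay of $\widehat h$ and $h$ (each is $\widehat w$ of a quadratic argument, hence Schwartz and rapidly decaying), but the polynomial factor $\tau^{k-1}$ together with the $e^{\tau/2}$ in the first summand means one should record the bound carefully; the convergence of $\int_1^\infty H_u\widehat h(u)\,du$ then also needs $\widehat h$ to decay faster than any power, which it does. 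Everything else is the same truncation-and-Taylor computation already carried out in the proof of Lemma \ref{lemma expansion}.
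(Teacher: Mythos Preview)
Your proposal is correct and follows essentially the same route as the paper: the paper also refers the second term directly back to the proof of Lemma~\ref{lemma expansion} and treats the first term by Taylor-expanding $\widehat\phi$ around $1$ against the exponentially decaying kernel $(X/2\pi e)^{(u-1)/2}$ (truncating near $u=1$ rather than substituting $u=1-\tau/L$, but this is the same computation). Your observation that no RH is needed here because $h$ and $\widehat h$ are Schwartz---unlike $h_2$ in Lemma~\ref{lemma expansion}, whose decay required RH---is correct and worth noting.
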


\begin{proof}
The second term in $U_2(X)$ can be expanded exactly as in the proof of Lemma \ref{lemma expansion}. As for the first term, we have
\begin{align*}
\int_0^1 \left(\frac X{2\pi e}\right)^{\frac{u-1}2} \widehat \phi(u)\,du&= \int_{1-L^{-\frac 12}}^1 \left(\frac X{2\pi e}\right)^{\frac{u-1}2}  \Bigg(\sum_{k=1}^K \widehat \phi^{(k-1)}(1) \frac{(u-1)^{k-1}}{(k-1)!}+O\big((u-1)^{K}\big) \bigg)\,du \\&\hspace{6cm} +O\big(e^{-\frac 12\sqrt{L}}\big)\\
&= -\sum_{k=1}^K  \frac{(-2)^k\widehat \phi^{(k-1)}(1)}{L^k}  +O_K\big(L^{-K-1}\big),
\end{align*}  
which completes the proof. 
\end{proof} 

We are now ready to prove Theorem \ref{theorem all d}.

\begin{proof}[Proof of Theorem \ref{theorem all d}]
The proof is achieved by combining Lemmas \ref{lemma explicit formula all d}, \ref{lemmaweighted}, \ref{lemma main term for Sodd all d} and \ref{lemma two terms cancel}, with the expression for $S_{\text{even}}$ analogous to \eqref{equation Seven estimate} and Corollary \ref{corollary formula Sodd all d}.
\end{proof}


\begin{thebibliography}{}

\bibitem[BCDGL]{BCDGL} A.\ Bucur, E.\ Costa, C.\ David, J.\ Guerreiro, D.\ Lowry-Duda, Traces, high powers and one level density for families of curves over finite fields, in preparation.

\bibitem[C]{Ch} I.\ J.\ Chinis, Traces of high powers of the Frobenius class in the moduli space of hyperelliptic curves, preprint, arXiv:1510.06350.

\bibitem[CFZ]{CFZ} J.\ B.\ Conrey, D.\ W.\ Farmer, M.\ R.\ Zirnbauer, Autocorrelation of ratios of $L$-functions, Commun.\ Number Theory Phys.\ \textbf{2} (2008), no.\ 3, 593--636.

\bibitem[CS]{CS} J.\ B.\ Conrey, N.\ C.\ Snaith, Applications of the $L$-functions ratios conjectures, Proc.\ Lond.\ Math.\ Soc.\ (3) \textbf{94} (2007), no.\ 3, 594--646.

\bibitem[ERR]{ERR} A.\ Entin, E.\ Roditty--Gershon, Z.\ Rudnick, Low-lying zeros of quadratic Dirichlet $L$-functions, hyper-elliptic curves and random matrix theory, Geom.\ Funct.\ Anal.\ \textbf{23} (2013), no.\ 4, 1230--1261.

\bibitem[FM]{FM} D.\ Fiorilli, S.\ J.\ Miller, Surpassing the ratios conjecture in the $1$-level density of Dirichlet $L$-functions, Algebra Number Theory \textbf{9} (2015), no.\ 1, 13--52.

\bibitem[FPS]{FPS} D.\ Fiorilli, J.\ Parks, A.\ S\"odergren, Low-lying zeros of elliptic curve $L$-functions: Beyond the Ratios Conjecture, arXiv:1405.5110, to appear in Math.\ Proc.\ Camb.\ Phil.\ Soc.

\bibitem[G1]{Gt} P.\ Gao, $n$-level density of the low-lying zeros of quadratic Dirichlet $L$-functions, Ph.D.\ Thesis, University of Michigan, 2005.

\bibitem[G2]{G} P.\ Gao, $n$-level density of the low-lying zeros of quadratic Dirichlet $L$-functions, Int.\ Math.\ Res.\ Not.\ IMRN 2014, no.\ 6, 1699--1728. 

\bibitem[IK]{IK} H.\ Iwaniec, E.\ Kowalski, \emph{Analytic number theory}, American Mathematical Society Colloquium Publications \textbf{53}, American Mathematical Society, Providence, RI, 2004.

\bibitem[KS1]{KS1} N.\ M.\ Katz, P.\ Sarnak, Zeros of zeta functions, their spacings and their spectral nature, preprint, 1997.

\bibitem[KS2]{KS2} N.\ M.\ Katz, P.\ Sarnak, Zeroes of zeta functions and symmetry, Bull.\ Amer.\ Math.\ Soc.\ (N.S.) \textbf{36} (1999), no.\ 1, 1--26.

\bibitem[KS3]{KS3} N.\ M.\ Katz, P.\ Sarnak, \emph{Random matrices, Frobenius eigenvalues, and monodromy},
American Mathematical Society Colloquium Publications \textbf{45}, American Mathematical Society, Providence, RI, 1999.

\bibitem[LM]{LM} J.\ Levinson, S.\ J.\ Miller, The $n$-level densities of low-lying zeros of quadratic Dirichlet $L$-functions, Acta Arith.\ \textbf{161} (2013), no.\ 2, 145--182.

\bibitem[Mi]{Mi} S.\ J.\ Miller, A symplectic test of the $L$-functions ratios conjecture, Int.\ Math.\ Res.\ Not.\ IMRN 2008, no.\ 3, Art.\ ID rnm146, 36 pp.

\bibitem[Mo]{Mo} H.\ L.\ Montgomery, The pair correlation of zeros of the zeta function, \emph{Analytic number theory} (Proc.\ Sympos.\ Pure Math., Vol.\ XXIV, St.\ Louis Univ., St.\ Louis, Mo., 1972), 181--193, American Mathematical Society, Providence, RI, 1973. 

\bibitem[MV]{MV} H.\ L.\ Montgomery, R.\ C.\ Vaughan, \emph{Multiplicative number theory. I. Classical theory}, Cambridge Studies in Advanced Mathematics \textbf{97}, Cambridge University Press, Cambridge, 2007. 

\bibitem[OS1]{OS1} A.\ E.\ \"Ozl\"uk, C.\ Snyder, Small zeros of quadratic $L$-functions, Bull.\ Austral.\ Math.\ Soc.\ \textbf{47} (1993), no.\ 2, 307--319.

\bibitem[OS2]{OS2} A.\ E.\ \"Ozl\"uk, C.\ Snyder, On the distribution of the nontrivial zeros of quadratic $L$-functions close to the real axis, Acta Arith.\ \textbf{ 91} (1999), no.\ 3, 209--228.

\bibitem[Rub]{Rub} M.\ Rubinstein, Low-lying zeros of $L$-functions and random matrix theory, Duke Math.\ J.\ \textbf{109} (2001), no.\ 1, 147--181.

\bibitem[Rud]{Rud} Z.\ Rudnick, Traces of high powers of the Frobenius class in the hyperelliptic ensemble, Acta Arith.\ \textbf{143} (2010), no.\ 1, 81--99. 

\bibitem[S]{So} K.\ Soundararajan, Nonvanishing of quadratic Dirichlet $L$-functions at $s=\tfrac 12$,  Ann.\ of Math.\ (2) \textbf{152} (2000), no.\ 2, 447--488.

\bibitem[SST]{SST} A.\ Shankar, A.\ S\"odergren, N.\ Templier, Low-lying zeros of certain families of Artin $L$-functions, preprint, arXiv:1507.07031.

\bibitem[Y]{Y2} M.\ P.\ Young, Low-lying zeros of families of elliptic curves, J.\ Amer.\ Math.\ Soc.\ \textbf{19} (2006), no.\ 1, 205--250.

\end{thebibliography}
\end{document}